 \newcommand{\sym}{\mathfrak{S}}
 \newcommand{\Tr}{\operatorname{Tr}}
 \newcommand{\Ind}{\operatorname{Ind}}
 \newcommand{\Res}{\operatorname{Res}}
 \newcommand{\cyc}[1]{\langle\,#1\,\rangle}
 \newcommand{\C}{\mathbb{C}}
\newcommand{\N}{\mathbb{N}}
 \newcommand{\Z}{\mathbb{Z}}
 \newcommand{\Irr}{\operatorname{Irr}}
\newcommand{\cal}[1]{\mathcal{#1}}
 \newcommand{\dis}{\displaystyle}
 \newcommand{\la}{\lambda}
 \newcommand{\da}{\delta}
  \newcommand{\sa}{\sigma}
 \newcommand{\ga}{\gamma}
 \newcommand{\e}{\varepsilon}
  \newcommand{\ula}{\underline{\lambda}}
    \newcommand{\ueta}{\underline{\eta}}
  \newcommand{\uga}{\underline{\gamma}}
  \newcommand{\uw}{\underline{w}}
 \newcommand{\umu}{\underline{\mu}}
 \newcommand{\unu}{\underline{\nu}}
 \newcommand{\uT}{\underline{T}}
  \newcommand{\End}{\operatorname{End}}
  \newcommand{\Mat}{\operatorname{Mat}}
\newtheorem{theorem}{Theorem}[section] 
\newtheorem{lemma}[theorem]{Lemma}     
\newtheorem{corollary}[theorem]{Corollary}
\newtheorem{proposition}[theorem]{Proposition}
\newtheorem{convention}[theorem]{Convention}
\newtheorem{definition}[theorem]{Definition}
\newtheorem{example}[theorem]{Example}
\theoremstyle{definition}
\newtheorem{remark}[theorem]{Remark}
\title[]
{Perfect Isometries Between Blocks of Complex Reflection Groups}
\author{Olivier Brunat}
\address{Universit\'e Paris-Diderot Paris 7\\ Institut de math\'ematiques de
         Jussieu -- Paris Rive Gauche\\ UFR de math\'e\-matiques\\ Case
7012\\ 75205 Paris Cedex 13\\
         France.}
\email{olivier.brunat@imj-prg.fr}
\author{Jean-Baptiste Gramain}
\address{Institute of Mathematics, 
University of Aberdeen, King's College \\
Fraser Noble Building, Aberdeen AB24 3UE, UK
}
\email{jbgramain@abdn.ac.uk}
\thanks{The second author was supported by the EPSRC grant
\emph{Combinatorial Representation Theory} EP/M019292/1}
\subjclass[2010]{Primary 20C30,\, 20C15; Secondary 20C20}
\begin{document}
\begin{abstract}
in this paper, we prove that, given any integers $d$, $e$, $r$ and $r'$, and a prime $p$ not dividing $de$, any two blocks of the complex reflection groups $G(de,e,r)$ and $G(de,e,r')$ with the same $p$-weight are perfectly isometric.
\end{abstract}
\maketitle

\section{Introduction}
\label{sec:intro}
In the last 30 years, a lot of research in modular representation theory
of finite groups has been fuelled by Brou\'e's Abelian Defect
Conjecture. This predicts that any $p$-block $B$ of a finite group $G$
which has abelian defect group $P$ should be derived equivalent to its
Brauer correspondent $b$ in $N_G(P)$ (see \cite{Broue}). Several
refinements of this conjecture have been formulated, which involve deep
structural correspondences, such as {\emph{splendid equivalences}} or
{\emph{Rickard equivalences}}. At the level of complex irreducible
characters, all of these conjectures predict the existence of a
{\emph{perfect isometry}} between $B$ and $b$.

The first step towards proving Brou\'e's Abelian Defect Conjecture for
the symmetric group was proved by Enguehard in \cite{Enguehard}. He
showed that, if $B$ and $B'$ are $p$-blocks of the symmetric groups
$\sym_m$ and $\sym_n$ respectively, and $B$ and $B'$ have the same
{\emph{$p$-weight}}, then $B$ and $B'$ are perfectly isometric. In this
paper, we generalize Enguehard's result to the infinite family of
complex reflection groups. More precisely, we show that, given any
integers $d$, $e$, $r$ and $r'$, and a prime $p$ not dividing $de$, any
two blocks of the complex reflection groups $G(de,e,r)$ and $G(de,e,r')$
with the same $p$-weight are perfectly isometric (see Theorem
\ref{thm:perfisom}).

The paper is organised as follows. In Section \ref{sec:Gd1r}, we
introduce some combinatorial tools we will need throughout the paper. We
then present the already existing parametrizations, due to James and
Kerber (\S \ref{sec:jameskerber}) and to Marin and Michel (\S
\ref{sec:marinmichel}), of the irreducible representations of the wreath
products $G(d,1,r)$, as well as a new parmetrization which is more
convenient for our purposes (\S \ref{sec:brgr}). In Section
\ref{sec:Gdeer}, we construct the irreducible $G(de,e,r)$-modules (\S
\ref{sec:representations}), and obtain some useful formul{\ae} for the
values of certain characters of $G(de,e,r)$ (\S \ref{sec:characters}).
The results of this part are of independent interest; in particular,
the character table of $G(de,e,r)$ is completely determined (see
Theorem~\ref{th:valcar2}, Theorem~\ref{th:classesGeder} and
Equality~(\ref{eq:chidelta})). Note that we do not follow the same
approach as that of~\cite{read}.

Section \ref{sec:perfisom} is devoted to perfect isometries and our main
result, Theorem \ref{thm:perfisom}. In \S \ref{sec:carN}, \S
\ref{sec:blocks} and \S \ref{sec:bijections}, we describe the
irreducible characters and $p$-blocks of $G(de,e,r)$, as well as
bijections between $p$-blocks with the same $p$-weight. Finally, in \S
\ref{sec:isomparf}, we introduce perfect isometries and prove our main
theorem.

\section{Irreducible representations of $G(d,1,r)$}
\label{sec:Gd1r}
Let $d$ and $r$ be positive integers. Let $\mathcal U_d$ be the group
of complex $d$th roots of unity.
Define $G=G(d,1,r)=\mathcal U_d\wr\sym_r=\mathcal U_d^r\rtimes\sym_r$.
The elements of $G$ are denoted by $(z;\sigma)$, or simply $z\sa$, 
with $z\in\mathcal U_d^r$ and $\sigma \in\sym_r$. In particular,
$\sym_r$ and $\mathcal U_d^r$ are viewed as subgroups of $G$ using the
injections $\sigma\in\sym_r \mapsto (1;\sigma)$ and $z\in\mathcal
U_d^r\mapsto (z;1)$, respectively. 
For any $(z_1,\ldots,z_r)\in \mathcal U_d^r$ and $\sigma\in\sym_r$, 
recall that 
\begin{equation}
\label{eq:conjbase}
\sigma^{-1}(z_1,\ldots,z_r)\sigma=(z_{\sigma(1)},\ldots,z_{\sigma(r)}). 
\end{equation}

Let $\zeta$ be a generator of $\mathcal U_d$.
Write
$t=(\zeta,1,\ldots,1)\in \mathcal U_d^r$ and $s_i=(i\ i+1)\in\sym_r$ for
$1\leq i\leq r-1$. In particular, $G=\langle t,\,s_1,\,\ldots,\,s_{r-1}
\rangle$.

\subsection{Tableaux}

Let $\lambda$ be a partition of $r$ and let $T$ be a tableau of shape
$\lambda$ whose entries are distinct integers. For $(u,v)\in\Z^2$, denote
by $E(T,(u,v))$ the entry of $T$ in the box in row $u$ and column $v$. 
Furthermore, we
write $E(T)$ for the set of integers occurring in $T$. Set
$k=|\lambda|$ and assume that
$E(T)=\{t_1,\,\ldots,\,t_k\}$ with $t_1<\cdots<t_k$. 
Denote by $\operatorname{ST}(\lambda;t_1,\ldots,t_k)$ the set of
standard tableaux of shape $\lambda$ with respect to
$\{t_1,\,\ldots,\,t_k\}$, that is, tableaux $T$ of shape $\lambda$ filled by
the set of integers $\{t_1,\,\ldots,\,t_k\}$ in such a way that the entries in $T$ are
increasing across the rows and the columns of $T$.

Now, for each $T \in \operatorname{ST}(\lambda;t_1,\ldots,t_k)$, define the
tableau $\theta(T)$ of shape $\lambda$ to be such that
$$E(T,(u,v))=t_j\quad\Longleftrightarrow\quad E(\theta(T),(u,v))=j.$$
Write
$\operatorname{ST}(\lambda):=\operatorname{ST}(\lambda;1,\ldots,k)$ for
the set of usual standard tableaux of shape $\lambda$. Then

\begin{lemma}
\label{lem:thetamap}
The map $\theta$ induces a bijection between
$\operatorname{ST}(\lambda;t_1,\ldots,t_k)$ and
$\operatorname{ST}(\lambda)$. 
\end{lemma}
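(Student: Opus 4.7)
The plan is to exhibit an explicit inverse to $\theta$ and check that both maps preserve the standardness condition. The key observation is that, since $t_1<\cdots<t_k$, the assignment $t_j\mapsto j$ is an order-preserving bijection between $\{t_1,\ldots,t_k\}$ and $\{1,\ldots,k\}$. Standardness of a tableau (entries increasing along rows and columns) depends only on the order of the entries, not on their actual values, so any order-preserving relabelling of the entries sends standard tableaux to standard tableaux.

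Concretely, I would first check that $\theta$ is well-defined, i.e.\ that $\theta(T)\in\operatorname{ST}(\lambda)$ whenever $T\in\operatorname{ST}(\lambda;t_1,\ldots,t_k)$. If $(u,v)$ and $(u,v+1)$ are adjacent boxes in the same row with $E(T,(u,v))=t_i$ and $E(T,(u,v+1))=t_j$, then $t_i<t_j$ by the standardness of $T$, hence $i<j$; but then $E(\theta(T),(u,v))=i<j=E(\theta(T),(u,v+1))$, as required. The same argument applies to columns. This shows $\theta$ is a well-defined map $\operatorname{ST}(\lambda;t_1,\ldots,t_k)\to\operatorname{ST}(\lambda)$.

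Next I would define the candidate inverse $\theta'\colon\operatorname{ST}(\lambda)\to\operatorname{ST}(\lambda;t_1,\ldots,t_k)$ by the rule $E(T',(u,v))=j\iff E(\theta'(T'),(u,v))=t_j$. By exactly the same order-preserving argument (using that $j<\ell$ implies $t_j<t_\ell$), the tableau $\theta'(T')$ is standard with entries $\{t_1,\ldots,t_k\}$. Finally, the identities $\theta'\circ\theta=\mathrm{id}$ and $\theta\circ\theta'=\mathrm{id}$ are immediate from the defining biconditional, because the correspondence $t_j\leftrightarrow j$ is a bijection.

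There is no real obstacle here; the lemma is essentially a tautology encoding the fact that the notion of standard tableau is invariant under order-preserving relabellings of the filling alphabet. The only thing to be careful about is to phrase the verification symmetrically in both directions so that well-definedness of $\theta'$ is not taken for granted.
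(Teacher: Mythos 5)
Your proof is correct; the paper in fact states this lemma without proof, treating it as immediate, and your argument — that $\theta$ is induced by the order-preserving bijection $t_j\mapsto j$ and that standardness depends only on the relative order of the entries — is exactly the intended justification.
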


\subsection{Coset representatives for Young subgroups}

Let $r$ be a positive integer. A \emph{composition} of $r$ of length $d$
is a $d$-tuple $(c_0,\ldots,c_{d-1})$ of non-negative integers such that
$\sum_{i=0}^{d-1} c_i=r$. Let $c=(c_0,\ldots,c_{d-1})$ be a composition of $r$.
Write $I_c$ for the set of integers $0\leq i\leq d-1$ such that $c_i\neq 0$.
We set $C_0=0$ and $C_i=c_0+\cdots+c_{i-1}$ for any
$i\in I_c$, and
$E_i=\{C_i+1,\ldots,C_i+c_i\}$. 
Now, we can associate to $c$ the \emph{Young subgroup}
$\sym_c=\sym_{E_{i_0}}\times\cdots\times\sym_{E_{i_s}}$ of $\sym_r$, where $I_c=\{i_0, \ldots ,i_s\}$. 
Furthermore, for any $i\in I_c$, we denote by
$p_i:\sym_{E_i}\longrightarrow\sym_{c_i}$ the group isomorphism induced by
the bijection $E_i\rightarrow \{1,\ldots,c_i\},\,C_i+j\mapsto j$.
\medskip

Let $E=\{1,\ldots,r\}$.
For any composition $c=(c_0,\ldots,c_{d-1})$ of $r$, define 
\begin{equation}
\label{eq:rep}
\mathcal X_c=\left\{(X_0,\ldots,X_{d-1})\,|\,\bigsqcup_{i=0}^{d-1}
X_i=E,\,|X_i|=c_i\right\}. 
\end{equation}

\begin{lemma}
\label{lem:rep}
Let $c=(c_0,\ldots,c_{d-1})$ be a composition of $r$. For 
$X=(X_0,\ldots,X_{d-1})\in
\mathcal X_c$, 
define $t_X\in\sym_r$ by setting, for all $i\in I_c$ and $1\leq j\leq c_i$,
\begin{equation}
t_X(C_i+j)=x_{i,j}, 
\end{equation}
where $X_i=\{x_{i,1},\ldots,x_{i,c_i}\}$
with $x_{i,1}<\cdots<x_{i,c_i}$.  

Let $\sigma\in \sym_r$. For $i\in I_c$, define 
$X_i(\sigma)= \{\sigma(x)\,|\,x\in
E_i\}=\{x_{i,1},\ldots,x_{i,c_i}\}$ with $x_{i,1}<\cdots<x_{i,c_i}$, and 
write $X=(X_0(\sigma),\ldots,X_{d-1}(\sigma))\in \mathcal X_c$. Then 
$$\sigma=t_X\widetilde{\sigma}_0\cdots\widetilde{\sigma}_{d-1},$$
where $\widetilde{\sigma}_i\in\sym_{E_i}$ is defined as follows. 
If $i\notin I_c$, then $\widetilde\sigma_i=1$. Otherwise,
for any $x\in E_i$, 
there 
is a unique $m_x\in\{1,\ldots,c_i\}$ such that $\sigma(x)=x_{i,m_x}$, and we set
$\widetilde\sigma_i(x)=C_i+m_x$. For $x\notin E_i$, we set
$\widetilde{\sigma}_i(x)=x$. 
In particular, $\mathcal T_c=\{t_X\,|\,X\in \mathcal X_c\}$ is a complete set of representatives for
$\sym_r/\sym_c$.
\end{lemma}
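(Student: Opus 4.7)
The plan is to verify the asserted factorisation by a direct evaluation, deduce coverage of all left cosets of $\sym_c$ in $\sym_r$, and close with a cardinality count.

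First I would fix $\sigma\in\sym_r$, form $X=(X_0(\sigma),\ldots,X_{d-1}(\sigma))\in\mathcal{X}_c$ and the $\widetilde{\sigma}_i$'s as prescribed, and check that $\sigma$ and $t_X\widetilde{\sigma}_0\cdots\widetilde{\sigma}_{d-1}$ coincide on every $x\in\{1,\ldots,r\}$. If $x\in E_i$ with $i\in I_c$, then each $\widetilde{\sigma}_j$ with $j\neq i$ fixes $x$ because it acts trivially outside $E_j$, so the product acts on $x$ as $\widetilde{\sigma}_i$ alone, sending $x$ to $C_i+m_x$, where $m_x$ is the unique index with $\sigma(x)=x_{i,m_x}$; this $m_x$ exists because $\sigma(x)\in\sigma(E_i)=X_i(\sigma)=X_i$. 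Applying $t_X$ then yields $t_X(C_i+m_x)=x_{i,m_x}=\sigma(x)$, as required. If $i\notin I_c$ then $E_i$ is empty and there is nothing to verify. Since the $\widetilde{\sigma}_i$'s have pairwise disjoint supports, the order in which they are multiplied is immaterial.

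For the final assertion, observe that $\widetilde{\sigma}_0\cdots\widetilde{\sigma}_{d-1}\in\sym_{E_{i_0}}\times\cdots\times\sym_{E_{i_s}}=\sym_c$, whence $\sigma\in t_X\sym_c$, which shows that $\mathcal{T}_c$ meets every left coset of $\sym_c$ in $\sym_r$. A direct count of ordered set partitions gives
$$|\mathcal{X}_c|=\binom{r}{c_0,\ldots,c_{d-1}}=\frac{r!}{\prod_{i=0}^{d-1} c_i!}=[\sym_r:\sym_c],$$
so the surjective map $X\mapsto t_X\sym_c$ is a bijection and $\mathcal{T}_c$ is a complete set of representatives. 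I do not expect any substantive obstacle: the content of the lemma is essentially bookkeeping, and the main care must go into arranging the indexing conventions for $t_X$ and the $\widetilde{\sigma}_i$'s so that the two sides of the factorisation match block by block on each $E_i$.
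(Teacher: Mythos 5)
Your proof is correct and follows essentially the same route as the paper: a pointwise verification that $t_X\widetilde{\sigma}_0\cdots\widetilde{\sigma}_{d-1}$ agrees with $\sigma$ on each $E_i$, using that the $\widetilde{\sigma}_j$ have disjoint supports. The only (harmless) variation is at the end, where you conclude completeness of $\mathcal{T}_c$ by counting $|\mathcal{X}_c|=[\sym_r:\sym_c]$ rather than by observing, as the paper does, that $t_X$ and the $\widetilde{\sigma}_i$ are uniquely determined by $\sigma$.
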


\begin{proof}
Let $1\leq x\leq r$. Since $X\in \mathcal X_c$, there is $i\in I_c$ such that
$x\in X_{i}(\sigma)$, and
$\widetilde\sigma_{i+1}\cdots\widetilde\sigma_{d-1}(x)=x$. Furthermore,
 $\widetilde\sigma_i(x)=C_i+m_x\in X_i(\sigma)$, and it follows that
$\widetilde\sigma_1\cdots\widetilde\sigma_{i-1}(C_i+m_x)=C_i+m_x$.
Finally,
$t_X(C_i+m_x)=x_{i,m_x}=\sigma(x)$, as required. 
By construction, $t_X$ and
$\widetilde\sigma_1,\ldots,\widetilde\sigma_{d-1}$ are uniquely
determined from $\sigma$, hence $\mathcal T_c$ is a complete set of
representatives for $\sym_r/\sym_c$.
\end{proof}

For any composition $c=(c_0,\ldots,c_{d-1})$ of $r$, we set 
$$G_c=\mathcal U_d^r\rtimes \sym_c.$$ 
Write $\pi_c:G\rightarrow \sym_r$ for the natural projection with kernel
$\mathcal U_d^r$. Note that $G_c=\pi^{-1}(\sym_c)$, and that
$\pi(G_c)=\sym_c$, hence $G/G_c $ is in bijection with $\sym_r/\sym_c$. The bijection
is given by $tG_c\mapsto \pi(t)\sym_c$.
Identifying $\sym_r$ to a subgroup of $G$ as above, we can take $\mathcal T_c$ 
for a set of representatives for
$G/G_c$. Furthermore, using Lemma~\ref{lem:rep} and
Relation~(\ref{eq:conjbase}), we deduce that,
if $g=z\sigma\in G$ with $z=(z_1,\ldots,z_r)\in\mathcal U_d^r$ and 
$\sigma\in\sym_r$, then
\begin{equation}
\label{eq:repG}
g=t_X\underbrace{(z_{t_X(1)},\ldots,z_{t_X(r)})
\widetilde\sigma_0\cdots\widetilde\sigma_{d-1}}_{\in
G_c},
\end{equation}
where $X=(X_{0}(\sigma),\ldots,X_{d-1}(\sigma))$ and
$\widetilde\sigma_0,\ldots,\widetilde\sigma_{d-1}$ are as in
Lemma~\ref{lem:rep}.

\subsection{The James-Kerber parametrization}
\label{sec:jameskerber}

For any partition $\lambda$ of $r$, there is a corresponding irreducible
Specht module $V_{\lambda}$ of $\sym_r$. Write
$\psi_{\lambda}:\sym_r\rightarrow \operatorname{GL}(V_{\lambda})$ for
the corresponding irreducible representation of $\sym_r$.
Recall that $V_{\lambda}$ has a $\C$-basis
$v_{\lambda}=\{v_{\lambda,T}\,|\,T\in\operatorname{ST}(\lambda)\}$
such that
\begin{equation}
\label{eq:actionsn}
\psi_{\lambda}(s_i)(v_{\lambda,T})=\frac{1}{a(i,i+1)}
v_{\lambda,T}+(1+\frac{1}{a(i,i+1)})v_{\lambda, T_{i\leftrightarrow
i+1}},
\end{equation}
where $a(i,i+1)$ denotes the distance between the diagonals of
$T$ where $i$ and $i+1$ occur, and $T_{i\leftrightarrow
i+1}$ is the standard tableau of shape $\lambda$ obtained
by exchanging the integers $i$ and $i+1$ in $T$.

Let $\alpha$ denote the identity of $\mathcal U_d$. We can write $\Irr(\mathcal U_d)=\{\alpha^i\,|\,0\leq i\leq d-1\}$.
A $d$-multipartition $\underline
\lambda=(\lambda^{(0)},\,\lambda^{(1)},\ldots,\,\lambda^{(d-1)})$ of $r$ is a $d$-tuple of partitions
such that $\sum_{i=0}^{d-1}|\lambda^{(i)}|=r$. We write this as $\ula \Vdash_d r$, and denote by
$\mathcal{MP}_{r,d}$ the set of $d$-multipartitions of $r$.
Recall that, up to $G$-isomorphism, the
irreducible representations of $G$ are parametrized by
$\mathcal{MP}_{r,d}$ as follows.

For any $\underline
\lambda=(\lambda^{(0)},\,\lambda^{(1)},\ldots,\,\lambda^{(d-1)})$, write
$c_i=|\lambda^{(i)}|$ for $0\leq i\leq d-1$, and
$c=(c_0,\ldots,c_{d-1})$. Now, consider the
irreducible character of $\mathcal U_d^r$
\begin{equation}
\label{eq:chargroupebase}
\alpha_c=\bigotimes_{i=0}^{d-1}\underbrace{\alpha^i\otimes\cdots\otimes\alpha^i}_{c_i\
\text{times}},
\end{equation}
whose inertial subgroup in $G$ is $G_{c}$. Extend $\alpha_c$ to $G_c$ by
setting $\alpha_c(z\sigma)=\alpha_c(z)$ for all $z\in \mathcal U_d^r$ and
$\sigma\in \sym_c$, and denote by $\C w_c$ the corresponding
representation space. Now, for any $i \in I_c$, the space 
$V_{\lambda^{(i)}}$ has a structure of $\sym_{E_i}$-module given by 
the homomorphism $\psi_{\lambda^{(i)}}\circ p_i:\sym_{E_i}\rightarrow
\operatorname{GL}(V_{\lambda^{(i)}})$.
Hence, $V_{\underline\lambda}=V_{\lambda^{(0)}}\otimes\cdots\otimes
V_{\lambda^{(d-1)}}$ is an irreducible $\sym_c$-module, which gives
an irreducible representation of $G_c$ through $\pi_c$. 
Furthermore, to simplify the notation, we identify $\C w_c\otimes
V_{\underline\lambda}$ with $V_{\underline\lambda}$, by setting $z
v=\alpha_c(z)v$ for all $z\in\mathcal U_d^r$ and $v\in
V_{\underline\lambda}$. Now, by Clifford theory, the $G$-module
\begin{equation}
\label{eq:defrepgd1}
W_{\underline \lambda}=\Ind_{G_c}^{G}(V_{\underline\lambda})
\end{equation}
is irreducible, and $\{W_{\underline
\lambda}\,|\,\underline\lambda\in\mathcal{MP}_{r,d}\}$ is a complete set of non-isomorphic irreducible $G$-modules. For any
$\underline \lambda\in\mathcal{MP}_{r,d}$, write
$\vartheta_{\underline\lambda}:G_c\rightarrow
\operatorname{GL(V_{\underline\lambda})}$
and 
$\rho_{\underline\lambda}:G\rightarrow
\operatorname{GL}(W_{\underline\lambda})$
for the corresponding representation of $G_c$ and $G$, respectively.

By definition of the induction representation, the set
$$\mathfrak b_{\underline\lambda}=\left\{t_X\otimes
v_{\lambda^{(0)},T_0}\otimes\cdots\otimes v_{\lambda^{(d-1)},T_{d-1}}\,|\,
X\in \mathcal X_c,\, T_i\in\operatorname{ST}(\lambda^{(i)})\ \text{for }0\leq
i\leq d-1\right \}$$ is a
$\C$-basis of $W_{\underline\lambda}$. Furthermore, for
$z=(z_1,\ldots,z_r)\in\mathcal U_d^r$, $\sigma\in\sym_r$, and 
$t_X\in\mathcal T_c$, there are $t_{X_{\sigma}}\in\mathcal T_c$ and
$\widetilde{\sigma}_0\in\sym_{c_0},\,\ldots,\,\widetilde{\sigma}_{d-1}
\in\sym_{d-1}$
such that 
\begin{equation}
\label{eq:decrepind}
z\sigma
t_X=t_{X_{\sigma}}(z_{t_{X_{\sigma}}(1)},\ldots,z_{t_{X_{\sigma}}(r)})
\widetilde{\sigma}_0\cdots\widetilde{\sigma}_{d-1}
\in\sym_{c}
\end{equation}
(see Relation~(\ref{eq:repG}) applied to $g=z\sigma t_X\in
G$). Therefore, if $v= t_X\otimes
v_{\lambda^{(0)},T_0}\otimes\cdots\otimes v_{\lambda^{(d-1)},T_{d-1}}$, then
\begin{equation}
\label{eq:formuleactioninduite}
\rho_{\underline\lambda}(z\sigma)(v)=
\alpha_c(z_{t_{X_\sigma}(1)},\ldots,z_{t_{X_\sigma}(r)})
t_{X_\sigma}\otimes \widetilde\sigma_0(
v_{\lambda^{(0)},T_0})\otimes\cdots\otimes
\widetilde{\sigma}_{d-1}(v_{\lambda^{(d-1)},T_{d-1}}),
\end{equation}
where
$\widetilde\sigma_i(v_{\lambda^{(i)},T_i})=\psi_{\lambda^{(i)}}\circ p_i (\widetilde\sigma_i)(v_{\lambda^{(i)},T_i})$
for all $0\leq i\leq d-1$.

\subsection{The Marin-Michel parametrization}
\label{sec:marinmichel}

In~\cite[\S2.3]{Marin-Michel}, Marin and Michel give the following model
for $\Irr(G)$. Let
$\underline\lambda=(\lambda^{(0)},\ldots,\lambda^{(d-1)})\in\mathcal{MP}_{r,d}$.
Define $\mathcal T(\underline\lambda)$ to be the set of standard
multi-tableaux of shape $\underline\lambda$, that is, the set of tuples
of tableaux $\underline T=(T_0,\ldots,T_{d-1})$ where 
\begin{itemize}
\item For all $0\leq i\leq d-1$, the tableau $T_i$ is of
shape $\lambda^{(i)}$.
\item The tableaux $T_0,\ldots,T_{d-1}$ are filled by the set of integers
$\{1,\ldots,r\}$ in such a way that each integer appears exactly once in
one of the tableaux, and, for each $i$, the integers appearing in $T_i$ are increasing across the rows and
columns of $T_i$.  
\end{itemize}

Now, the $\C$-vector space $W'_{\underline\lambda}$ with
basis $\mathcal T(\underline\lambda)$ can be given a $G$-module structure
so that
$\{W'_{\underline\lambda}\,|\,\underline\lambda\in\mathcal{MP}_{r,d}\}$ is a
complete set of non-isomorphic irreducible $G$-modules. Write
$\rho'_{\underline\lambda}:G\rightarrow\operatorname{GL}(W'_{\lambda})$
for the corresponding irreducible representation of $G$. 

Denote by $\underline T(1)$ 
the index of the tableau of $\underline T$ containing the integer $1$, and
for $1\leq i\leq r-1$, write $\underline T_{i\leftrightarrow
i+1}\in\mathcal T(\underline\lambda)$ for
the multi-tableau obtained from $\underline T$ by exchanging the integers
$i$ and $i+1$ in $\underline T$.\medskip

With this notation, we have (see \cite[\S2.3]{Marin-Michel}) $\rho'_{\underline\lambda}(t)(\underline
T)=\zeta^{\underline T(1)}\underline T$. Furthermore, for $1\leq i\leq r-1$, 
if $i$ and $i+1$ do not belong to 
the same tableau of $\underline T$, then 
$\rho'_{\underline\lambda}(s_i)(\underline
T)=\underline T_{i\leftrightarrow
i+1}$. Otherwise, 
\begin{equation}
\label{eq:formuleIvanJean}
\rho'_{\underline\lambda}(s_i)\underline T=\frac{1}{a(i,i+1)}\underline T+\left(1+\frac{1}{a(i,i+1)}\right)\underline T_{i\leftrightarrow
i+1}.
\end{equation}
\medskip

\begin{proposition}
\label{prop:isoKerMar}
Let
$\underline\lambda=(\lambda^{(0)},\,\ldots,\lambda^{(d-1)})
\in\mathcal{MP}_{r,d}$. Then 
the linear map $f_{\ula}:W'_{\underline\lambda}\rightarrow
W_{\underline\lambda}$ defined on the basis $\{\underline
T\,|\,T\in\mathcal T(\underline\lambda)\}$ of $W'_{\underline\lambda}$ by
setting, for every $\underline T=(T_0,\,\ldots,\,T_{d-1})\in\mathcal
T(\underline\lambda)$,
$$f_{\ula}(T_0,\ldots,T_{d-1})=t_X\otimes
v_{\lambda^{(0)},\theta(T_0)}\otimes\cdots\otimes
v_{\lambda^{(d-1)},\theta(T_{d-1})},$$
where $X=(E(T_0),\,\ldots,\,E(T_{d-1}))$ and $\theta$ is the map
constructed before Lemma~\ref{lem:thetamap}, is an isomorphism of
$G$-modules.
\end{proposition}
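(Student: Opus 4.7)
My plan is to show first that $f_{\ula}$ is a linear bijection, and then to check that it intertwines the actions of a generating set of~$G$.

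For the bijection: giving a standard multi-tableau $\uT\in\mathcal T(\ula)$ is the same as giving the tuple $X=(E(T_0),\ldots,E(T_{d-1}))\in\mathcal X_c$ of entry sets together with, for each~$i$, a standard tableau of shape $\lambda^{(i)}$ on the entries $E(T_i)$; by Lemma~\ref{lem:thetamap}, the latter corresponds canonically to an element of $\operatorname{ST}(\lambda^{(i)})$ via~$\theta$. Hence $f_{\ula}$ sends the basis $\mathcal T(\ula)$ of $W'_{\ula}$ bijectively to the basis $\mathfrak b_{\ula}$ of $W_{\ula}$, and so is a linear isomorphism.

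Since $G=\langle t,s_1,\ldots,s_{r-1}\rangle$, for $G$-equivariance it is enough to verify $\rho_{\ula}(g)\circ f_{\ula}=f_{\ula}\circ \rho'_{\ula}(g)$ for $g=t$ and for each $g=s_i$. For $g=t$, apply~(\ref{eq:formuleactioninduite}) with $z=t$ and $\sigma=1$: one has $X_\sigma=X$, every $\widetilde\sigma_j=1$, and the scalar $\alpha_c(z_{t_X(1)},\ldots,z_{t_X(r)})$ reduces to $\zeta^{\uT(1)}$, because $t_X^{-1}(1)=C_{\uT(1)}+1$ (as $1$ is the smallest element of $E(T_{\uT(1)})$) while $\alpha_c$ acts on that coordinate by $\alpha^{\uT(1)}$. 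This matches $\rho'_{\ula}(t)(\uT)=\zeta^{\uT(1)}\uT$.

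For $g=s_i$ there are two cases. If $i$ and $i+1$ lie in the same tableau $T_k$, write $E(T_k)=\{a_1<\cdots<a_{c_k}\}$ with $i=a_p$ and $i+1=a_{p+1}$. Then $X_{s_i}=X$, the factors $\widetilde{(s_i)}_j$ are trivial for $j\neq k$, and $\widetilde{(s_i)}_k$ corresponds under $p_k$ to $(p,p+1)\in\sym_{c_k}$. Using~(\ref{eq:actionsn}) together with the identities that the distance between the diagonals of $p$ and $p+1$ in $\theta(T_k)$ equals $a(i,i+1)$ in $T_k$, and $\theta(T_k)_{p\leftrightarrow p+1}=\theta((T_k)_{i\leftrightarrow i+1})$, one recovers $f_{\ula}$ applied to the right-hand side of~(\ref{eq:formuleIvanJean}). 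If instead $i\in E(T_k)$ and $i+1\in E(T_l)$ with $k\neq l$, then $i+1\notin E(T_k)$ forces the next element of $E(T_k)$ after $i$ to be at least $i+2$, so that $i+1$ occupies the same rank in the sorted version of $(E(T_k)\setminus\{i\})\cup\{i+1\}$ as $i$ did in $E(T_k)$; a symmetric statement holds for $E(T_l)$. Consequently each $\widetilde{(s_i)}_j=1$, the tuple $X_{s_i}$ coincides with the tuple of entry sets of $\uT_{i\leftrightarrow i+1}$, and $\theta(T'_j)=\theta(T_j)$ for every~$j$ (where $T'_j$ is the $j$-th component of $\uT_{i\leftrightarrow i+1}$); we conclude $\rho_{\ula}(s_i)(f_{\ula}(\uT))=f_{\ula}(\uT_{i\leftrightarrow i+1})=f_{\ula}(\rho'_{\ula}(s_i)\uT)$.

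The main bookkeeping obstacle is in this last case, where one must verify that $s_i$ merely re-sorts entries between $E(T_k)$ and $E(T_l)$ \emph{without} changing any ranks, so that no non-trivial $\widetilde{(s_i)}_j$ and no unexpected scalar appears. Once this is checked, the theorem follows from the generation of $G$ by $t$ and the~$s_i$.
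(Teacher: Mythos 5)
Your proposal is correct and takes essentially the same route as the paper: establish bijectivity by matching bases, then check intertwining on the generators $t,s_1,\ldots,s_{r-1}$, splitting the $s_i$-case into the same-tableau and different-tableau subcases and using the decomposition of $z\sigma t_X$ from Relation~(\ref{eq:repG})/(\ref{eq:decrepind}). The only cosmetic difference is that you pin down $t_X^{-1}(1)=C_{\uT(1)}+1$ exactly, whereas the paper only needs $t_X^{-1}(1)\in E_{\uT(1)}$; both give the scalar $\zeta^{\uT(1)}$.
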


\begin{proof}
First, we remark that $f_{\ula}$ sends a basis to a basis, whence is a bijective
linear map. 
To prove the result, it suffices to show that
$\rho_{\underline\lambda}(g)\circ
f_{\ula}=f_{\ula}\circ\rho'_{\underline\lambda}(g)$ for all
$g\in\{t,s_1,\,\ldots,\, s_{r-1}\}$. 

Let $\underline T=(T_0,\,\ldots,\,T_{d-1})\in \mathcal
T(\underline\lambda)$. Write $c_i=|\lambda^{(i)}|$ for all $0\leq i\leq
d-1$, and set $c=(c_0,\,\ldots,\,c_{d-1})$. Define $t'=
(1,\ldots,1,\zeta,\,1,\ldots,1)$,  where $\zeta$ lies in
position $t_X^{-1}(1)$.
Then Relation~(\ref{eq:repG}) gives
$t t_X=t_X t'$. Furthermore, $1\in
E(T_{\underline{T}(1)})$, thus $t_X^{-1}(1)\in E_{\underline T(1)}$. 
It follows from the linearity of $f_{\ula}$, and from Relations~(\ref{eq:chargroupebase}) and (\ref{eq:formuleactioninduite}),
that
\begin{eqnarray*}
\rho_{\underline\lambda}(t)(f_{\ula}(\underline T))&=& 
\rho_{\underline\lambda}(t)(t_X\otimes
v_{\lambda^{(0)},\theta(T_0)}\otimes\cdots\otimes
v_{\lambda^{(d-1)},\theta(T_{d-1})})\\&=&
\alpha_c(t')t_X\otimes
v_{\lambda^{(0)},\theta(T_0)}\otimes\cdots\otimes
v_{\lambda^{(d-1)},\theta(T_{d-1})}\\
&=&\alpha^{\underline T(1)}(\zeta)t_X\otimes
v_{\lambda^{(0)},\theta(T_0)}\otimes\cdots\otimes
v_{\lambda^{(d-1)},\theta(T_{d-1})}\\
&=&\zeta^{\underline T(1)}f_{\ula}(\underline T)\\
&=&f_{\ula}(\zeta^{\underline T(1)}\underline T)\\
&=&f_{\ula}(\rho'_{\underline\lambda}(t)(\underline T)).\\
\end{eqnarray*}
Now, let $1\leq i\leq r-1$. 
Assume $i$ and $i+1$ do not lie in the
same tableau of $\underline T$, say $i\in E(T_k)$ and $i+1\in E(T_{\ell})$. Then
$s_iT_X=T_{X_{i\leftrightarrow
i+1}}$, where $X_{i\leftrightarrow
i+1}\in \mathcal X_c$ is obtained from $X$ by exchanging $i$ and $i+1$. It
follows from Relation~(\ref{eq:formuleactioninduite}) that
$$ \rho_{\underline\lambda}(s_i)(f_{\ula}(\underline T))= 
t_{X_{i\leftrightarrow
i+1}}\otimes
v_{\lambda^{(0)},\theta(T_0)}\otimes\cdots\otimes
v_{\lambda^{(d-1)},\theta(T_{d-1})}=f_{\ula}(T_{i\leftrightarrow
i+1})=f_{\ula}(\rho'_{\underline\lambda}(s_i)(\underline
T)).$$ 
Assume now that $i$ and $i+1$ lie in the same tableau of $\underline T$,
say $i,\,i+1\in E(T_k)=\{t_1,\ldots,t_m\}$ with $t_1<\cdots< t_m$. Let
$1\leq i'\leq m$ be such that $i=t_{i'}$. Necessarily, we have $t_{i'+1}=i+1$, and 
$s_i t_X= t_X s_{C_k+i'}$.
Thus, Relations~(\ref{eq:formuleactioninduite}) and~(\ref{eq:actionsn}) 
give
\begin{eqnarray*}
\rho_{\underline\lambda}(s_i)(f_{\ula}(\underline T))&=& 
t_X\otimes
v_{\lambda^{(0)},\theta(T_0)}\otimes\cdots\otimes\psi_{\lambda^{(k)}}(s_{i'})v_{\lambda^{(k)},\theta(T_k)}\otimes\cdots\otimes
v_{\lambda^{(d-1)},\theta(T_{d-1})}\\
&=&\frac{1}{a(i',i'+1)}f_{\ula}(\underline
T)+\left(1+\frac{1}{a(i',i'+1)}\right)f_{\ula}(\underline T_{i\leftrightarrow
i+1}).
\end{eqnarray*}
Let $(u,v)$ and $(u',v')$ be such that
$E(T_k,(u,v))=i$ and $E(T_k,(u',v'))=i+1$. Then by construction, we have 
$E(\theta(T_k),(u,v))=i'$ and $E(\theta(T_k),(u',v'))=i'+1$. In
particular, $a(i,i+1)=a(i',i'+1)$ and we deduce from the linearity of $f_{\ula}$
and 
Relation~(\ref{eq:formuleIvanJean}) that
\begin{eqnarray*}
\rho_{\underline\lambda}(s_i)(f_{\ula}(\underline T))&=& 
f_{\ula}\left(\frac{1}{a(i,i+1)}\underline
T+\left(1+\frac{1}{a(i,i+1)}\right)\underline T_{i\leftrightarrow
i+1}\right)\\
&=&f_{\ula}\left(\rho'_{\underline\lambda}(s_i)(\underline T)\right ),
\end{eqnarray*}
as required.
\end{proof}
\subsection{Other descriptions in some special cases}
\label{sec:brgr}

In this section, we assume that there are integers $q$, $r'$ and $d'$
such that $d=qd'$ and $r=qr'$, and we consider multi-partitions
$\underline\lambda\in\mathcal{MP}_{r,d}$ of the form 
\begin{equation}
\label{eq:multisymetrique}
\underline\lambda=(\lambda^{(0)},\ldots,\lambda^{(d'-1)},\lambda^{(0)},\ldots ,\lambda^{(d'-1)},\ldots, \lambda^{(0)},\ldots ,\lambda^{(d'-1)})=
(\underbrace{\underline\mu,\,\ldots,\,\underline\mu}_{q\
\text{times}}),
\end{equation}
where
$\underline\mu=(\lambda^{(0)},\ldots,\lambda^{(d'-1)})\in\mathcal{MP}_{r',d'}$.
Write $c=(c_0,\ldots,c_{d-1})$ and $E_0,\,\ldots,\,E_{d-1}$ as above, 
and $c'=(c_0,\ldots,c_{d'-1})$.

Let $0\leq i\leq q-1$. We set
$L_i=\sym_{E_{i}}\times\cdots\times\sym_{E_{i+d'-1}}$, 
$K_i= \mathcal U_{d}^{r'}\rtimes L_i$, 
$$E'_i=\bigsqcup_{k=0}^{d'-1}E_{i+k},$$
and $H_{i}=\mathcal U_{d}^{r'}\rtimes \sym_{E'_i}$. Note that the character
$\alpha_{r'}^i=\alpha^i\otimes\cdots\otimes\alpha^i\in\Irr(\mathcal
U_{d}^{r'})$ extends to $H_{i}$. Recall that $V_{\underline\mu}$ is an
$L_i$-module. We endow $V_{\underline\mu}$ with a structure of
$K_i$-module where the action of $\mathcal
U_{d}^{r'}\leq K_i$ is given by $\alpha_{r'}^{d'i}\otimes\alpha_{c'}$, and we
denote by $V_{\underline\mu,i}$ the resulting $K_i$-module.
Now set
$W_{\underline\mu,i}=\Ind_{K_i}^{H_i}(V_{\underline\mu,i})$, 
and define $X_{\underline\mu,i}$ to be the subset
of elements $Y=(Y_0,\,\cdots,\,Y_{d-1})\in \mathcal X_c$ such that for all
$0\leq j\leq q-1$ and $0\leq k\leq d'-1$, if $j\neq i$, then 
$Y_{jd'+k}=\{jr'+C_k+1,\ldots,jr'+C_{k+1}\}$. In particular, $|Y_{id'+k}|=c_k$
and $\bigsqcup_k Y_{id'+k}=\{ir'+1,\,\ldots,\,(i+1)r'\}$.
Therefore, $\{t_{Y_i}\,|\,Y_i\in X_{\underline\mu,i}\}$ is a system of
coset representatives of
$H_i/K_i$. We also consider the set 
$\mathcal T'$ of tuples
$T'=(T_0,\ldots,T_{d'-1})$ with
$T_j\in\operatorname{ST}(\lambda^{(j)})$ for $0\leq j\leq d'-1$. For
$T'\in\mathcal T'$, write $v_{T'}=v_{\lambda^{(0)},T_0}\otimes\cdots\otimes
v_{\lambda^{(d'-1)},T_{d'-1}}$. In particular, $\{v_{T'}\, | \, T'\in\mathcal
T' \}$ is a basis of $V_{\underline\mu,i}$. Hence, 
$\{t_{Y_i}\otimes v_{T'_i}\,|\, Y_i\in
X_{\underline\mu,i},\,T'_i\in\mathcal T'\}$ is a basis of
$W_{\underline\mu,i}$.
\smallskip 

%

 
Now, set
\begin{equation}
\label{eq:defH}
H=H_{0}\times\cdots\times H_{q-1}.
\end{equation}
%
Consider 
the $H$-module
\begin{equation}
\label{eq:defUmu}
U_{\underline\mu}=W_{\underline\mu}\otimes
W_{\underline\mu,1}\otimes\cdots\otimes W_{\underline\mu,q-1},
\end{equation}
and define
$$W''_{\lambda}=\Ind_{H}^G(U_{\underline\mu}).$$
We write
$\rho''_{\underline\lambda}:G\rightarrow
\operatorname{GL}(W''_{\underline\lambda})$
for the corresponding representation of $G$.
\smallskip

\begin{proposition}
\label{prop:isopordre}
The $G$-module $W_{\lambda}''$ has basis
$$\mathfrak b''_{\underline\lambda}=\{t_{X'}\otimes(t_{Y_0}\otimes v_{T'_0})\otimes\cdots\otimes (t_{Y_{q-1}}\otimes
v_{T'_{q-1}})\,|\,X'\in \mathcal X_{(r',\ldots,r')},\,
Y_i\in X_{\underline\mu,i},\,T'_i\in\mathcal T'\}.$$
%
For $X\in \mathcal X_c$, define $l(X)=(X'_0,\,\ldots,\,X'_{q-1})\in
\mathcal X_{(r',\ldots,r')}$, where
$$X'_i=\bigsqcup_{k=0}^{d'-1}X_{id'+k},\quad \text{for all }\ 0\leq i\leq q-1.$$

Write $X'_i=\{x'_{i,1},\,\ldots,\,x'_{i,r'}\}$ with
$x'_{i,1}<\cdots<x'_{i,r'}$, and, for all $0\leq i\leq q-1$ and
$0\leq k\leq d'-1$, consider the element $Y_i(X)\in X_{\underline\mu,i}$ 
such that
$Y_{i,k}(X)=\{id'+j\,|\, x'_{i,j}\in X_{id'+k}\}$.
Then the linear map $f_{\ula}'$ defined by
$$f_{\ula}'(t_{X}\otimes v_{T'_0}\otimes\cdots\otimes
v_{T'_{q-1}})=t_{l(X)}\otimes (t_{Y_0(X)}\otimes
v_{T'_0})\otimes\cdots \otimes (t_{Y_{q-1}(X)}\otimes
v_{T'_{d-1}})$$
is an isomorphism of $G$-modules between $W_{\underline\lambda}$ and
$W''_{\underline\lambda}$.
\end{proposition}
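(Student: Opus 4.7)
The proof has three components: verifying that $\mathfrak b''_{\underline\lambda}$ is indeed a basis, checking that $f'_{\underline\lambda}$ is a linear bijection, and establishing $G$-equivariance. The first two are combinatorial bookkeeping; the third is where the real work lies.

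For the basis claim, Lemma~\ref{lem:rep} applied to the composition $(r',\ldots,r')$ of $r$ supplies $\mathcal T_{(r',\ldots,r')}$ as a set of coset representatives for $G/H$, and the same lemma applied inside each $H_i$ to the composition $c'$ of $r'$ yields $\{t_{Y_i}\mid Y_i\in X_{\underline\mu,i}\}$ as representatives for $H_i/K_i$. Combined with the basis $\{v_{T'_i}\}$ of $V_{\underline\mu,i}$ through the standard basis formula for an induced module, this gives exactly $\mathfrak b''_{\underline\lambda}$. The bijectivity of $f'_{\underline\lambda}$ then reduces to checking that $X\mapsto(l(X),Y_0(X),\ldots,Y_{q-1}(X))$ is a bijection from $\mathcal X_c$ onto $\mathcal X_{(r',\ldots,r')}\times\prod_i X_{\underline\mu,i}$, with the obvious inverse $X_{id'+k}=\{x'_{i,m}\mid id'+m\in Y_{i,k}\}$; since $f'_{\underline\lambda}$ is the identity on the tableau factors, it sends $\mathfrak b_{\underline\lambda}$ bijectively onto $\mathfrak b''_{\underline\lambda}$.

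The conceptual reason for $G$-equivariance is transitivity of induction: the identities $G_c=K_0\times\cdots\times K_{q-1}$ and $V_{\underline\lambda}\cong\bigotimes_{i=0}^{q-1}V_{\underline\mu,i}$ as $G_c$-modules (which reduces to comparing $\alpha_c$ with $\prod_i\alpha_{r'}^{d'i}\cdot\alpha_{c'}$ coordinate by coordinate on $\mathcal U_d^r$, plus a trivial matching of the $\sym_c$-actions) give
$$W_{\underline\lambda}=\Ind_{G_c}^G V_{\underline\lambda}\cong\Ind_H^G\bigotimes_{i=0}^{q-1}\Ind_{K_i}^{H_i}V_{\underline\mu,i}=W''_{\underline\lambda}.$$
To see that the explicit $f'_{\underline\lambda}$ realises this abstract isomorphism, I would mimic the proof of Proposition~\ref{prop:isoKerMar} and verify the identity $\rho''_{\underline\lambda}(g)\circ f'_{\underline\lambda}=f'_{\underline\lambda}\circ\rho_{\underline\lambda}(g)$ only for the generators $g\in\{t,s_1,\ldots,s_{r-1}\}$. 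The case of $t$ and of a transposition $s_i$ whose two indices lie in the same part $X_j$ are both routine applications of formula~(\ref{eq:formuleactioninduite}). The main obstacle is the case of a transposition $s_i$ swapping indices that lie in distinct size-$r'$ blocks of $l(X)$: here one must apply the decomposition~(\ref{eq:decrepind}) at two nested levels of induction (outer $G/H$ and inner $H_i/K_i$), and verify that the $\mathcal U_d^r$-twist produced by the outer action through $\alpha_c$ matches the combined inner twists produced through $\alpha_{r'}^{d'i}\cdot\alpha_{c'}$ in each $H_i$-factor; this is essentially the coordinate-wise character identity already used for the abstract isomorphism, now repackaged at the level of basis vectors.
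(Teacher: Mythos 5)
Your proof is correct in substance, but for the equivariance step you take a more laborious route than the paper, and you stop short of carrying out its hardest case. The paper's proof consists exactly of the structural chain you call the ``conceptual reason'': it writes down the canonical transitivity isomorphism $\kappa_1\colon\Ind_H^G(\Ind_{G_c}^H(V_{\underline\lambda}))\to\Ind_{G_c}^G(V_{\underline\lambda})$, the canonical isomorphism $\Ind_{G_c}^H(V_{\underline\lambda})\cong\bigotimes_i\Ind_{K_i}^{H_i}(V_{\underline\mu,i})$ coming from $G_c=K_0\times\cdots\times K_{q-1}$, and then observes the single combinatorial identity $t_X=t_{l(X)}t_{Y_0(X)}\cdots t_{Y_{q-1}(X)}$ together with the bijectivity of $X\mapsto(l(X),Y_0(X),\ldots,Y_{q-1}(X))$. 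That identity shows that $f'_{\underline\lambda}$ \emph{is} the composite $\kappa_2\circ\kappa_1^{-1}$ of these canonical $G$-module maps, so equivariance is automatic and no generator-by-generator check is needed. You instead treat the abstract isomorphism as motivation only and propose to verify $\rho''_{\underline\lambda}(g)\circ f'_{\underline\lambda}=f'_{\underline\lambda}\circ\rho_{\underline\lambda}(g)$ on $\{t,s_1,\ldots,s_{r-1}\}$ in the style of Proposition~\ref{prop:isoKerMar}. That plan would succeed, and your coordinate-wise comparison of $\alpha_c$ with $\alpha_{r'}^{d'i}\otimes\alpha_{c'}$ on each block is the right bookkeeping, but the case of a transposition straddling two size-$r'$ blocks of $l(X)$ --- which you only sketch --- is precisely the computation that the factorization of $t_X$ renders unnecessary. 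The lesson to take from the paper's version is that once an explicit map is exhibited as a composite of canonical induced-module isomorphisms evaluated on compatible coset representatives, equivariance comes for free; the only thing to verify is the multiplicative compatibility of the representatives themselves.
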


\begin{proof}
Note that
\begin{equation}
\label{eq:Gcproddir}
G_c=K_0\times\cdots\times K_{q-1},
\end{equation}
and, viewed as a $G_c$-representation with respect to the direct 
product~(\ref{eq:Gcproddir}), we have
$$V_{\underline\lambda}=V_{\underline\mu,0}\otimes\cdots\otimes
V_{\underline\mu,q-1}.$$

By Lemma~\ref{lem:rep}, 
$\{t_{X'}\,|\,X'\in \mathcal X_{(r',\ldots,r')}\}$ is a system of coset
representatives of $G/H$, and 
$\{t_{Y_0}\cdots t_{Y_{q-1}}\,|\,Y_i\in
X_{\underline\mu,i}\}$ is a system of coset representatives for $H/G_c$.
Then there is an isomorphism of $G$-modules
$\kappa_1:\Ind_H^G(\Ind_{G_c}^H(V_{\underline\lambda}))\longrightarrow\Ind_{G_c}^G(V_{\underline\lambda})$
given on any basis $\{v\}$ of $V_{\underline\lambda}$ by $$\kappa_1(t_{X'}\otimes t_{Y_0}\cdots t_{Y_{q-1}}\otimes
v)=t_{X'}t_{Y_0}\cdots t_{Y_{q-1}}\otimes v,$$
where $X'\in \mathcal X_{(r',\ldots,r')}$ and $Y_i\in X_{\underline\mu,i}$.
Furthermore, we have
\begin{eqnarray*}
\Ind_{G_c}^H(V_{\underline\lambda})&=&\Ind_{K_0\times\cdots\times
K_{q-1}}^{H_0\times\cdots\times
H_{q-1}}(V_{\underline\mu,0}\otimes\cdots\otimes
V_{\underline\mu,q-1})\\
&\cong&\Ind_{K_0}^{H_0}(V_{\underline\mu,0})\otimes\cdots\otimes
\Ind_{K_{q-1}}^{H_{q-1}}(V_{\underline\mu,q-1}).
\end{eqnarray*}
The last isomorphism of $H$-modules is for example given by
$$t_{Y_0}\cdots t_{Y_{q-1}}\otimes v_{T'_0}\otimes\cdots
v_{T'_{q-1}}\mapsto (t_{Y_0}\otimes v_{T'_0})\otimes\cdots\otimes (t_{Y_{q-1}}\otimes
v_{T'_{q-1}})$$
for all $Y_i\in X_{\underline\mu,i}$ and $T'_i\in\mathcal T'$.
We thus obtain an isomorphism of $G$-modules
$\kappa_2:\Ind_H^G(\Ind_{G_c}^H(V_{\underline\lambda}))\longrightarrow
\Ind_{H}^G(U_{\underline\mu})$ given by
$$\kappa_2(t_{X'}\otimes t_{Y_0}\cdots t_{Y_{q-1}}\otimes v_{T'_0}\otimes\cdots
v_{T'_{q-1}})=t_{X'}\otimes
(t_{Y_0}\otimes v_{T'_0})\otimes\cdots\otimes (t_{Y_{q-1}}\otimes
v_{T'_{q-1}}).$$
Now, note that the map 
\begin{equation}
\label{eq:bijkappa}
\kappa:\mathcal X_c\rightarrow \mathcal X_{(r',\ldots,r')}\times
X_{\underline\mu,0}\times\cdots\times X_{\underline\mu,q-1},\
X\mapsto(l(X),Y_0(X),\ldots,Y_{q-1}(X))
\end{equation} 
is bijective and that
$$t_X=t_{l(X)}t_{Y_0(X)}\cdots t_{Y_{q-1}(X)}.$$
It follows that $f_{\ula}'=\kappa_2\circ\kappa_1^{-1}$ has the required
property.
\end{proof}

\begin{remark}
\label{rk:isoWi}
Note that $H_0=G(d,1,r')$ and that $W_{\underline\mu,0}$ is the irreducible
representation of $H_0$ labeled by
$(\underline\mu,\emptyset,\ldots,\emptyset)$. In the same way, for every
$0\leq i\leq q-1$, the group $H_i$ can be viewed as a complex reflexion
group $G(d,1,r')$ with support $E_i$. The
irreducible representation $W_{\underline\mu,i}$ of $H_i$ is then
labeled by
$(\emptyset,\ldots,\emptyset,\underline\mu,\emptyset,\ldots,\emptyset)$,
where $\underline\mu$ lies in $i$th-coordinate. In the following, we 
will identify  $H_i$ with $H_0$ as well as
$W_{\underline\mu,i}$ with $W_{\underline\mu,0}$ as follows.
Let $0\leq i\leq q-1$. Write $$\tau_i:\{ir'+1\ldots
(i+1)r'\}\rightarrow \{1,\ldots,r'\},\,ir'+j\mapsto j.$$
Then $\tau_i$ induces a group isomorphism between $H_i$ and $H_0$.
Furthermore, for $Y\in X_{\underline\mu,i}$, define $Y^0\in X_{c'}$ 
by setting $Y^0_k=\tau_i(Y_{id'+k})$ for all $0\leq k\leq d'-1$.
Then the $H_i$-module $W_{\underline\mu,i}$ and the $H_0$-module
$\alpha_{r'}^{d'i}\otimes W_{\underline\mu}$ are isomorphic. An
isomorphism $\mathfrak f_i$ is given on the basis $\{t_Y\otimes v_{T'}\}$ 
by
\begin{equation}
\label{eq:isointer}
t_Y\otimes v_{T'}\mapsto t_{\tau_i(Y)}\otimes v_{T'},
\end{equation}
for all $Y\in X_{\underline\mu,i}$ and $T'\in\mathcal T'$.

\end{remark}

\section{Character formula for the irreducible representations of $G(de,e,r)$}
\label{sec:Gdeer}
Let $e$, $d$ and $r$ be positive integers, and write $G=G(de,1,r)$. 

\noindent
Let
$\underline\varepsilon=(\emptyset,\cdots,\emptyset,(r),\emptyset,\cdots,\emptyset)\in\mathcal{MP}_{r,de}$,
where the non-empty part of $\underline\varepsilon$ lies in position
$de-1-d$.
Then $\varepsilon=\rho_{\underline\varepsilon}$ is a linear character
of $G$ of order $e$, and we denote by $N=G(de,e,r)$ its kernel. In
particular, if $z=(z_1,\ldots,z_r)\in\mathcal U_{de}^r$ and $\sigma\in\sym_r$, then $z\sigma$ lies in $N$ if and only if $\varepsilon(z)=1$, that is $z_1\cdots
z_r\in\mathcal U_d$. 

\subsection{Representations of $G(de,e,r)$}
\label{sec:representations}
Let
$\underline\lambda=(\lambda^{(0)},\ldots,\lambda^{(de-1)})\in
\mathcal{MP}_{r,de}$. Note that, by construction, $\Res_{\mathcal
U_{de}^r}^G(\varepsilon)=\alpha^d\otimes\cdots\otimes\alpha^d\in\Irr(\mathcal
U_{de}^r)$. It follows from Equation~(\ref{eq:defrepgd1}) that
$$\varepsilon\otimes\rho_{\underline\lambda}=\varepsilon\otimes\Ind_{G_c}^G(\vartheta_{\underline\lambda})\cong\Ind_{G_c}^G(\varepsilon\otimes\vartheta_{\underline\lambda})=\Ind_{G_c}^G(\vartheta_{\varepsilon(\underline\lambda)})=\rho_{\varepsilon(\underline\lambda)},$$
where
$\varepsilon(\underline\lambda)=(\lambda^{(d)},\lambda^{(d+1)},\ldots,\lambda^{(de+d)})$;
note that, here, the indices are taken modulo $de$. 

Let $a$ be a divisor of $e$ such that 
$\varepsilon^{a}(\underline\lambda)=\underline\lambda$. Then 
$\lambda^{(da+k)}=\lambda^{(k)}$ for any $k$, and
\begin{equation}
\label{eq:divir}
r=|\langle\varepsilon^{a}\rangle|\sum_{k=0}^{d{a}-1}|\lambda^{(k)}|.
\end{equation}

The set $C_{\underline\lambda}=\{\varepsilon^j\,|\,
\varepsilon^j\otimes\rho_{\underline\lambda}\cong\rho_{\underline\lambda}\}$
is a subgroup of the cyclic group $\langle\varepsilon\rangle$, hence
there is a divisor $b_{\ula}$ of $e$ such that
$C_{\underline\lambda}=\langle\varepsilon^{b_{\ula}}\rangle$. Furthermore, by
Clifford theory, $\Res^G_N(\rho_{\underline\lambda})$ is the sum of
$|C_{\underline\lambda}|$ non isomorphic irreducible $N$-modules.
Following~\cite[\S\,2.4]{Marin-Michel}, they can be described as
follows. 
By Schur's Lemma and the fact that $\C$ is algebraically 
closed, we can choose a bijective linear map
$M_{\ula}\in\operatorname{Hom}_G(\rho_{\underline\lambda},\varepsilon^{b_{\ula}}\otimes\rho_{\underline\lambda})$
such that $M_{\ula}$ has order $|C_{\underline\lambda}|$. 
On the other hand, $M_{\ula}$ is diagonalisable and
has exactly $|C_{\underline\lambda}|$ eigenspaces with eigenvalues in $\mathcal
U_{|C_{\underline\lambda}|}$. Denote by $W_{\underline\lambda,k}$ the eigenspace
attached to the eigenvalue $\zeta^{b_{\ula}dk}$, where ${\cal U}_{de}=\cyc{\zeta}$ (so that ${\cal U}_{|C_{\underline\lambda}|}=\cyc{\zeta^{b_{\ula}d}}$). Then
$\{W_{\underline\lambda,k}\,|\, 0\leq k\leq |C_{\underline\lambda}|-1\}$ 
is the set of
irreducible $N$-modules appearing in the decomposition of
$W_{\underline\lambda}$ into simple $N$-modules. 

For $0\leq k\leq |C_{\underline\lambda}|-1$, 
denote by $\chi_{\underline\lambda,k}$ the character of the $N$-module
$W_{\underline\lambda,k}$ and by 
$$\Delta_{\underline\lambda,k}(g)=\operatorname{Tr}(M_{\ula}^k\circ
\rho_{\underline\lambda}(g))\quad\text{for all } g\in N.$$
Then we have (see~\cite{Marin-Michel})
\begin{equation}
\label{eq:chidelta}
\chi_{\underline\lambda,k}=\frac{1}{|C_{\underline\lambda}|}\sum_{j=0}^{|C_{\underline\lambda}|-1}\zeta^{-db_{\ula}kj}\Delta_{\underline\lambda,j}.
\end{equation}
Now, using the first orthogonality relation, we deduce that, for $0\leq k\leq
|C_{\underline\lambda}|-1$,
\begin{equation}
\label{eq:chardiff}
\Delta_{\underline\lambda,k}=\sum_{j=0}^{|C_{\underline\lambda}|-1}\zeta^{db_{\ula}kj}\chi_{\underline\lambda,j}. 
\end{equation}


\begin{proposition}
\label{prop:actiondeM}
Let $\underline\lambda=(\lambda^{(0)},\ldots,\lambda^{(de-1)})
\in\mathcal{MP}_{r,de}$ and $c=(c_0,\ldots,c_{de-1})$ be such that
$c_i=|\lambda^{(i)}|$ for all $0\leq i\leq de-1$. Let $b_{\ula}$ be a divisor
of $e$ such that
$C_{\underline\lambda}=\langle\varepsilon^{b_{\ula}}\rangle$.
Define $m_{\ula}:\mathcal X_c\rightarrow \mathcal X_c$ by setting, for any
$X=(X_0,\ldots,X_{de-1})\in \mathcal X_c$,
$$m_{\ula}(X)=(X_{db},\ldots,X_{db+de-1}),$$
where indices are taken modulo $de$. Then the linear map
$M_{\ula}\in\operatorname{Hom}_G(\rho_{\underline\lambda},\varepsilon^{b_{\ula}}\otimes\rho_{\underline\lambda})$
as above can be described on the basis $\mathfrak
b_{\underline\lambda}$ of $W_{\underline\lambda}$ as follows. 
$$M_{\ula}(t_X\otimes v_{\lambda^{(0)},T_0}\otimes\cdots\otimes
v_{\lambda^{(de-1)},T_{de-1}})=t_{m_{\ula}(X)}\otimes v_{\lambda^{(db)},T_{db}}
\otimes\cdots\otimes
v_{\lambda^{(db+de-1)},T_{db+de-1}},$$
where
$X\in \mathcal X_c$ and $T_i\in\operatorname{ST}(\lambda^{(i)})$.
\end{proposition}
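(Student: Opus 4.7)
The plan is to verify by direct computation that the explicit formula defines a non-zero element of $\operatorname{Hom}_G(\rho_\ula,\varepsilon^{b_\ula}\otimes\rho_\ula)$ (a one-dimensional space, by Schur's Lemma and the definition of $C_\ula$), and that the resulting map has order $|C_\ula|$; this pins $M_\ula$ down up to a central root of unity and identifies it with the intertwiner used in equations~(\ref{eq:chidelta})--(\ref{eq:chardiff}). A preliminary well-definedness check is in order: the condition $\varepsilon^{b_\ula}(\ula)=\ula$ amounts to $\lambda^{(db_\ula+i)}=\lambda^{(i)}$ and $c_{db_\ula+i}=c_i$ for all $i$ (indices mod $de$), so each $v_{\lambda^{(db_\ula+i)},T_{db_\ula+i}}$ is a genuine basis vector of $V_{\lambda^{(i)}}$ and $m_\ula(X)\in\mathcal{X}_c$. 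Hence $M_\ula$ sends the basis $\mathfrak b_\ula$ bijectively to itself.

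To establish the intertwining identity $M_\ula\circ\rho_\ula(g)=\varepsilon^{b_\ula}(g)\,\rho_\ula(g)\circ M_\ula$, it suffices to check it on the generators $g\in\{t,s_1,\ldots,s_{r-1}\}$. For $g=t$, I would apply~(\ref{eq:formuleactioninduite}) with $z=(\zeta,1,\ldots,1)$ and $\sigma=1$ to both sides: a short calculation shows $\rho_\ula(t)(t_X\otimes v)=\zeta^{i}\,t_X\otimes v$, where $i$ is the unique index with $1\in X_i$, and similarly $\rho_\ula(t)(t_{m_\ula(X)}\otimes v')=\zeta^{i-db_\ula}\,t_{m_\ula(X)}\otimes v'$. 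Since $\varepsilon^{b_\ula}(t)=\zeta^{db_\ula}$ by the stated restriction of $\varepsilon$ to $\mathcal{U}_{de}^r$, the two scalars match. For $g=s_i$ we have $\varepsilon^{b_\ula}(s_i)=1$, because $\varepsilon$ is trivial on $\sym_r$ (the relevant Specht module $V_{(r)}$ being trivial), so what must be shown is that $M_\ula$ commutes with $\rho_\ula(s_i)$. Splitting according to whether $i,i+1$ lie in the same component of $X$ or in different ones, one observes that the cyclic shift $m_\ula:\mathcal{X}_c\to\mathcal{X}_c$ commutes with the natural action of $s_i$ on $\mathcal{X}_c$, and that the Specht-module scalars in~(\ref{eq:actionsn}) depend only on the relative positions of $i$ and $i+1$ inside their tableau, not on the label of that tableau within the multi-tableau; this gives the required equality on each basis element.

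Finally, for the order, since each application of $M_\ula$ shifts indices by $db_\ula$, the power $M_\ula^{|C_\ula|}$ shifts by $db_\ula\cdot(e/b_\ula)=de\equiv 0\pmod{de}$, so $M_\ula^{|C_\ula|}=\operatorname{id}_{W_\ula}$ on every basis element; in particular $M_\ula$ has order exactly $|C_\ula|$, and by Schur it coincides with the distinguished intertwiner fixed before the statement. The main obstacle is the bookkeeping in the $s_i$-case: when $i,i+1$ lie in the same component of $X$, one must carefully compare the auxiliary permutations $\widetilde\sigma_j$ produced by Lemma~\ref{lem:rep} applied to $s_it_X$ with those attached to $s_it_{m_\ula(X)}$, and check that the cyclic shift of positions transports one set into the other in a way compatible with~(\ref{eq:formuleactioninduite}) and with the identification of the Specht modules at positions $i$ and $i-db_\ula\pmod{de}$.
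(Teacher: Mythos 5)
Your argument is correct, but it takes a genuinely different route from the paper. The paper's proof is a one-line transport argument: it quotes the intertwiner $M'_{\ula}$ of Marin--Michel, defined on the standard multi-tableau basis of $W'_{\ula}$ by the cyclic shift $\underline T\mapsto(T_{db},\ldots,T_{db+de-1})$, and then checks that the stated formula is precisely $f_{\ula}\circ M'_{\ula}\circ f_{\ula}^{-1}$ using the isomorphism $f_{\ula}$ of Proposition~\ref{prop:isoKerMar}. You instead verify from scratch, on the generators $t,s_1,\ldots,s_{r-1}$ and via~(\ref{eq:formuleactioninduite}), that the explicit map lies in $\operatorname{Hom}_G(\rho_{\ula},\varepsilon^{b_{\ula}}\otimes\rho_{\ula})$ and has order $|C_{\ula}|$; your identification of the key checkpoints (the cancellation of $\zeta^{db_{\ula}}$ against $\varepsilon^{b_{\ula}}(t)$ in the $t$-case, the commutation of the index shift with the $s_i$-action on $\mathcal X_c$, and the invariance of the axial-distance scalar in~(\ref{eq:actionsn}) under relabelling of the tableau's position in the multi-tableau) is exactly where the content lies, and these all go through. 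What the paper's route buys is brevity, at the cost of relying on the external reference for $M'_{\ula}$ and on the already-established Proposition~\ref{prop:isoKerMar} (whose proof contains the same generator bookkeeping you redo here); your route is self-contained and makes the induced-module model do all the work. Two small points to tighten: deducing that the order is \emph{exactly} $|C_{\ula}|$ needs one more line --- if $M_{\ula}^j=\operatorname{id}$ then the intertwining relation forces $\varepsilon^{jb_{\ula}}=1$, hence $|C_{\ula}|$ divides $j$ --- and Schur's Lemma identifies your map with the paper's chosen $M_{\ula}$ only up to a $|C_{\ula}|$-th root of unity, which is harmless since the proposition only asserts that $M_{\ula}$ \emph{can} be chosen to act by this formula.
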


\begin{proof}
In~\cite[\S\,2.4]{Marin-Michel}, a bijective linear map
$M_{\ula}'\in\operatorname{Hom}_G(\rho'_{\underline\lambda},\varepsilon^{b_{\ula}}\otimes\rho'_{\underline\lambda})$
of order $|C_{\underline\lambda}|$
is described on the basis $\mathcal T(\underline\lambda)$ of
$W'_{\underline\lambda}$ as follows. For every
$T=(T_0,\ldots,T_{de-1})\in\mathcal T(\underline\lambda)$, we set
\begin{equation}
\label{eq:intertwinerIvanJean}
M_{\ula}'(T)=(T_{db},\ldots,T_{db+de-1}).
\end{equation}
Now, using Proposition~\ref{prop:isoKerMar}, we check that
$$M_{\ula}\circ f_{\ula}=f_{\ula}\circ M_{\ula}'.$$
The result follows.
\end{proof}

Let $b'$ be a divisor of $|C_{\underline\lambda}|$. Then
$q=\frac{|C_{\underline\lambda}|}{b'}=\frac e{b_{\ula}b'}$ is the order of
$M^{b'}$ and  $\varepsilon^{b_{\ula}b'}(\underline\lambda)=\underline\lambda$.
Hence, Relation~(\ref{eq:divir}) applied to $a=b_{\ula}b'$ gives that $q$ divides
$r$. Let $r'\in\N$ be such that $r=qr'$. 

\begin{proposition}
\label{prop:groupeordre}
We keep the notation as above. Write
$\underline\mu=(\lambda^{(0)},\ldots,\lambda^{(db_{\ula}b'-1)})$ so that
$\underline\lambda=(\underline\mu,\ldots,\underline\mu)$ as in
Relation~(\ref{eq:multisymetrique}).
Define $m_{\ula}'':\mathcal X_{(r',\ldots,r')}\rightarrow \mathcal X_{(r',\ldots,r')}$ by
$$m_{\ula}''(X'_0,\ldots,X'_{q-1})=(X'_1,\ldots,X'_{q-1},\,X'_0),$$
and, for all $\mathfrak u=t_{X'}\otimes(t_{Y_0}\otimes v_{T'_0})\otimes\cdots
\otimes (t_{Y_{q-1}}\otimes v_{T'_{q-1}})$ with $X'\in
\mathcal X_{(r',\ldots,\,r')}$, $Y_i\in X_{\underline\mu,i}$ and $T'_i\in\mathcal
T'$, we set
$$M_{\ula}''(\mathfrak u)=t_{m_{\ula}''(X')}\otimes(t_{Y_1}\otimes v_{T'_0})\otimes\cdots
\otimes (t_{Y_{q-1}}\otimes v_{T'_{q-2}})\otimes (t_{Y_{0}}\otimes
v_{T'_{q-1}}).$$
Then $$M_{\ula}''\circ f_{\ula}'=f_{\ula}'\circ M_{\ula}^{b'}.$$
\end{proposition}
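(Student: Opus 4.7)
The plan is to verify the identity $M_{\ula}''\circ f_{\ula}'=f_{\ula}'\circ M_{\ula}^{b'}$ by direct computation on a generic basis element $\mathbf{v}=t_X\otimes v_{T'_0}\otimes\cdots\otimes v_{T'_{q-1}}$ of $W_{\underline\lambda}$, where $X\in\mathcal X_c$ and each $T'_i\in\mathcal T'$ (the regrouping that writes $\mathfrak b_{\underline\lambda}$ as an outer tensor of $q$ block-tableau tuples uses precisely the $\varepsilon^{b_{\ula}b'}$-invariance $\lambda^{(jd'+k)}=\lambda^{(k)}$ guaranteed by the choice of $b'$).

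For the composition $M_{\ula}''\circ f_{\ula}'$, I first apply Proposition~\ref{prop:isopordre} to obtain
$f_{\ula}'(\mathbf v)=t_{l(X)}\otimes(t_{Y_0(X)}\otimes v_{T'_0})\otimes\cdots\otimes(t_{Y_{q-1}(X)}\otimes v_{T'_{q-1}})$,
and then read off $M_{\ula}''$ from its definition. For $f_{\ula}'\circ M_{\ula}^{b'}$, I iterate the description of $M_{\ula}$ from Proposition~\ref{prop:actiondeM} $b'$ times; because a shift of the tableau positions by $db_{\ula}$ is applied $b'$ times, each $v_{\lambda^{(j)},T_j}$ moves to slot $j-d'\ (\mathrm{mod}\,de)$, so that on the block-level the tuple $(T'_0,\ldots,T'_{q-1})$ undergoes a single cyclic shift, giving $M_{\ula}^{b'}(\mathbf v)=t_{m_{\ula}^{b'}(X)}\otimes v_{T'_1}\otimes\cdots\otimes v_{T'_{q-1}}\otimes v_{T'_0}$. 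Then I apply $f_{\ula}'$ to this expression.

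Comparing the two resulting elements of $W_{\underline\lambda}''$ reduces to three combinatorial checks. First, $l(m_{\ula}^{b'}(X))=m_{\ula}''(l(X))$: both sides are the cyclic left-shift of $(X'_0,\ldots,X'_{q-1})$ by one, since $m_{\ula}^{b'}$ permutes the $de$ components of $X$ in blocks of $d'$. Second, for every $0\le i\le q-1$, the $X_{\underline\mu,i}$-component $Y_i(m_{\ula}^{b'}(X))$ must be identified with $Y_{(i+1)\ \mathrm{mod}\ q}(X)\in X_{\underline\mu,(i+1)\ \mathrm{mod}\ q}$ via the support-shift isomorphism of Remark~\ref{rk:isoWi}; this follows from the definition of $Y_i(\cdot)$ since the ordered list of elements of $X'_{i+1\ \mathrm{mod}\ q}$ is unchanged, and only the $ir'$ vs.\ $(i+1)r'$ offset differs. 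Third, the placement of the $v_{T'_i}$ factors across the two expressions matches after the position-relabeling induced by $f_{\ula}'$ applied to the shifted tuple.

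The main obstacle is the bookkeeping: one must carefully align the three coexisting indexings (the $de$-indexed components of $\mathcal X_c$, the $q$-indexed blocks that $l$, $m_{\ula}''$ and the tensor product in $U_{\underline\mu}$ operate on, and the supports $E'_i=\{ir'+1,\ldots,(i+1)r'\}$ of the factors $H_i$), and in particular make explicit the implicit identifications $X_{\underline\mu,(i+1)\ \mathrm{mod}\ q}\cong X_{\underline\mu,i}$ that are built into the formula defining $M_{\ula}''$. Once these identifications are spelled out via the maps $\tau_i$ of Remark~\ref{rk:isoWi}, the two expressions coincide termwise and the equality $M_{\ula}''\circ f_{\ula}'=f_{\ula}'\circ M_{\ula}^{b'}$ follows.
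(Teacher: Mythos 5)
Your proof is correct and follows essentially the same route as the paper's: both reduce the identity to the two combinatorial facts $l(m_{\ula}^{b'}(X))=m_{\ula}''(l(X))$ and the cyclic shift of the components $Y_i(X)$ under $m_{\ula}^{b'}$ (read through the identifications $\tau_i$ of Remark~\ref{rk:isoWi}), and then conclude from the explicit description of $f_{\ula}'$ in Proposition~\ref{prop:isopordre}. The paper simply packages your three checks into the single identity $\kappa\circ m_{\ula}^{b'}(X)=(m_{\ula}''(l(X)),Y_1(X),\ldots,Y_{q-1}(X),Y_0(X))$ for the bijection $\kappa$ of~(\ref{eq:bijkappa}).
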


\begin{proof}
We remark that for all $X\in \mathcal X_c$, 
one has $l(m_{\ula}^{b'}(X))=m_{\ula}''(l(X))$, where
$l:\mathcal X_c\rightarrow \mathcal X_{(r',\ldots,r')}$ is the map defined in
Proposition~\ref{prop:isopordre}.
Let $\kappa:\mathcal X_c\rightarrow \mathcal X_{(r',\ldots,r')}\times
X_{\underline\mu,0}\times \cdots \times
X_{\underline\mu,q-1}$ be the bijection defined in
Relation~(\ref{eq:bijkappa}).
Then for all $X\in \mathcal X_c$, 
$$\kappa\circ m_{\ula}^{b'}(X)=(m_{\ula}''(l(X)),Y_1(X),\ldots,Y_{q-1}(X),\,Y_0(X)).$$
The result then follows from Proposition~\ref{prop:isopordre}.
\end{proof}

\subsection{Values of $\Delta_{\underline\lambda,k}$}
\label{sec:characters}

%
\begin{lemma}
\label{lem:valcar1}
Let $\underline\lambda\in\mathcal{MP}_{r,de}$ be such that
$C_{\underline\lambda}=\langle\varepsilon^{b_{\ula}}\rangle$ for some divisor $b_{\ula}$
of $e$. Let $b'$ be a divisor of $|C_{\underline\lambda}|$, 
$q=|C_{\underline\lambda}|/b'$ and $r'\in\N$ be such $r=qr'$. Write
$\mu=(\lambda^{(0)},\ldots,\lambda^{(db_{\ula}b'-1)})$ so that
$\underline\lambda=(\underline\mu,\ldots,\underline\mu)$. For $g\in G$
and $X\in \mathcal X_{(r',\ldots,r')}$, define $X_g\in \mathcal
X_{(r',\ldots,r')}$ and
$g^{X}_{i}\in H_i$ for $0\leq i\leq q-1$ such that
$gt_{X}=t_{X_g}g_0^X\cdots g_{q-1}^X$.
Then
$$\Delta_{\underline\lambda,b'}(g)=\sum_{X\,| \, m_{\ula}''(X_g)=X}\left(
\prod_{i=0}^{q-1}
\alpha_{r'}^{db_{\ula}b'i}(g_i^X)\right)\chi_{\underline\mu}(\overline{g}_0^X
\cdots \overline{g}_{q-1}^X),$$
where $\chi_{\underline\mu}$ denotes the character of the irreducible
representation of
$\Irr(H_0)$ labeled by $(\underline\mu,\emptyset,\ldots,\emptyset)$ and $\overline g_i^X\in H_0$ 
is the image of $g_i^X$ by the isomorphism $H_i\rightarrow H_0$ induced
by the bijection $\tau_i$ given in Remark~\ref{rk:isoWi}.
\end{lemma}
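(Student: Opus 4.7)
The plan is to transport the computation from $W_{\underline\lambda}$ to the isomorphic module $W''_{\underline\lambda}$, where the intertwiner becomes a cyclic shift. By Proposition~\ref{prop:isopordre}, $f_{\ula}'$ is a $G$-isomorphism, and by Proposition~\ref{prop:groupeordre}, $M_{\ula}''=f_{\ula}'\circ M_{\ula}^{b'}\circ(f_{\ula}')^{-1}$, so
\[
\Delta_{\underline\lambda,b'}(g)=\operatorname{Tr}\bigl(M_{\ula}^{b'}\circ\rho_{\underline\lambda}(g)\bigr)=\operatorname{Tr}\bigl(M_{\ula}''\circ\rho''_{\underline\lambda}(g)\bigr),
\]
and my task reduces to evaluating the right-hand trace on the basis $\mathfrak b''_{\underline\lambda}$. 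First I would decompose $gt_X=t_{X_g}g_0^X\cdots g_{q-1}^X$ with $g_i^X\in H_i$, so that $\rho''_{\underline\lambda}(g)$ sends the $t_X$-sector of $W''_{\underline\lambda}$ to the $t_{X_g}$-sector via the operator $\rho_0(g_0^X)\otimes\cdots\otimes\rho_{q-1}(g_{q-1}^X)$ on $U_{\underline\mu}$. Applying $M_{\ula}''$ further shifts the $X$-index to $m_{\ula}''(X_g)$, so only those $X$ with $m_{\ula}''(X_g)=X$ contribute to the trace, producing the outer sum.

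For each such $X$, I need the ``inner trace'' on $U_{\underline\mu}$ of the operator $M_{\ula}''\circ\bigl(\rho_0(g_0^X)\otimes\cdots\otimes\rho_{q-1}(g_{q-1}^X)\bigr)$. The key idea is to invoke Remark~\ref{rk:isoWi} and identify each $W_{\underline\mu,i}$ with $W_{\underline\mu,0}$ via $\mathfrak f_i$. Under this identification $\rho_i(g_i^X)$ corresponds to $\alpha_{r'}^{db_{\ula}b'i}(g_i^X)\cdot\rho_0(\overline{g}_i^X)$, the scalar being independent of the basis vector since $\alpha_{r'}^{db_{\ula}b'i}$ is invariant under permutations of the $r'$ coordinates. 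The induced identification of $U_{\underline\mu}$ with $W_{\underline\mu,0}^{\otimes q}$ then turns $M_{\ula}''$ into the cyclic shift $\Sigma$ sending $w_0\otimes\cdots\otimes w_{q-1}$ to $w_1\otimes\cdots\otimes w_{q-1}\otimes w_0$.

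The inner trace then follows from the classical cyclic identity
\[
\operatorname{Tr}_{V^{\otimes q}}\bigl(\Sigma\circ(A_0\otimes\cdots\otimes A_{q-1})\bigr)=\operatorname{Tr}_V(A_0 A_1\cdots A_{q-1}),
\]
applied with $V=W_{\underline\mu,0}$ and $A_i=\rho_0(\overline{g}_i^X)$, yielding $\chi_{\underline\mu}(\overline{g}_0^X\cdots\overline{g}_{q-1}^X)$. Multiplying by the extracted scalar $\prod_i\alpha_{r'}^{db_{\ula}b'i}(g_i^X)$ and summing over admissible $X$ produces the claimed formula. The main obstacle will be the bookkeeping of identifications in the second paragraph: one must carefully pass back and forth between $X_{\underline\mu,i}$ and $X_{c'}$ via the maps $\tau_i$, and between $H_i$ and $H_0$, so that $M_{\ula}''$ really does agree, after these identifications, with the cyclic shift $\Sigma$ on $W_{\underline\mu,0}^{\otimes q}$; once this is established, the tensor trace identity finishes the argument.
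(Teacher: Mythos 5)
Your proposal is correct and follows essentially the same route as the paper's proof: reduce via $f_{\ula}'$ to $\operatorname{Tr}(M_{\ula}''\circ\rho''_{\underline\lambda}(g))$, observe that only those $X$ with $m_{\ula}''(X_g)=X$ contribute, extract the scalar $\prod_i\alpha_{r'}^{db_{\ula}b'i}(g_i^X)$ through the identifications of Remark~\ref{rk:isoWi}, and evaluate the remaining cyclic-shift trace. The only cosmetic difference is that you invoke the standard identity $\operatorname{Tr}\bigl(\Sigma\circ(A_0\otimes\cdots\otimes A_{q-1})\bigr)=\operatorname{Tr}(A_0\cdots A_{q-1})$ where the paper rederives it by an explicit matrix-coefficient computation.
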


\begin{proof}
By Propositions~\ref{prop:isopordre} and~\ref{prop:groupeordre}, we have
$\rho_{\underline\lambda}(g)=f_{\ula}'^{-1}\circ
\rho''_{\underline\lambda}(g)\circ f_{\ula}'$ and $M_{\ula}^{b'}=f_{\ula}'^{-1}\circ
M_{\ula}''\circ f_{\ula}'$. Hence
$$\Delta_{\underline\lambda,b'}(g)=\operatorname{Tr}(M_{\ula}^{b'}\circ\rho_{\underline\lambda}(g))=\operatorname{Tr}(f_{\ula}'^{-1}\circ
M_{\ula}''\circ\rho''_{\underline\lambda}(g)\circ f_{\ula}')=\operatorname{Tr}(
M_{\ula}''\circ\rho''_{\underline\lambda}(g)).$$
Let $\mathfrak u=t_{X}\otimes(t_{Y_0}\otimes v_{T'_0}) \otimes\cdots\otimes
(t_{Y_{q-1}}\otimes v_{T'_{q-1}})\in\mathfrak b''_{\underline\lambda}$.
We have
$$M_{\ula}''\circ\rho''_{\underline\lambda}(g)(\mathfrak u)=\left(
\prod_{i=0}^{q-1}
\alpha_{r'}^{db_{\ula}b'i}(g_i^X)\right) t_{m_{\ula}''(X_g)}\,
\left(\bigotimes_{i=1}^{q}g_i^X\cdot(t_{Y_i}\otimes
v_{T'_{i-1}})\right),
$$
where the indices are taken modulo $q$.
To simplify the notation, we denote the basis $\{t_{Y_0}\otimes
v_{T'_0}\,|\, Y_0 \in X_{\underline\mu,0},\,T'_0\in \mathcal T'\}$
of $W_{\underline\mu}$ by
$\mathfrak e=\{e_1,\ldots,\,e_s\}$, and the basis of $W_{\underline\mu,i}$ is then
equal to $\mathfrak f_i^{-1}(\mathfrak e)$ by~(\ref{eq:isointer}). 
Then, by Remark~\ref{rk:isoWi}, for all $0\leq i\leq q-1$,
the matrix of  $g_i^X\cdot (t_{Y_i}\otimes
v_{T_{i-1}'})$ (where the indices are taken modulo $q$) 
with respect to the basis $\mathfrak f_i^{-1}(\mathfrak e)$
is the same as that of $\rho_{\underline\mu}(\overline g_i^X)$ with respect
to the basis $\mathfrak e$. For $h\in H_0$, we denote by
$A_{\underline\mu}(h)=(a_{ij}(h))_{ij}$ 
the matrix of $\rho_{\underline\mu}(h)$ with respect to the basis
$\mathfrak e$. In particular, for $i_0,\ldots,i_{q-1}$,
if we decompose $g_1^X\cdot
e_{i_1}\otimes\cdots\otimes g_{q-1}^X\cdot e_{i_{q-1}}\otimes
g_0^X\cdot e_{i_0}$ with respect to the basis $\{e_{i_0}\otimes\cdots
\otimes e_{i_{q-1}}\}$, then its coefficient in the $e_{i_0}\otimes\cdots\otimes
e_{i_{q-1}}$-coordinate is $$a_{i_0 i_1}(\overline
g_1^X)\cdots a_{i_{q-2} i_{q-1}}(\overline
g_{q-1}^X)\, a_{i_{q-1} i_0}(\overline
g_0^X).$$    

Write $M_{\ula}''\circ\rho''_{\underline\lambda}(g)(\mathfrak u)=\sum_{v\in\mathfrak
b''_{\underline\lambda}} a_v\,v$. Thus, if $a_{\mathfrak u}\neq 0$, then
$m_{\ula}''(X_g)=X$ and, in this case, one has
$$a_{\mathfrak u}= \left(
\prod_{i=0}^{q-1}
\alpha_{r'}^{db_{\ula}b'i}(g_i^X)\right)
a_{i_0 i_1}(\overline
g_1^X)\cdots a_{i_{q-2} i_{q-1}}(\overline
g_{q-1}^X)\, a_{i_{q-1} i_0}(\overline
g_0^X).
$$
Furthermore, note that
$\sum_{i_1,\ldots i_{q-1}}
a_{i_0 i_1}(\overline
g_1^X)\cdots a_{i_{q-2} i_{q-1}}(\overline
g_{q-1}^X)\, a_{i_{q-1} i_0}(\overline
g_0^X)$ is the coefficient $(i_0,i_0)$ of the matrix
$$A_{\underline\mu}(\overline g_1^X)\cdots A_{\underline\mu}(\overline
g_{q-1}^X) A_{\underline\mu}(\overline
g_0^X)=A_{\underline\mu}(\overline g_1^X\cdots \overline
g_{q-1}^X\overline g_0^X),$$
because $\rho_{\underline\mu}$ is a representation of $H_0$. 
It follows that 
\begin{eqnarray*}
\sum_{i_0,\ldots,i_{q-1}}a_{i_0 i_1}(\overline
g_1^X)\cdots a_{i_{q-2} i_{q-1}}(\overline
g_{q-1}^X)\, a_{i_{q-1} i_0}(\overline
g_0^X)&=&\sum_{i_0}a_{i_0 i_0}(\overline g_1^X\cdots \overline
g_{q-1}^X\overline g_0^X)\\
&=&\chi_{\underline\mu}(\overline g_1^X\cdots \overline
g_{q-1}^X\overline g_0^X)\\
&=&\chi_{\underline\mu}(\overline g_0^X\cdots
\overline g_{q-1}^X).
\end{eqnarray*}
The result follows.
\end{proof}
\bigskip

\begin{lemma}
\label{lem:repsatble}
We keep the notation of Lemma~\ref{lem:valcar1}. 
Let $g=(z;\sigma)$ with $z\in\mathcal
U_{de}^r$ and $\sigma\in\sym_r$. Write $\sigma=\sigma_1\cdots\sigma_s$
the cycle decomposition with disjoint support of $\sigma$. Assume that
$\sigma_i$ has length $\ell_i$, 
and that, for $1\leq j\leq s$, 
\begin{equation}
\label{eq:choixsupport}
\sigma_{j}=(L_j+1\ \cdots \ L_j+\ell_j),
\end{equation}

where $L_1=0$ and $L_j=\ell_1+\cdots +\ell_{j-1}$.
Let $X=(X_0,\ldots,X_{q-1})\in \mathcal X_{(r',\ldots,r')}$, and, for
$0\leq i\leq q-1$, write
$$X_i=\{x_{i,1},\ldots,x_{i,r'}\}\quad\text{with}\
x_{i,1}<\cdots<x_{i,r'}.$$
If
$m_{\ula}''(X_g)=X$, then $\ell_j$ is divisible by $q$ for all $1\leq i\leq
s$. Let $1\leq j\leq s$. Write $l_j=ql'_j$ and $L_j=qL'_j$. Then
there is $0\leq i_0 \leq q-1$ such that $x_{i_0,1}=L_j+1$, and, for all
$1\leq k\leq \ell'_j$ and $0\leq l\leq q-1$, we have
$$x_{i_0-l,L'_j+k}=L_{j}+(k-1)q+l+1,$$
where $i_0-l$ is taken modulo $q$.
\end{lemma}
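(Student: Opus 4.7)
The plan is to use Lemma~\ref{lem:rep} and Proposition~\ref{prop:isopordre} to turn the hypothesis $m_{\ula}''(X_g)=X$ into an explicit statement about how $\sigma$ acts on the tuple $(X_0,\ldots,X_{q-1})$, and then to track the orbits of the cycles $\sigma_j$ to pin down the elements $x_{i,k}$.

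First I would apply Lemma~\ref{lem:rep} to the composition $(r',\ldots,r')$ of $r$ into $q$ parts. Since $t_X(E_i')=X_i$, the outer factor in the decomposition of $\sigma t_X$ with respect to $\sym_{r'}\times\cdots\times\sym_{r'}$ is $t_Y$ with $Y_i=(\sigma t_X)(E_i')=\sigma(X_i)$. Pushing the vector $z\in\mathcal U_{de}^r$ across $t_Y$ via Relation~(\ref{eq:conjbase}) yields $gt_X=t_Y\cdot h$ with $h\in H_0\times\cdots\times H_{q-1}$, and by uniqueness of the coset representative we get $(X_g)_i=\sigma(X_i)$. The condition $m_{\ula}''(X_g)=X$ then reads $\sigma(X_i)=X_{i-1}$ (indices modulo $q$), i.e.\ $\sigma$ permutes the parts $X_0,\ldots,X_{q-1}$ by a single cyclic shift.

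Next I would deduce divisibility of the cycle lengths. For any $x$ in the support of $\sigma_j$ with $x\in X_{i_0}$, the relation above gives $\sigma^m(x)\in X_{i_0-m\bmod q}$, so the orbit of $x$ intersects each $X_i$ in the same number of elements; summing these cardinalities to $\ell_j$ forces $q\mid\ell_j$ and shows that the orbit contributes $\ell_j'=\ell_j/q$ elements to each $X_i$.

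Finally I would locate these elements precisely inside each $X_i$. Define $i_0$ by $L_j+1\in X_{i_0}$; since $\sigma_j=(L_j+1,\ldots,L_j+\ell_j)$ increments by one, we obtain $L_j+1+m\in X_{i_0-m\bmod q}$ for all $0\leq m\leq\ell_j-1$. Hence the cycle's contribution to $X_{i_0-l}$ is the arithmetic progression $\{L_j+l+1+kq\mid 0\leq k\leq\ell_j'-1\}$, already sorted in increasing order. Because the supports of $\sigma_1,\ldots,\sigma_s$ partition $\{1,\ldots,r\}$ into the consecutive intervals $[L_1+1,L_2],\ldots,[L_s+1,r]$ and each cycle $\sigma_{j'}$ contributes exactly $\ell_{j'}'$ elements to $X_{i_0-l}$, the elements from $\sigma_j$ occupy the positions $L_j'+1,\ldots,L_j'+\ell_j'$ in $X_{i_0-l}$, where $L_j'=\ell_1'+\cdots+\ell_{j-1}'=L_j/q$. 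Listing them in increasing order yields $x_{i_0-l,L_j'+k}=L_j+(k-1)q+l+1$ (with the case $k=1$, $l=0$ recovering the claim on $L_j+1\in X_{i_0}$). The main obstacle is the careful bookkeeping of the indices modulo $q$, in particular translating the cyclic shift $\sigma(X_i)=X_{i-1}$ into the correct identification of positions $L_j'+k$ from the consecutive-interval structure of the cycle supports.
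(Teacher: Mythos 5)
Your proof follows essentially the same route as the paper's: interpret $m_{\ula}''(X_g)=X$ as a cyclic shift $\sigma(X_i)=X_{i-1}$ (you additionally spell out, via Lemma~\ref{lem:rep}, why $(X_g)_i=\sigma(X_i)$, which the paper takes for granted), then iterate to get $L_j+1+m\in X_{i_0-m}$, deduce $q\mid\ell_j$, and read off the positions. The one place you are looser than the paper is the divisibility step: you assert that ``the orbit of $x$ intersects each $X_i$ in the same number of elements,'' but this is a \emph{consequence} of $q\mid\ell_j$, not something that follows directly from $\sigma^m(x)\in X_{i_0-m}$. The paper closes the cycle explicitly: since $\sigma^{\ell_j}(L_j+1)=L_j+1$ and $\sigma^{\ell_j}(L_j+1)\in X_{i_0-\ell_j \bmod q}$ while $L_j+1\in X_{i_0}$, disjointness of the $X_i$'s forces $i_0-\ell_j\equiv i_0\pmod q$. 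You should add this one sentence to make the divisibility inference airtight. Your final bookkeeping (consecutive supports, each $\sigma_{j'}$ contributes $\ell'_{j'}$ to every $X_i$, hence the $\sigma_j$-elements sit at positions $L'_j+1,\ldots,L'_j+\ell'_j$) is actually more explicit than the paper, which dispatches this with ``the result now follows from (\ref{eq:permuteX}),'' and you correctly recover $x_{i_0,L'_j+1}=L_j+1$ rather than the paper's misprinted $x_{i_0,1}=L_j+1$.
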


\begin{proof}
Assume that $m_{\ula}''(X_g)=X$. We have
$X_g=(\sigma(X_0),\ldots,\sigma(X_{q-1}))$. Thus, for all $i\geq
0$, one has
$$\sigma(X_{i+1})=X_i,$$
where the indices are taken modulo $q$. 
Let $1\leq j\leq s$. Assume $L_j+1\in X_{i_0}$ for some $0\leq i_0\leq
q-1$. Now, we prove by induction on $l$ that 
\begin{equation}
\label{eq:permuteX}
L_j+l\in X_{i_0-l+1}.
\end{equation}
Indeed, it is true for $l=1$ and, if we assume it holds for some $l\geq 1$, 
then one has 
$$L_j+l+1=\sigma(L_j+l)\in \sigma(X_{i_0-l+1})=X_{i_0-l},$$
as required. In particular, $L_j+\ell_j\in X_{i_0-\ell_j+1}$. However,
$\sigma(L_j+\ell_j)=L_j+1$, hence
$X_{i_0-\ell_j}=\sigma(X_{i_0-\ell_j+1})=X_{i_0}$ and 
$i_0-\ell_j\equiv i_0\mod q$, that is $q$ divides $\ell_j$. The result
now follows from Relation~(\ref{eq:permuteX}).
\end{proof}
\smallskip

\begin{remark}
\label{rk:reciproquetabnb}
In fact, for all $1\leq j\leq s$, the position of $L_j+1$ completely
determines the integer $x_{i,L'_j+k}$ for $0\leq i\leq q-1$ and $1\leq
k\leq \ell_j'$. Since there are $q$ choices for the place of $L_j+1$, we
deduce that the number of $X\in\mathcal X_{(r',\ldots,r')}$ such that
$m_{\ula}''(X_g)=X$ is $q^{s}$. 
\end{remark}
\smallskip

Recall that the conjugacy classes of $G$ are labeled by
$\mathcal{MP}_{de,r}$ as follows. Let $g=(z;\sigma)\in G$ be with
$z=(z_1,\ldots,z_r)\in\mathcal U_{de}^r$ and $\sigma\in \sym_r$ with
disjoint cycle decomposition $\sigma_1\cdots\sigma_s$. For $1\leq j\leq
s$, write $\widetilde\sigma_j=(z_{(j)};\sigma_j)$ where $z_{{(j)}\,k}=z_k$
if $k$ lies in the support of $\sigma_j$, and $z_{(j)\,k}=1$ otherwise.
The cycle product $\mathfrak c(\widetilde\sigma_j)$ of
$\widetilde\sigma_j$ is then defined to be $\prod_k z_{(j)\,k}$.
Now, we associate to $g$ the multi-partition
$\underline\eta=(\eta_0,\ldots,\eta_{de-1})\in\mathcal{MP}_{r,de}$,
called the \emph{cyclic structure} $\mathfrak c(g)$ of $g$, in such a way
that, for all $1\leq j\leq s$, $\eta_{u}$ has a part of length
$|\sigma_j|$ if and only if $\mathfrak c(\widetilde\sigma_j)=\zeta^u$,
where $\zeta$ is a generator of $\mathcal U_{de}$.
Then two elements $g$ and $g'$ of $G$ are conjugate if and only if
$\mathfrak c(g)=\mathfrak c(g')$.
\smallskip

\begin{convention}
\label{conv:defrep}
Now, for any
$\underline\eta=(\eta_0,\ldots,\,\eta_{de-1})\in\mathcal{MP}_{r,de}$, we
choose as representative for the class of $G$ labeled by
$\underline\eta$ the element $g_{\underline\eta}=(z;\sigma)$, where the
cycles of $g_{\underline\eta}$ are as in~({\ref{eq:choixsupport}}), and,
if $\sigma_j=(L_j+1\cdots L_j+\ell_j)$ is a cycle of $\sigma$ such that
$\mathfrak{c}(\widetilde{\sigma}_j)=\zeta^u$, then $z_{(j)\, k}=1$ if
$k\neq L_j+1$, and $z_{(j)\, L_j+1}=\zeta^u$. 
\end{convention}
\smallskip

For $r\in\N$, we denote by $\mathcal P_r$ the set of partitions of $r$, and we let $\mathcal P= \bigcup_{r \in \N} \cal P_r$. For any $\pi=(\pi_1,\ldots,\pi_t)\in\mathcal P$ and any positive integer $q$, we let
\begin{equation}
\label{eq:multiplepart}
q\star\pi=(q\pi_1,\ldots,q\pi_t)\in\mathcal P.
\end{equation}
Furthermore, we write $\ell(\pi)=t$, and, for
$\underline\eta=(\eta_0,\ldots,\eta_{de-1})\in\mathcal{MP}_{r,de}$, we
set $\ell(\underline\eta)=\sum\ell(\eta_u)$. Note that, if
$g=(z;\sigma)$ with $\sigma=\sigma_1 \cdots\sigma_s$ has cyclic
structure $\underline\eta$, then $s=\ell(\underline\eta)$.
\smallskip

\begin{theorem}
\label{th:valcar2}
We keep the notation as in Lemma~\ref{lem:valcar1}. Let $\ueta=(\eta_0,\ldots,\eta_{de-1})\in\mathcal{MP}_{r,de}$ and
$g_{\underline\eta}=(z;\sigma)$ with $z=(z_1,\ldots,z_r)\in\mathcal
U_{de}^r$ and $\sigma=\sigma_1\cdots\sigma_s\in\sym_r$ be as in 
Convention~\ref{conv:defrep}. 
For any $1\leq j\leq s$, write $\xi_j=\mathfrak c(\widetilde\sigma_j)$.
\begin{enumerate}[(i)]
\item If there is $1\leq u\leq de-1$ such that 
$\eta_u\notin q\star\mathcal P$, or if there is $1\leq j\leq s$ such that
$\xi_j\notin\mathcal U_{db_{\ula}b'}$ then,
$\Delta_{\underline\lambda,b'}(g_{\underline\eta})=0$.
\item Assume $\eta_u=q\star\eta'_u$ for all $0\leq u\leq de-1$ and
$\xi_j\in\mathcal U_{db_{\ula}b'}$ for all $1\leq j\leq s$.
Then
$$\Delta_{\underline\lambda,b'}(g_{\underline\eta})=q^{\ell(\underline\eta)}\chi_{\underline\mu}(g'_{\underline\eta}),$$
where $g_{\underline\eta}'\in H_0$ has cyclic structure $(\eta'_0,\ldots,\,\eta'_{de-1})$ and $\umu \in {\cal MP}_{r',de/q}$ is as in Lemma \ref{lem:valcar1}.  
\end{enumerate}
\end{theorem}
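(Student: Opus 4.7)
The plan is to apply Lemma \ref{lem:valcar1} directly to $g_\ueta$ and simplify the resulting sum. That lemma expresses $\Delta_{\ula,b'}(g_\ueta)$ as a sum over $X\in\mathcal{X}_{(r',\ldots,r')}$ with $m_\ula''(X_g)=X$, so the argument splits into two tasks: parameterizing such $X$, and computing the summand $\bigl(\prod_i\alpha_{r'}^{db_\ula b' i}(g_i^X)\bigr)\chi_\umu(\bar g_0^X\cdots\bar g_{q-1}^X)$ in closed form.

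For the parameterization, Lemma \ref{lem:repsatble} already shows that the condition $m_\ula''(X_g)=X$ forces $q\mid\ell_j$ for every cycle length, which is exactly the condition $\eta_u\in q\star\mathcal P$ for all $u$. If this fails the sum is empty, giving the first half of (i). When it holds, Remark \ref{rk:reciproquetabnb} tells us there are exactly $q^s=q^{\ell(\ueta)}$ valid $X$'s, indexed by a tuple $(i_0^{(1)},\ldots,i_0^{(s)})\in\{0,\ldots,q-1\}^s$ recording where the minimum element of each cycle lands.

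Next, for a fixed such $X$ I would compute $g_i^X$ explicitly from $g_\ueta t_X=t_{X_g}g_0^X\cdots g_{q-1}^X$, using Convention \ref{conv:defrep} together with the explicit positions supplied by Lemma \ref{lem:repsatble}. A cycle $\sigma_j=(L_j+1\ \cdots\ L_j+\ell_j)$ of length $q\ell'_j$ carrying weight $\xi_j=\zeta^{u_j}$ then contributes, inside each of the $q$ factors $H_i$, a cycle of length $\ell'_j$ (on the block determined by $X_i$), with the nontrivial weight $\zeta^{u_j}$ appearing in the single factor $g_{i_0^{(j)}}^X$. Applying the isomorphisms $\tau_i$ and multiplying, $\bar g_0^X\cdots\bar g_{q-1}^X\in H_0=G(de,1,r')$ is then an element of the same conjugacy class as $g'_\ueta$ (up to cyclic reordering of the product, which is immaterial since $\chi_\umu(AB)=\chi_\umu(BA)$). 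So $\chi_\umu(\bar g_0^X\cdots\bar g_{q-1}^X)=\chi_\umu(g'_\ueta)$, independently of the choices $(i_0^{(j)})$.

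It then remains to sum the weight factors over $X$. Because each cycle $\sigma_j$ deposits its only nontrivial weight $\zeta^{u_j}$ in the single factor $g_{i_0^{(j)}}^X$, the product separates as $\prod_j\zeta^{u_j db_\ula b' i_0^{(j)}}$, and summing over the $q^s$ tuples yields
$$\prod_{j=1}^s\left(\sum_{i_0=0}^{q-1}\zeta^{u_j db_\ula b' i_0}\right).$$
Since $\zeta^{db_\ula b'}$ has order $q=e/(b_\ula b')$, each inner sum equals $q$ if $\xi_j\in\mathcal U_{db_\ula b'}$ and $0$ otherwise. This gives the second half of (i), and in case (ii) produces the factor $q^s=q^{\ell(\ueta)}$ multiplying $\chi_\umu(g'_\ueta)$. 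The main obstacle is the explicit bookkeeping for $g_i^X$: one must carefully track how the cycle structure and the weights $z_k$ are redistributed under conjugation by $t_X$ for each of the $q^s$ choices given by Lemma \ref{lem:repsatble}, and verify the clean separation into the weight-factor and character-factor described above.
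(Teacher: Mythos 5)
Your proposal is correct and follows essentially the same route as the paper: Lemma \ref{lem:valcar1} reduces the character value to a sum over valid $X$, Lemma \ref{lem:repsatble} forces $q$ to divide every cycle length and parameterizes the $q^s$ valid $X$'s by $(i_0^{(1)},\ldots,i_0^{(s)})\in\{0,\ldots,q-1\}^s$, and the factor $\chi_\umu(\bar g_0^X\cdots\bar g_{q-1}^X)$ is constant over this parameter set, leaving a sum of weight characters. The one place where you diverge, slightly to your advantage, is in evaluating that sum: you observe directly that the linear character $\prod_i\alpha_{r'}^{db_\ula b'i}(g_i^X)$ is multiplicative over the cycles, so the sum over all $q^s$ tuples factors immediately as $\prod_{j=1}^s\bigl(\sum_{i_0=0}^{q-1}\xi_j^{db_\ula b'i_0}\bigr)$, whereas the paper isolates one cycle $\sigma_j$ at a time, writes $g_i^X=\widetilde\sigma_i^{'X}h_i^X$ for an auxiliary element $h_i^X$, and argues the inner sum $J$ is independent of $i_j$. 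Both yield the same geometric sums and the same vanishing criterion $\xi_j\notin\mathcal U_{db_\ula b'}$; your direct factoring is a bit cleaner but is substantively the same computation.
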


\begin{proof}
If there is $1\leq u\leq de-1$ such that $q$ does not divide $|\eta_u|$,
then $\sigma$ has a cycle of length not divisible by $q$. By
Lemma~\ref{lem:repsatble}, there are no $X\in\mathcal
X_{(r',\ldots,r')}$ such that $m_{\ula}''(X_{g_{\underline\eta}})=X$, thus
$\Delta_{\underline\lambda,b'}(g_{\underline\eta})=0$ by
Lemma~\ref{lem:thetamap}, proving the first part of (i).

Denote by $\mathfrak X$ the set of $X\in \mathcal
X_{(r',\ldots,r')}$ such that $m_{\ula}''(X_{g_{\underline\eta}})=X$.
Set $Q=\{0,\ldots,q-1\}$ and, for $\underline i=(i_1,\ldots,i_s)\in Q^s$,
define $\mathfrak X_{\underline i}$ to be the set of
$(X_0,\ldots,X_{q-1})\in\mathfrak X$
such that, for all $1\leq j\leq s$, the integer $L_j+1$ lies in $X_{i_j}$.

Let $X\in\mathfrak X$. Then there is a unique 
$\underline i\in Q^s$ such that $X\in\mathfrak X_{\underline i}$. For $1\leq j\leq s$, write $q\ell'_j$ 
for  the length of $\sigma_j$. Then 
$\sigma_j t_X=t_{X_{\sigma_j}}\sigma'_j$, where 
\begin{equation}
\label{eq:defsigmaprime}
\sigma'_j= (C_{i_j}+L'_j+\ell'_j\ C_{i_j}+L'_j +\ell'_j-1\ \cdots\
C_{i_j}+L'_j+1).
\end{equation}
Furthermore, Relation~(\ref{eq:decrepind}) gives 
$z_{(j)}t_{X_{\sigma_j}}=t_{X_{\sigma_j}}z'$, where
$z'=(z'_1,\ldots,z'_k)\in \mathcal U_{de}^r$ is such that
$z'_{C_{i_j}+L'_j+1}=z_{(j)\,
L_j+1}$ and $z'_{k}=1$ otherwise. 
So, if we set $\widetilde{\sigma}_k^{'X}=1$ if $k\neq i_j$ and 
$\widetilde\sigma_{i_j}^{'X}=z''\sigma_j'$, where
$z''=(z''_1,\ldots,z''_{r'})\in\mathcal U_{de}^{r'}$ is such that
$z''_{L'_j+1}=z'_{C_{i_j}+L'_j+1}$ and $z''_k=1$ otherwise, then
\begin{equation}
\label{eq:deccycle}
\widetilde\sigma_j
t_X=t_{X_{\sigma_j}}\widetilde\sigma_0^{'X}\cdots\widetilde\sigma_{q-1}^{'X},
\end{equation}
where $\widetilde\sigma_k^{'X}\in H_k$. Note that $\mathfrak
c(\widetilde\sigma_{i_j}^{'X})=\mathfrak c(\widetilde\sigma_j)$.
Now, Lemma~\ref{lem:repsatble}
implies that $t_{X_g}=t_{X_{\sigma_1}}\cdots t_{X_{\sigma_s}}$, so that
applying Relation~(\ref{eq:deccycle}) iteratively
to the cycles of $g_{\underline\eta}$, we obtain
$$g_{\underline\eta} t_X=t_{X_g}g_0^X\cdots g_{q-1}^X,$$
where $$g_{k}^X=\prod_{\{1\leq j\leq s\,
|\,i_j=k\}}\widetilde\sigma_{j}^{'X}\in H_k\quad\text{for all}\ 0\leq
k\leq q-1.$$ 
By Relation~(\ref{eq:defsigmaprime}), the cycles
$\widetilde\sigma_j^{'X}$ have disjoint support, and
$$\overline{\sigma}'_j=(L'_j+\ell'_j\ \cdots L'_j+1).
$$
Hence, the element $g'_{\underline\eta}=\overline g_0^X\cdots\overline
g_{q-1}^X\in H_0$ has cyclic structure
$\underline\eta'=(\eta_0',\ldots,\,\eta_{de-1}')$ and does not
dependent on $X\in\mathfrak X$. Therefore, Lemma~\ref{lem:valcar1}
implies
\begin{equation}
\label{eq:inter1}
\Delta_{\underline\lambda,b'}(g_{\underline\eta})=\chi_{\underline\mu}(g'_{\underline\eta})
\sum_{X\in\mathfrak X}\prod_{i=0}^{q-1}\alpha_{r'}^{db_{\ula}b'i}(g_i^X).
\end{equation}

Let $1\leq j\leq s$ and take $X\in \mathfrak X$. 
For $0\leq k\leq q-1$, write $h_k^X\in H_k$ such
that $\widetilde\sigma_j^{-1}g
t_X=t_{X_{\widetilde\sigma_j^{-1}g}h_0^X\cdots h_{q-1}^X}$. 
We denote by $\mathfrak X_{i}$ the set of $X\in\mathfrak
X_{(r',\ldots,r')}$ such that $L_j+1\in X_{i}$. 
If $X\in\mathfrak X_i$, then $g_k^X=h_k^X$ for $k\neq i$ and 
$g_i^X=\widetilde\sigma_i^{'X}h_{i}^X$, 
by Lemma~\ref{lem:repsatble} and Relation~(\ref{eq:defsigmaprime}), and
it follows that
%
$$\prod_{k=0}^{q-1}\alpha_{r'}^{db_{\ula}b'k}(g_k^X)=\alpha_{r'}^{db_{\ula}b'i}(\widetilde\sigma_{j}^{'X})\prod_{k=0}^{q-1}\alpha_{r'}^{db_{\ula}b'k}(h_k^X)
=\xi_j^{db_{\ula}b'i}\prod_{k=0}^{q-1}\alpha_{r'}^{db_{\ula}b'k}(h_k^X).$$
Note that $J=\sum_{X \in\mathfrak
X_i}\prod_{k=0}^{q-1}\alpha_{r'}^{db_{\ula}b'k}(h_k^{X})$ does not depend on
$i\in Q$. Hence, we obtain
\begin{eqnarray*}
\sum_{X\in\mathfrak
X}\prod_{k=0}^{q-1}\alpha_{r'}^{db_{\ula}b'k}(g_k^X)&=&
\sum_{i_j=0}^{q-1}\sum_{X\in \mathfrak
X_{i_j}}\prod_{k=0}^{q-1}\alpha_{r'}^{db_{\ula}b'k}(g_k^X)\\
&=&\sum_{i_j=0}^{q-1}\sum_{X\in \mathfrak
X_{i_j}}\xi_j^{db_{\ula}b'i_j}\prod_{k=0}^{q-1}\alpha_{r'}^{db_{\ula}b'k}(h_k^X)\\
&=&\sum_{i_j=0}^{q-1}\left(\xi_j^{db_{\ula}b'i_j}\sum_{X \in\mathfrak X_{i_j}}
\prod_{k=0}^{q-1}\alpha_{r'}^{db_{\ula}b'i}(h_k^{X})\right)\\
&=&J \sum_{i_j=0}^{q-1}\xi_j^{db_{\ula}b'i_j}.
\end{eqnarray*}
Now, if $\xi_j\notin\mathcal U_{db_{\ula}b'}$, then 
$$\sum_{i_j=0}^{q-1}\xi_j^{db_{\ula}b'i_j}=\frac{\xi_j^{db_{\ula}b'q}-1}{\xi_j^{db_{\ula}b'}-1}=0,$$
since $db_{\ula}b'q=de$ and $\xi_j\in\mathcal U_{de}$. This concludes the proof of (i). If, on the other hand,
$\xi_j\in\mathcal U_{db_{\ula}b'}$ for all $1\leq j\leq s$, then
$\alpha_{r'}^{db_{\ula}b'i}(g_i^X)=1$ for all $i$. Equation~(\ref{eq:inter1}) now gives 
$\Delta_{\underline\lambda,b'}(g_{\underline\eta})=
\chi_{\underline\mu}(g'_{\underline\eta})|\mathfrak X|$, and (ii) follows from
Remark~\ref{rk:reciproquetabnb} since $s=\ell(\underline\eta)$.
\end{proof}
\smallskip

\begin{remark}
\label{rk:valeurs}
Let
$\underline\lambda=(\underline\mu,\ldots,\underline\mu)\in {\cal MP}_{r,de}$, where $\umu \in {\cal MP}_{r',de/q}$ is as in Lemma \ref{lem:valcar1}. Let $\ueta=(\eta_0,\ldots,\eta_{de-1})\in\mathcal{MP}_{r,de}$ and $g_{\underline\eta}=(z;\sigma)$ be as in
Convention~\ref{conv:defrep}. Set $\sigma=\sigma_1\cdots\sigma_s$ and
$\xi_j=\mathfrak c(\widetilde\sigma_j)$. Assume 
that $q$ divides $\eta_j$ or all $1\leq
j\leq de-1$ and that $\xi_j\in\mathcal U_{de/q}$ for all $1\leq
j\leq s$. 
Since $\xi_j\in\mathcal U_{de/q}$ for all $1\leq j\leq s$, we deduce
that $\eta_u\neq\emptyset$ only if
$q$ divides $u$.
Let
$g'_{\underline\eta}$ be the element of $G(de,1,r')$ with cyclic
structure $(\eta_0',\ldots,\eta_{de-1}')$ described in Convention \ref{conv:defrep}, where $\eta_j=q\eta_j'$, and let
$g_{\underline\eta}^{(q)}\in G(de/q,1,r')$ be the element with cyclic
structure $(\eta'_0,\eta'_q,\eta'_{2q},\ldots)$ described in Convention \ref{conv:defrep}. 

Denote by $\widetilde{\chi}_{\underline\mu}$ the irreducible
character of $G(de/q,1,r')$ labeled by $\underline\mu$. 
Following \S\ref{sec:jameskerber}, note that the representation space of
$H_0$ labeled by $(\underline\mu,\emptyset,\ldots,\emptyset)$ and that of $H$ labeled by $\underline\mu$ have the same basis $\mathfrak b$. 
Furthermore,
using the fact
that $\Irr(\mathcal U_{de/q})=\{\alpha^i\downarrow_{\, \mathcal U_{de/q}}\mid 0\leq
i\leq r'\}$, we deduce from~(\ref{eq:decrepind})
and~(\ref{eq:formuleactioninduite}) that the actions of
$g'_{\underline\eta}$ and $g_{\underline\eta}^{(q)}$
on $\mathfrak b$ are the same. In particular, we have
$\chi_{\underline\mu}(g'_{\underline\eta})=\widetilde{\chi}_{\underline\mu}(g_{\underline\eta}^{(q)})$ and
\begin{equation}
\label{eq:valeurs}
\Delta_{\underline\lambda,b'}(g_{\underline\eta})=q^{s}\widetilde{\chi}_{\underline\mu}(g_{\underline\eta}^{(q)}). 
\end{equation}
\end{remark}
\smallskip

\begin{example}
\label{exemple:g616}
Consider $G=G(6,1,6)$ and $N=G(6,3,6)$. Let
$\underline\eta=(\emptyset,\emptyset,\emptyset,(6),\emptyset,\emptyset)$.
Then the representative for the conjugacy class of $G$ labeled by
$\underline\eta$ described in Convention~\ref{conv:defrep} is
$g_{\underline\eta}=(\zeta^3,1,1,1,1,1;\sigma)$, where $\sigma=(1\
2\ 3\ 4\ 5\ 6)$. Let $(\lambda_0,\lambda_1)\in\mathcal{MP}_{2,2}$.
Write
$\underline\lambda=(\lambda_0,\lambda_1,\lambda_0,\lambda_1,\lambda_0,\lambda_1)\in\mathcal{MP}_{6,6}$.
We will compute $\Delta_{\underline\lambda,1}(g_{\underline\eta})$. To
this end, consider the
set $\mathfrak X$ as in the
proof of Theorem~\ref{th:valcar2}. Then Lemma~\ref{lem:repsatble} gives
$\mathfrak
X=\{X_1,\,X_2,\,X_3\}$ where
$X_1=(\{1,4\},\,\{3,6\},\,\{2,5\})$,
$X_2=(\{2,5\},\,\{1,4\},\,\{3,6\})$
 and 
$X_3=(\{3,6\},\,\{2,5\},\,\{1,4\})$.
Furthermore,
\begin{align*}
 g_{\underline\eta}t_{X_1}&=t_{X_1}g_0^{X_1}g_1^{X_1}g_2^{X_1}=t_{X_1}\left((\zeta^3,1,1,1,1,1;(1\ 2)),1,1\right)\\
 g_{\underline\eta}t_{X_2}&=t_{X_2}g_0^{X_2}g_1^{X_2}g_2^{X_2}=t_{X_2}\left(1,(1,1,\zeta^3,1,1,1;(3\ 4)),1\right)\\
 g_{\underline\eta}t_{X_3}&=t_{X_3}g_0^{X_3}g_1^{X_3}g_2^{X_3}=t_{X_3}\left(1,1,(1,1,1,1,\zeta^3,1;(5\
6))\right).
\end{align*}
Note that $$\overline g_0^{X_1}\overline g_1^{X_1}\overline g_2^{X_1}= 
\overline g_0^{X_2}\overline g_1^{X_2}\overline g_2^{X_2}=
\overline g_0^{X_3}\overline g_1^{X_3}\overline
g_2^{X_3}=(\zeta^3,1,1,1,1,1;(1\
2))\in H_0,$$
and can be identifed with the element $(\zeta^3,1;(1\ 2))\in G(2,1,2)$.
We have
$$\Delta_{\underline\lambda,1}(g_{\underline\mu})=3\widetilde{\chi}_{(\lambda_0,\lambda_1)}(\zeta^3,1;(1\
2)),$$
where $\widetilde{\chi}_{(\lambda_0,\lambda_1)}$ is the irreducible
character of $G(2,1,2)$ labeled by $(\lambda_0,\lambda_1)$.
\end{example}
\smallskip

\begin{theorem}
\label{thm:memeordre}
With the notation as above, if $0\leq k\leq |C_{\underline\lambda}|-1$,
then
$\Delta_{\underline\lambda,k}(g_{\ueta})=\Delta_{\underline\lambda,b'}(g_{\ueta})$,
where $1\leq b'\leq |C_{\underline\lambda}|$ is such that
the order of $M_{\ula}^k$ is $|C_{\underline\lambda}|/b'$. 
\end{theorem}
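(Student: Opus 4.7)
I would compute $\Delta_{\underline\lambda,k}(g_\ueta)$ directly, in parallel with the proof of Theorem~\ref{th:valcar2}. Let $q = |C_{\underline\lambda}|/b'$ be the common order of $M_{\ula}^{k}$ and $M_{\ula}^{b'}$. Since $b' = \gcd(k, |C_{\underline\lambda}|)$, I can write $k = b' k'$ with $\gcd(k',q)=1$, so that $M_{\ula}^{k} = (M_{\ula}^{b'})^{k'}$. Proposition~\ref{prop:groupeordre} transports $M_{\ula}^{b'}$ to the shift operator $M_{\ula}''$ on $W''_{\underline\lambda}$ via $f'_{\ula}$; consequently $M_{\ula}^{k}$ corresponds to $(M_{\ula}'')^{k'}$, and
\[
\Delta_{\underline\lambda,k}(g_\ueta) \;=\; \operatorname{Tr}\bigl((M_{\ula}'')^{k'}\circ\rho''_{\underline\lambda}(g_\ueta)\bigr).
\]

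I would then reproduce Lemma~\ref{lem:valcar1}'s matrix--trace computation with $(M_{\ula}'')^{k'}$ in place of $M_{\ula}''$. Three changes occur. First, the sum over $X\in\mathcal X_{(r',\ldots,r')}$ runs over those satisfying $(m_{\ula}'')^{k'}(X_g)=X$; arguing as in Lemma~\ref{lem:repsatble} and using $\gcd(k',q)=1$, this condition still forces the divisibility $q\mid\ell_j$ for every cycle $\sigma_j$ of $\sigma$ and produces the same count $q^{\ell(\ueta)}$ of solutions, parametrised by the starting-position tuple $(i_1,\ldots,i_s)\in Q^s$. Second, the character value becomes $\chi_{\underline\mu}\bigl(\overline g_0^X\,\overline g_{k'}^X\,\overline g_{2k'}^X\cdots\overline g_{(q-1)k'}^X\bigr)$, with indices mod $q$. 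Third, the scalar factor $\prod_i\alpha_{r'}^{db_{\ula}b'i}(g_i^X)=\prod_j\xi_j^{db_{\ula}b'i_j}$ retains the same form, and its summation via the parametrisation by $(i_1,\ldots,i_s)$ yields $\prod_j\bigl(\sum_{i_j=0}^{q-1}\xi_j^{db_{\ula}b'i_j}\bigr)$, which vanishes unless $\xi_j\in\mathcal U_{db_{\ula}b'}$ for all $j$, and equals $q^s$ otherwise.

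The observation making the character value insensitive to $k'$ is that the elements $\overline{\widetilde\sigma_j^{'X}}\in H_0$ pairwise commute. Indeed, by Relation~(\ref{eq:defsigmaprime}) the cycles $\widetilde\sigma_j^{'X}$ have pairwise disjoint support, hence after applying the identifications $\tau_{i_j}$ their images in $H_0$ have pairwise disjoint supports in $\{1,\ldots,r'\}$ and therefore commute. Since each $\overline g_i^X = \prod_{j:\,i_j=i}\overline{\widetilde\sigma_j^{'X}}$ is a product of these commuting pieces, and since $\gcd(k',q)=1$ means $\{0,k',2k',\ldots,(q-1)k'\}$ is a permutation of $\{0,\ldots,q-1\}$ modulo $q$, the reordered product collapses to $\prod_{j=1}^{s}\overline{\widetilde\sigma_j^{'X}}=g'_\ueta$, independently of $X$. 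Combining this with the two other points shows that $\Delta_{\underline\lambda,k}(g_\ueta)$ and $\Delta_{\underline\lambda,b'}(g_\ueta)$ both evaluate to $q^{\ell(\ueta)}\chi_{\underline\mu}(g'_\ueta)$ or to $0$ under exactly the same vanishing criteria, proving the equality.

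The main obstacle is the bookkeeping in redoing Lemma~\ref{lem:valcar1}'s computation for the shift-by-$k'$ operator, in particular tracking how $(m_{\ula}'')^{k'}$ permutes the coordinates of $X$ and the tensor factors of $W''_{\underline\lambda}$, so that the scalar factor and the character factor separate as cleanly as in the $k'=1$ case treated in Lemma~\ref{lem:valcar1}.
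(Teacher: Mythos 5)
Your proposal is correct and follows essentially the same route as the paper: both write $M_{\ula}^k=(M_{\ula}^{b'})^{k'}$ with $\gcd(k',q)=1$, transport it via Proposition~\ref{prop:groupeordre} to the shift-by-$k'$ operator on $W''_{\underline\lambda}$, and then compare the resulting fixed-point sums, using coprimality to get the same divisibility condition and count $q^{\ell(\ueta)}$, and the disjoint supports of the cycles $\widetilde\sigma_j^{'X}$ to see that the product $\overline g_0^X\cdots\overline g_{q-1}^X$ is insensitive to the reordering. The only (cosmetic) difference is that the paper matches summands through an explicit bijection $(X_0,\ldots,X_{q-1})\mapsto(X_0,X_{t},\ldots,X_{(q-1)t})$ between the fixed-point sets of $m_{\ula}''$ and $m_{\ula}''^{t}$, whereas you re-evaluate the second sum from scratch and observe it yields the same closed form.
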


\begin{proof} Write $q=|C_{\underline\lambda}|/b'$.
First, we remark that the matrix $M_{\ula}^k$ has order
$q$ if and only if $M_{\ula}^k$ is a generator of the
cyclic group $\langle M_{\ula}^{b'}\rangle$. In particular, there is an integer $1\leq
t\leq q$ coprime to $q$
such that $M_{\ula}^k=M_{\ula}^{b't}$. Now, using 
Proposition~\ref{prop:groupeordre}, we deduce that $M_{\ula}''^s\circ
f_{\ula}'=f_{\ula}'\circ M_{\ula}^k$. Let $g=(z;\sigma)\in G(de,1,r)$ be such that
$\sigma=\sigma_1\cdots\sigma_s$. 
Write $\mathcal Y$ for the set of $Y\in\mathfrak
X_{r',\ldots,r'}$ such that $m_{\ula}''^{t}(Y_g)=Y$.
Now, if $gt_Y=t_{Y_g}g_0^Y\cdots g_{q-1}^Y$, then 
we derive from the proof of Lemma~\ref{lem:valcar1}
that 
\begin{equation}
\label{eq:memeordre}
\Delta_{\underline\lambda,k}(g)=\sum_{Y\in\mathcal Y}\left(
\prod_{j=0}^{q-1}
\alpha_{r'}^{db_{\ula}b'j}(g_j^Y)\right)\chi_{\underline\mu}(\overline{g}_s^Y
\cdots \overline{g}_{s+q-1}^Y),
\end{equation}
where the indices are taken modulo $q$. Furthermore, using the fact that
$\chi_{\underline\mu}$ is a trace, we obtain
$\chi_{\underline\mu}(\overline{g}_s^Y
\cdots
\overline{g}_{s+q-1}^Y)=\chi_{\underline\mu}(\overline{g}_0^Y
\cdots \overline{g}_{q-1}^Y)$. 

Define $f:\mathfrak X\rightarrow \mathcal
Y,\,(X_0,\ldots,X_{q-1})\mapsto (X_0,X_t,\ldots,X_{(q-1)t})$ where the
indices are taken modulo $q$. The map $f$ is well defined because $t$ is
coprime to $q$, whence $j\mapsto jt$ is a bijection of $\Z/q\Z$, and, if
$X\in\mathfrak X$, then, for all $0\leq j\leq q-1$,
$$m_{\ula}''^t(\sigma(X_{jt}))=\sigma(X_{(j+1)t})=\sigma(f(X_{j+1}))=f(\sigma(X_{j+1}))=f(X_j)=X_{jt}.$$
Furthermore, $f$ is bijective since $t$ is coprime to $q$.
Let $X\in \mathfrak X$ and $1\leq j\leq s$. Write $Y=f(X)$. Then $L_j+1\in X_{i_j}$ if
and only if $L_j+1\in Y_{i_jt}$. In particular,
$\{\widetilde{\sigma}_0'^Y,\ldots,\widetilde{\sigma}_{q-1}'^Y\}$ is a
permutation of
$\{\widetilde{\sigma}_0'^X,\ldots,\widetilde{\sigma}_{q-1}'^X\}$ (for
the notation, we refer to the proof of Theorem~\ref{th:valcar2}). Hence,
$\overline g_0^X\cdots\overline g_{q-1}^X$ and $\overline
g_0^Y\cdots\overline g_{q-1}^Y$ have the same cyclic structure, that does
not depend on $X$ and $Y$. We can now conclude as in the end of the proof of
Theorem~\ref{th:valcar2}.
\end{proof}

\begin{example}
We continue with Example~\ref{exemple:g616}.
We will now compute $\Delta_{\underline\lambda,2}(g_{\underline\eta})$.
We consider $\mathcal Y$ and $f:\mathfrak
X\rightarrow\mathcal Y$ as in the
proof of Theorem~\ref{thm:memeordre}. 

Then
$\mathcal
Y=\{Y_1,\,Y_2,\,Y_3\}$ where
$Y_1=f(X_1)=(\{1,4\},\,\{2,5\},\,\{3,6\})$,
$Y_2=f(X_2)=(\{2,5\},\,\{3,6\},\,\{1,4\})$
 and 
$Y_3=f(X_3)=(\{3,6\},\,\{1,4\},\,\{2,5\})$.
We have
\begin{align*}
 g_{\underline\eta}t_{Y_1}&=t_{Y_1}g_0^{Y_1}g_1^{Y_1}g_2^{Y_1}=t_{Y_1}\left((\zeta^3,1,1,1,1,1;(1\ 2)),1,1\right)\\
 g_{\underline\eta}t_{Y_2}&=t_{Y_2}g_0^{Y_2}g_1^{Y_2}g_2^{Y_2}=t_{Y_2}\left(1,1,(1,1,1,1,\zeta^3,1;(5\
6))\right)\\
 g_{\underline\eta}t_{Y_3}&=t_{Y_3}g_0^{Y_3}g_1^{Y_3}g_2^{Y_3}=t_{Y_3}\left(1,(1,1,\zeta^3,1,1,1;(3\
4)),1\right).
\end{align*}
We again have, for all $1\leq i,\,j\leq 3$
$$\overline g_0^{Y_i}\overline
g_1^{Y_i}\overline g_2^{Y_i}= 
\overline g_0^{X_j}\overline g_1^{X_j}\overline g_2^{X_j}=
(\zeta^3,1,1,1,1,1;(1\
2))\in H_0,$$
and
$$\Delta_{\underline\lambda,1}(g_{\underline\mu})=3\widetilde{\chi}_{(\lambda_0,\lambda_1)}(\zeta^3,1;(1\
2)),$$
as required.
\end{example}
\smallskip

\begin{proposition}
\label{prop:conjvaleurs}
Let $0\leq k\leq |C_{\underline\lambda}|-1$. For any $g\in G(de,1,r)$ and
$x\in G(de,e,r)$, we
have
$$\Delta_{\ula,k}({}^g
x)=\e^{kb_{\underline\lambda}}(g)\Delta_{\ula,k}(x).$$
\end{proposition}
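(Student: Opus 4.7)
The strategy is to exploit the intertwining property of $M_{\ula}$ together with the cyclic invariance of the trace. The key identity, built into the very definition of $M_{\ula}$ as an element of $\operatorname{Hom}_G(\rho_{\underline\lambda},\e^{b_{\ula}}\otimes\rho_{\underline\lambda})$, is that for every $g\in G(de,1,r)$,
\[
M_{\ula}\circ\rho_{\underline\lambda}(g)=\e^{b_{\ula}}(g)\,\rho_{\underline\lambda}(g)\circ M_{\ula}.
\]
Iterating this $k$ times (and using that $\e^{b_{\ula}}$ is a linear character, so the scalar factor commutes with everything) yields
\[
M_{\ula}^{k}\circ\rho_{\underline\lambda}(g)=\e^{kb_{\ula}}(g)\,\rho_{\underline\lambda}(g)\circ M_{\ula}^{k}.
\]

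With this in hand, the computation is essentially one line. Starting from the definition and expanding $\rho_{\underline\lambda}({}^g x)=\rho_{\underline\lambda}(g)\,\rho_{\underline\lambda}(x)\,\rho_{\underline\lambda}(g)^{-1}$, I would write
\[
\Delta_{\underline\lambda,k}({}^g x)=\operatorname{Tr}\!\bigl(M_{\ula}^{k}\circ\rho_{\underline\lambda}(g)\,\rho_{\underline\lambda}(x)\,\rho_{\underline\lambda}(g)^{-1}\bigr),
\]
then push $M_{\ula}^{k}$ past $\rho_{\underline\lambda}(g)$ using the intertwining identity above, extracting the scalar $\e^{kb_{\ula}}(g)$, and finally cycle $\rho_{\underline\lambda}(g)$ around through $\rho_{\underline\lambda}(g)^{-1}$ using $\operatorname{Tr}(AB)=\operatorname{Tr}(BA)$. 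What remains inside the trace is $M_{\ula}^{k}\circ\rho_{\underline\lambda}(x)$, i.e.\ $\Delta_{\underline\lambda,k}(x)$.

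There is essentially no obstacle: the statement does not even require $x\in N$ beyond the fact that $\Delta_{\underline\lambda,k}$ is defined as a class-type function on $N$, and ${}^g x$ lies in $N$ because $N\trianglelefteq G$. The only point of care is to verify that $\e^{b_{\ula}}(g)$ can be pulled out as a scalar (it is a complex number, so yes) and that the trace identity is applied to genuine endomorphisms of the finite-dimensional space $W_{\underline\lambda}$, which it is.
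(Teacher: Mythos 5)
Your proof is correct and follows essentially the same route as the paper: both rest on the intertwining relation $M_{\ula}^{k}\circ\rho_{\ula}(g)=\e^{kb_{\ula}}(g)\,\rho_{\ula}(g)\circ M_{\ula}^{k}$ (the paper phrases it as $M_{\ula}^k\rho_{\ula}M_{\ula}^{-k}=\e^{b_{\ula}k}\otimes\rho_{\ula}$), followed by extracting the scalar and cycling the trace. No gaps.
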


\begin{proof}
Recall that $M_{\ula} \rho_{\ula} M_{\ula}^{-1}=\e^{b_{\ula}} \otimes
\rho_{\ula}$, so that $M_{\ula}^k \rho_{\ula}
M_{\ula}^{-k}=\e^{b_{\ula}k} \otimes \rho_{\ula}$. Thus
\begin{eqnarray*}
\Delta_{\ula,k}({}^g x) & = & \Tr (M_{\ula}^k \rho_{\ula} ({}^g x))
\\
 & = &  \Tr (M_{\ula}^{k} \rho_{\ula} (g) \rho_{\ula} (x) \rho_{\ula} (g)^{-1})
 \\
 & = &  \Tr (\e^{kb_{\underline\lambda}} \otimes \rho_{\ula}(g) M_{\ula}^k \rho_{\ula} (x) \rho_{\ula} (g)^{-1})
\\
& = & \Tr (\e^{kb_{\underline\lambda}}(g) \rho_{\ula}(g) M_{\ula}^k
\rho_{\ula} (x) \rho_{\ula} (g)^{-1})
\\
& = & \e^{kb_{\underline\lambda}}(g) \Tr ( \rho_{\ula}(g) M_{\ula}^k
\rho_{\ula} (x) \rho_{\ula} (g)^{-1})
\\
& = & \e^{kb_{\underline\lambda}}(g) \Tr (M_{\ula}^k \rho_{\ula} (g_{\ueta})),
\end{eqnarray*}
whence
\begin{equation}
\Delta_{\ula,i}(x)=\e^{kb_{\underline\lambda}}(g)\Delta_{\ula,k}(x).
\end{equation}
\end{proof}
Let $\mathfrak g\in G(de,1,r)$ be an element of order $e$ such that
$$G(de,1,r)=G(de,e,r)\rtimes\langle \mathfrak g\rangle.$$ 
Suppose that $\varepsilon(\mathfrak g)=\omega=\zeta^d$. 
Assume that $e$ divides $r$.
For any divisor $q$ of $e$,  define
$$\mathcal
P_{r,ed,q}=\{(\eta_0,\emptyset,\ldots,\emptyset,\eta_q,\emptyset,\ldots,\emptyset,
\eta_{2q},\emptyset,\ldots,\emptyset,\eta_{de-q},\emptyset,\ldots,\emptyset)\mid
\eta_j\in q\mathcal P_{r/q}\}.$$
Furthermore, for any $0\leq j\leq q-1$ and $\ueta\in\mathcal P_{r,ed,q}$, write
$$g_{\ueta,j}={}^{\mathfrak g^{j}}g_{\ueta}.$$

\begin{theorem}
\label{th:classesGeder}
The set $$\bigsqcup_{q|e}\{g_{\ueta,j}\mid \ueta\in\mathcal P_{r,de,q},\
0\leq j\leq q-1 \}$$
is a system of representatives for the conjugacy classes of $G(de,e,r)$.
\end{theorem}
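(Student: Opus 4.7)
The strategy is to apply Clifford theory for conjugacy classes: since $N\triangleleft G=G(de,1,r)$ with $G/N=\langle\mathfrak g N\rangle$ cyclic of order $e$, each $G$-class $C\subseteq N$ decomposes into $e/|\varepsilon(C_G(x))|$ $N$-classes for any $x\in C$, and these are permuted transitively by $\langle\mathfrak g\rangle$ via conjugation. Under the identification $G/N\simeq\mathcal U_e$ induced by $\varepsilon$, the stabiliser of the $N$-class $[x]_N$ in $G/N$ corresponds to $\varepsilon(C_G(x))$. The proof thus reduces to three tasks: (i) determining which $\ueta\in\mathcal{MP}_{r,de}$ satisfy $g_\ueta\in N$; (ii) computing $\varepsilon(C_G(g_\ueta))$ explicitly; and (iii) matching the resulting invariant to the combinatorial set $\mathcal P_{r,de,q}$.

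For (i), Convention~\ref{conv:defrep} gives $\varepsilon(g_\ueta)=\zeta^{d\sum_u u\ell(\eta_u)}$, so $g_\ueta\in N$ iff $e\mid\sum_u u\ell(\eta_u)$. For (ii), let $g=g_\ueta$ have cycles of lengths $\ell_1,\ldots,\ell_s$ at positions $u_1,\ldots,u_s$. I claim $\varepsilon(C_G(g))=\mathcal U_{e/q_0}$, where $q_0:=\gcd(e,\ell_1,\ldots,\ell_s,u_1,\ldots,u_s)$. The centraliser in $G=\mathcal U_{de}^r\rtimes\sym_r$ contains three families whose union generates the $\varepsilon$-image: (a) diagonals $(w;1)$ that are constant on each cycle of $\sigma$, contributing values $u^{d\ell_j}$ for $u\in\mathcal U_{de}$, which sweep out $\mathcal U_{e/\gcd(e,\ell_j)}$ on each cycle and together generate $\mathcal U_{e/\gcd(e,\{\ell_j\})}$; (b) the cyclic factors $\langle g|_{c_j}\rangle$, for which $\varepsilon(g|_{c_j})=\zeta^{du_j}$, collectively generating $\mathcal U_{e/\gcd(e,\{u_j\})}$; and (c) pure permutations of cycles of equal length and equal cycle product, which have $\varepsilon=1$ and contribute nothing new. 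Combined, (a) and (b) generate precisely $\mathcal U_{e/q_0}$.

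From (ii), $C_\ueta^G$ splits into $q_0$ $N$-classes, with stabiliser $\langle\mathfrak g^{q_0}\rangle$ in $\langle\mathfrak g\rangle$, so $\{g_{\ueta,j}\mid 0\leq j\leq q_0-1\}$ is a complete set of $N$-class representatives inside $C_\ueta^G$. The divisibilities $q_0\mid\ell_j$ and $q_0\mid u_j$ for every cycle-index $j$ are exactly those defining $\ueta\in\mathcal P_{r,de,q_0}$ (support at positions in $q_0\Z$ and parts in $q_0\Z$), so the theorem's parametrisation is matched with $q=q_0(\ueta)$. Completeness follows because every $h\in N$ is $G$-conjugate to some $g_{\ueta_0}$ (by the classification of $G$-classes recalled before Convention~\ref{conv:defrep}), and writing the conjugator as $n\mathfrak g^j$ with $n\in N$ yields $h\sim_N g_{\ueta_0,j}$; the stabiliser calculation then determines the unique $j\in\{0,\ldots,q_0-1\}$. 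The main obstacle is step (ii): one must rule out ``exotic'' centralising elements $(w;\tau)$ with $w\neq 1$ and $\tau$ non-trivially permuting cycles, whose $\varepsilon$-value might \emph{a priori} lie outside $\mathcal U_{e/q_0}$. The cleanest resolution is the standard description of wreath-product centralisers: $C_G((z;\sigma))$ decomposes as a direct product indexed by strata of cycles with equal length and cycle product, each stratum being isomorphic to a wreath product $(\mathcal U_{de}\rtimes C_\ell)\wr\sym_m$, on which the $\varepsilon$-image is directly computable and is exactly the cyclic group predicted above.
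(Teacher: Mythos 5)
Your route is genuinely different from the paper's. The paper never touches centralisers: it separates the candidates $g_{\ueta,j}$, $0\leq j\leq q-1$, by evaluating the class functions $\Delta_{f_1(\umu),1}$ on them (Proposition~\ref{prop:conjvaleurs} gives $\Delta_{f_1(\umu),1}(g_{\ueta,j})=(\omega^{e/q})^j\Delta_{f_1(\umu),1}(g_{\ueta})$, and the non-vanishing comes from Remark~\ref{rk:valeurs} applied to the trivial character of $G(de/q,1,r/q)$), and it obtains completeness by matching the total count against $\Irr(G(de,e,r))$ through the maps $f_1,f_2$. You instead run Clifford theory on conjugacy classes directly: the $G$-class of $x\in N$ splits into $e/|\varepsilon(C_G(x))|$ classes of $N$, permuted transitively by $\langle\mathfrak g\rangle$, and you compute $\varepsilon(C_G(g_{\ueta}))=\mathcal U_{e/q_0}$ with $q_0=\gcd(e,\{\ell_j\},\{u_j\})$ from the stratified description of wreath-product centralisers. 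That computation is correct: scalars constant on a cycle of length $\ell_j$ contribute $\mathcal U_{e/\gcd(e,\ell_j)}$, the cycle $\widetilde\sigma_j$ itself contributes $\langle\zeta^{du_j}\rangle=\mathcal U_{e/\gcd(e,u_j)}$, and for the representatives of Convention~\ref{conv:defrep} the elements permuting cycles of equal type can be taken to be pure permutations, hence lie in $\ker\varepsilon$. Your approach buys an intrinsic identification of the splitting number of each individual class, where the paper only gets ``at least $q$'' from character separation and ``exactly $q$'' from a global count; the price is an appeal to the centraliser structure theorem, which the paper avoids by reusing machinery it has already built (Theorem~\ref{th:valcar2} and Proposition~\ref{prop:conjvaleurs}).

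The gap is in your step (iii), and it is precisely where the real bookkeeping lives. The conditions $q\mid\ell_j$ and $q\mid u_j$ for all $j$ characterise membership of $\ueta$ in $\mathcal P_{r,de,q}$ for \emph{every} divisor $q$ of $q_0(\ueta)$, not only for $q=q_0(\ueta)$: the sets $\mathcal P_{r,de,q}$ are nested ($\mathcal P_{r,de,q'}\subseteq\mathcal P_{r,de,q}$ whenever $q\mid q'$), so the assertion that ``the theorem's parametrisation is matched with $q=q_0(\ueta)$'' does not follow from the divisibilities alone; one must check that each $\ueta$ is to be counted for exactly one value of $q$, namely $q_0(\ueta)$, or the union overcounts. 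Moreover, your own condition (i), namely $e\mid\sum_j u_j$ (equivalently $g_{\ueta}\in N$), is \emph{not} implied by membership in $\mathcal P_{r,de,q}$ for any $q$ (already for $G(2,2,2)$, the structure $\ueta=(\emptyset,(2))$ has $q_0=1$ but $g_{\ueta}\notin N$), and after stating it you never feed it back into the parametrisation. So as written, your argument correctly establishes that the $G$-class of each $g_{\ueta}\in N$ splits into exactly $q_0(\ueta)$ classes of $N$ represented by $g_{\ueta,j}$, $0\leq j\leq q_0(\ueta)-1$, and that these exhaust the classes of $N$; but the identification of this list with $\bigsqcup_{q\mid e}\mathcal P_{r,de,q}\times\{0,\ldots,q-1\}$ is asserted rather than proved. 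To close the argument you must read $\mathcal P_{r,de,q}$ as the set of $\ueta$ with $g_{\ueta}\in N$ for which $q$ is \emph{exactly} $\gcd(e,\{\ell_j\},\{u_j\})$ and verify the resulting bijection explicitly; since your whole strategy is to compute the splitting invariant class by class, this reconciliation is the one step you cannot omit.
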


\begin{proof} Write $\mathcal E$ for a system of representatives of the $\langle
\varepsilon\rangle$-orbits of $\mathcal P_{r,de}$.
By Clifford theory from $G(de,1,r)$ to $G(de,e,r)$, the elements of
$$\mathfrak P=\{(\ula,k)\mid \ula\in\mathcal E,\, 0\leq k\leq
|C_{\ula}|-1\}$$
label $\Irr(G(de,e,r))$. For any divisor $q$ of $e$, write $\mathfrak
P_q=\{(\ula,k)\in\mathfrak P\mid |C_{\ula}|=q\}$. In particular,
$$\mathfrak P=\bigsqcup_{q|e}\mathfrak P_q.$$
Note that $(\ula,k)\in \mathfrak P_q$ if and only if
$\ula=(\underline\mu,\ldots,\underline\mu)$, where
$\underline\mu\in\mathcal P_{r/q,de/q}$ is
repeated $q$ times. 
Let $q$ be a divisor of $e$. For any
$\underline\mu=(\mu_0,\ldots,\mu_{de/q-1})\in\mathcal P_{r/q,de/q}$, we define
$f_1(\underline\mu)=(\underline\mu,\ldots,\underline\mu)\in \mathcal
P_{r,de}$ and
$f_2(\underline\mu)=(\lambda_0,\ldots,\lambda_{de-1})\in\mathcal
P_{r,de,q}$
where $\lambda_{qj}=q\mu_j$ for $0\leq j\leq de/q-1$ and
$\lambda_u=\emptyset$ otherwise. The maps $f_1$ and $f_2$ are bijective.
Let $\underline\eta_1$ and $\underline\eta_2$ be two distinct elements of $\mathcal
P_{r,de,q}$. Then for all $0\leq j_1,\,j_2\leq q-1$, the elements
$g_{\underline\eta_1,j_1}$ and $g_{\underline\eta_2,j_2}$ are not
conjugate in $G(de,e,r)$ since they are not conjugate in $G(de,1,r)$.
Let $\underline\eta\in\mathcal P_{r,de,q}$. Write
$\underline\eta'=f_2^{-1}(\eta)$. There exists a character
$\widetilde{\chi}_{\underline\mu}$ of
$G(de/q,1,r/q)$ such that
$\widetilde{\chi}_{\underline\mu}(g_{\underline\eta'})\neq 0$ (we can take for
example $\underline\mu$ such that $\widetilde{\chi}_{\underline\mu}$ is
the trivial character of $G(de/q,1,r/q)$). Then by
Proposition~\ref{prop:conjvaleurs},
for $0\leq j\leq q-1$, we have
\begin{align*}
 \Delta_{f_1(\underline\mu),1}(g_{\underline\eta,j})&=\varepsilon^{b_{\underline\lambda}}(\mathfrak
g^{j}) \Delta_{f_1(\underline\mu),1}(g_{\underline\eta})\\
&=\omega^{b_{\underline\lambda}j}\Delta_{f_1(\underline\mu),1}(g_{\underline\eta})\\
&=(\omega^{e/q})^j\Delta_{f_1(\underline\mu),1}(g_{\underline\eta}).
\end{align*}
Furthermore, $\Delta_{f_1(\underline\mu),1}(g_{\underline\eta})\neq 0$ by Remark~\ref{rk:valeurs} and Theorem~\ref{th:valcar2}, and
$\Delta_{f_1(\underline\mu),1}(g_{\underline\eta,j_1})\neq
\Delta_{f_1(\underline\mu),1}(g_{\underline\eta,j_2})$ for all $j_1\neq
j_2$ since $\omega^{e/q}$ is a primitive $q$th-root of unity. 
Now, using that $\Delta_{f_1(\underline\mu),1}$ is a class function
of $G(de,e,r)$, we conclude that the elements $g_{\underline\eta,j}$ for $0\leq
j\leq q-1$ are not conjugate in $G(de,e,r)$. Finally, the result
follows from the fact that 
$f_1\circ f_2^{-1}$ induces a bijection between
the sets $\{g_{\underline\eta,j}\mid \underline\eta\in\mathcal
P_{r,de,q},\,0\leq j\leq q-1\}$ and $\{\chi_{\underline\lambda,j}\mid
(\underline\lambda,j)\in\mathfrak P_q\}$.
\end{proof}
\smallskip
\begin{example}
Let $e$ be a prime number and $r$ be a positive integer. By
Theorem~\ref{th:classesGeder}, the elements $g_{\ueta,j}$ where
$\ueta=(\eta,\emptyset,\ldots,\emptyset)$ with $\eta\in\mathcal P_r$ and
$0\leq j\leq e-1$ form a system of representatives for the split
classes of $G(e,e,er)$. For $\lambda\in\mathcal P_{r}$, set
$\underline\lambda=(\lambda,\lambda,\ldots,\lambda)\in\mathcal
P_{er,e}$. By Theorem~\ref{th:valcar2}, Remark~\ref{rk:valeurs},
Theorem~\ref{thm:memeordre} and Proposition~\ref{prop:conjvaleurs}, for
$1\leq k\leq e-1$,
we have
$$\Delta_{\underline\lambda,k}(g_{\ueta,j})=\zeta^{kj}e^{\ell(\eta)}\chi_{\lambda}(\eta),$$
where $\chi_{\lambda}(\eta)$ is the value of the irreducible character
of $\sym_r$ labeled by $\lambda$ on a element with cyclic structure
$\eta$. Now, using Equality~(\ref{eq:chidelta}), we obtain
$$\chi_{\underline\lambda,k}(g_{\underline\eta,j})=
\left\{
\begin{array}{ll}
\dis \frac 1
e\left(\chi_{\underline\lambda}(g_{\underline\eta}))-e^{\ell(\eta)}\chi_{\lambda}(\eta)\right)&
\text{if }k\neq j,
\\
\\
\dis \frac 1
e\left(\chi_{\underline\lambda}(g_{\underline\eta}))+(e-1)e^{\ell(\eta)}\chi_{\lambda}(\eta)\right)&
\text{if }k = j.
\end{array}
\right.$$
In particular, for $e=2$, we recover with our method the result
of~\cite[Thm.\,5.1]{Pfeiffer}.
\end{example}

\section{Perfect Isometries}
\label{sec:perfisom}

Throughout this section, we consider $G=G(de,1,r)$ and its normal subgroup $N=G(de,e,r)$, and we use the notation of Section \ref{sec:representations}.

\subsection{Characters of $N$}
\label{sec:carN}
In order to describe $\Irr(N)$, we will apply Clifford theory from $G$ to $N$. We therefore consider the orbits of $\Irr(G)$ under the action of $G/N \cong \cyc{\e}$, or, equivalently, the $\cyc{\e}$-orbits of the parametrizing set ${\cal{MP}}_{r,de}$. For any $\ula \in {\cal{MP}}_{r,de} $, we denote by $[\ula]$ the $\cyc{\e}$-orbit of $\ula$. Hence $\umu \in [\ula]$ if and only if there exists $s \in \N$ such that $\umu=\e^s(\ula)$. In particular, for any $\umu \in [\ula]$, we have $b_{\ula}=b_{\umu}$ and $|C_{\ula}|=|C_{\umu}|$. Furthermore, we see that $| [\ula] |=\dis \frac{e}{|C_{\ula}|}=b_{\ula}$.

\begin{lemma}
\label{lem:carorbites}
If $\ula, \umu \in {\cal{MP}}_{r,de}$ are such that $[\ula]=[\umu]$, then, with the notation of Section \ref{sec:representations}, $\chi_{\ula,i}=\chi_{\umu,i}$ for all $0 \leq i < |C_{\ula}|$.
\end{lemma}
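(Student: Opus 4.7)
The plan is to use the explicit description of $\chi_{\ula,k}$ in terms of eigenspaces of the intertwiner $M_{\ula}$, and to observe that both $\rho_{\ula}$ and $\rho_{\umu}$ essentially give rise to the same data on $N$, with the same intertwiner $M$.

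First, since $[\ula]=[\umu]$, there exists $s \in \N$ with $\umu=\e^s(\ula)$, which by the discussion preceding equation~(\ref{eq:divir}) means $\rho_{\umu} \cong \e^s \otimes \rho_{\ula}$. Fix such an isomorphism, and use it to identify $W_{\umu}$ with $W_{\ula}$, so that $\rho_{\umu}(g) = \e^s(g)\rho_{\ula}(g)$ for all $g \in G$. As noted before the lemma, we have $b_{\ula} = b_{\umu}$ and $|C_{\ula}|=|C_{\umu}|$.

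Next I would check that $M_{\ula}$ itself is a valid choice for the intertwiner $M_{\umu}$. Indeed, for any $g \in G$,
\begin{align*}
M_{\ula}\,\rho_{\umu}(g)\,M_{\ula}^{-1}
&= \e^s(g)\,M_{\ula}\,\rho_{\ula}(g)\,M_{\ula}^{-1} \\
&= \e^s(g)\,\e^{b_{\ula}}(g)\,\rho_{\ula}(g) \\
&= \e^{b_{\umu}}(g)\,\rho_{\umu}(g),
\end{align*}
so $M_{\ula} \in \operatorname{Hom}_G(\rho_{\umu},\e^{b_{\umu}}\otimes\rho_{\umu})$, and it still has order $|C_{\ula}|=|C_{\umu}|$. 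By Schur's lemma such an operator is unique up to a scalar, so I may take $M_{\umu}=M_{\ula}$ without loss of generality (the scalar choice does not affect the eigenspace decomposition into labelled pieces once one fixes the same eigenvalue labelling via $\zeta^{b_{\ula}dk}$ as above).

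Finally, for each $0 \leq k < |C_{\ula}|$, the eigenspace of $M_{\umu}=M_{\ula}$ on $W_{\umu}=W_{\ula}$ attached to the eigenvalue $\zeta^{b_{\ula}dk}=\zeta^{b_{\umu}dk}$ is exactly $W_{\ula,k}=W_{\umu,k}$ as a subspace. For $x \in N=\Ker(\e)$, we have $\e^s(x)=1$, hence $\rho_{\umu}(x)=\rho_{\ula}(x)$ on $W_{\ula}$. Restricting to the eigenspace $W_{\ula,k}=W_{\umu,k}$ therefore gives the same $N$-representation whether computed from $\rho_{\ula}$ or from $\rho_{\umu}$, and thus $\chi_{\ula,k}(x)=\chi_{\umu,k}(x)$ for every $x \in N$. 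The only subtle point is ensuring that the same index $k$ really does correspond on both sides, which is handled by the fact that the eigenvalue labelling $\zeta^{b_{\ula}dk}$ depends only on the common value $b_{\ula}=b_{\umu}$, and that we have used the \emph{same} operator $M_{\ula}$ on the identified spaces $W_{\ula}=W_{\umu}$.
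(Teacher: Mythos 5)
Your reduction to a single operator is the right idea, but the step ``I may take $M_{\umu}=M_{\ula}$ without loss of generality'' is exactly where the content of the lemma lies, and as written it is a gap. The characters $\chi_{\umu,i}$ are not defined relative to an arbitrary intertwiner of order $|C_{\umu}|$: the order condition only determines $M_{\umu}$ up to multiplication by an element of ${\cal U}_{|C_{\umu}|}$, and the paper pins down a canonical choice in Proposition \ref{prop:actiondeM} (a permutation of the basis $\mathfrak{b}_{\umu}$); it is this canonical $M_{\umu}$ that enters the definition of $\chi_{\umu,i}$, the value formulas of Theorem \ref{th:valcar2} and Theorem \ref{thm:memeordre}, and the isometry of Definition \ref{def:perfisom}. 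Replacing $M_{\umu}$ by $\xi M_{\umu}$ with $\xi=\zeta^{b_{\umu}dj_0}\neq 1$ shifts the eigenvalue labelling by $j_0$ and hence permutes the characters $\chi_{\umu,i}$ cyclically, so your parenthetical claim that ``the scalar choice does not affect the eigenspace decomposition into labelled pieces'' is false; the ``subtle point'' you flag at the end is not actually handled, because you have in effect renamed the characters attached to $\umu$ so that the statement holds by fiat, which would then be inconsistent with the use of $\Delta_{\umu,i}$ elsewhere in the paper.

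What is missing is the verification that the scalar $\xi$ with $M_{\ula}=\xi M_{\umu}$ (which Schur's Lemma gives you, exactly as you argue) equals $1$ for the paper's canonical choices. The paper does this by observing that, by Proposition \ref{prop:actiondeM}, both $\Mat(M_{\ula},\mathfrak{b}_{\ula})$ and $\Mat(M_{\umu},\mathfrak{b}_{\umu})$ are permutation matrices, that the change of basis $P_{\sigma}$ from $\mathfrak{b}_{\ula}$ to $\mathfrak{b}_{\umu}$ is itself a permutation matrix, and hence that every matrix in the identity $\Mat(M_{\ula},\mathfrak{b}_{\ula})=\xi P_{\sigma}^{-1}\Mat(M_{\umu},\mathfrak{b}_{\umu})P_{\sigma}$ has non-negative integer entries, forcing $\xi=1$. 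Once that is supplied, the rest of your argument (restriction to $N$ kills the twist $\e^{s}$, and the labelled eigenspaces coincide) goes through.
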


\begin{proof}

We have $[\ula]=[\umu]$, so there exists $s \in \N$ such that $\rho_{\ula}=\e^s \otimes \rho_{\umu}$, and $W_{\ula}=W_{\umu}=W$. In particular, there exist endomorphisms $M_{\ula}$ and $M_{\umu}$ of $W$ such that $M_{\ula} \rho_{\ula} = \e_{\ula} \rho_{\ula} M_{\ula}$ and $M_{\umu} \rho_{\umu} = \e_{\umu} \rho_{\umu} M_{\umu}$, where, furthermore, $\e_{\ula}=\e^{b_{\ula}}=\e^{b_{\umu}}=\e_{\umu}$.

We therefore have
$$\e^s M_{\ula}\rho_{\umu}=M_{\ula} \e^s \rho_{\umu} = M_{\ula} \rho_{\ula}=\e_{\ula} \rho_{\ula} M_{\ula}=\e^s \e_{\ula} \rho_{\umu} M_{\ula},$$
so that $M_{\ula}\rho_{\umu}=\e_{\ula} \rho_{\umu} M_{\ula}=\e_{\umu} \rho_{\umu} M_{\ula}$ (since $\e_{\ula}=\e_{\umu}$).

Now, since $\e_{\umu} \rho_{\umu} M_{\umu}=M_{\umu} \rho_{\umu}$, we also have $M_{\umu}^{-1} \e_{\umu} \rho_{\umu}=\rho_{\umu} M_{\umu}^{-1}$, so that $M_{\ula}\rho_{\umu}=\e_{\umu} \rho_{\umu} M_{\ula}$ yields $M_{\umu}^{-1} M_{\ula} \rho_{\umu} = M_{\umu}^{-1}\e_{\umu} \rho_{\umu} M_{\ula}=\rho_{\umu} M_{\umu}^{-1} M_{\ula}$. Hence $M_{\umu}^{-1} M_{\ula} \in \End_G(\rho_{\umu})$, and Schur's Lemma shows that $M_{\ula}=\xi M_{\umu}$ for some $\xi \in \C$.

We will show that $\xi=1$. First note that, since $M_{\ula}$ and $M_{\umu}$ both have order $|C_{\ula}|=|C_{\umu}|$, we must have $\xi^{|C_{\ula}|}=1$. Now fix any order on the elements of the bases $\mathfrak{b}_{\ula}$ and $\mathfrak{b}_{\umu}$ of $W$. By Proposition \ref{prop:actiondeM}, we have, for any $t_X \otimes v_{\ula, \uT} \in \mathfrak{b}_{\ula}$,
$$M_{\ula}(t_X \otimes v_{\ula, \uT})=t_{\e^{b_{\ula}}(X)} \otimes v_{\e^{b_{\ula}}(\ula), \e^{b_{\ula}}(\uT)}=t_{\e^{b_{\ula}}(X)} \otimes v_{\ula, \e^{b_{\ula}}(\uT)} \in \mathfrak{b}_{\ula}$$
(since, by definition, $\e^{b_{\ula}}(\ula)=\ula)$. Hence $\Mat(M_{\ula},\mathfrak{b}_{\ula})$ is a permutation matrix. Similarly, $\Mat(M_{\umu},\mathfrak{b}_{\umu})$ is a permutation matrix.

Now, since $\umu=\e^s(\ula)$, there is a bijection $\sigma \colon \mathfrak{b}_{\ula} \longrightarrow \mathfrak{b}_{\umu}$, given by $\sigma (t_X \otimes v_{\ula, \uT})=t_{\e^s(X)} \otimes v_{\umu, \e^s(\uT)})$ for all $t_X \otimes v_{\ula, \uT} \in \mathfrak{b}_{\ula}$. The corresponding change of basis matrix $P_{\sigma}$ from $\mathfrak{b}_{\ula}$ to $\mathfrak{b}_{\umu}$ is therefore also a permutation matrix.

By construction, we have
$$\Mat(M_{\ula},\mathfrak{b}_{\ula})=P_{\sigma}^{-1}\Mat(M_{\ula},\mathfrak{b}_{\umu})P_{\sigma} = \xi P_{\sigma}^{-1}\Mat(M_{\umu},\mathfrak{b}_{\umu})P_{\sigma}$$
(since $M_{\ula}=\xi M_{\umu}$). Since all of these matrices have entries in $\N$, we deduce that $\xi=1$, and thus that $M_{\ula}= M_{\umu}$.

In particular, with the notation of Section \ref{sec:representations}, the eigenspaces $W_{\ula,i}$ and $W_{\umu,i}$ coincide for all $0 \leq i < | C_{\ula} |$, and $\chi_{\ula,i}=\chi_{\umu,i}$ for all $0 \leq i < |C_{\ula}|$.

\end{proof}

\begin{corollary}
\label{cor:cardifforbites}
If $\ula, \umu \in {\cal{MP}}_{r,de}$ are such that $[\ula]=[\umu]$, then $\Delta_{\ula,i}=\Delta_{\umu,i}$ for all $0 \leq i < |C_{\ula}|$.

\end{corollary}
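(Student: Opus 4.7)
The plan is to leverage Lemma~\ref{lem:carorbites} together with the relation (\ref{eq:chardiff}) expressing the $\Delta_{\ula,k}$ in terms of the $\chi_{\ula,j}$. Recall that, as noted at the beginning of \S\ref{sec:carN}, whenever $[\ula]=[\umu]$ we have $b_{\ula}=b_{\umu}$ and $|C_{\ula}|=|C_{\umu}|$, so the coefficients $\zeta^{db_{\ula}kj}$ appearing in (\ref{eq:chardiff}) are the same for $\ula$ and for $\umu$. Applying Lemma~\ref{lem:carorbites}, the summands $\chi_{\ula,j}$ and $\chi_{\umu,j}$ also agree for every $0\leq j\leq |C_{\ula}|-1$, and the identity $\Delta_{\ula,k}=\Delta_{\umu,k}$ follows immediately from (\ref{eq:chardiff}).

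Alternatively (and even more directly), one can read the statement off from the proof of Lemma~\ref{lem:carorbites}. There it is shown that $W_{\ula}=W_{\umu}=W$ and $M_{\ula}=M_{\umu}$ as endomorphisms of $W$, while $\rho_{\ula}=\e^s\otimes\rho_{\umu}$ for some $s\in\N$. Since $\e$ is trivial on $N=\Ker(\e)$, one has $\rho_{\ula}(g)=\rho_{\umu}(g)$ for every $g\in N$. Hence, for $g\in N$,
\begin{equation*}
\Delta_{\ula,i}(g)=\Tr\bigl(M_{\ula}^{i}\circ\rho_{\ula}(g)\bigr)=\Tr\bigl(M_{\umu}^{i}\circ\rho_{\umu}(g)\bigr)=\Delta_{\umu,i}(g),
\end{equation*}
which is the desired equality.

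There is no real obstacle: the corollary is an immediate consequence of Lemma~\ref{lem:carorbites} combined with either the linear inversion formula~(\ref{eq:chardiff}) or the identification $M_{\ula}=M_{\umu}$, $\rho_{\ula}|_N=\rho_{\umu}|_N$ established in its proof. The only point to verify carefully is that $b_{\ula}$ and $|C_{\ula}|$ are invariants of the $\langle\e\rangle$-orbit $[\ula]$, which was observed at the start of \S\ref{sec:carN}.
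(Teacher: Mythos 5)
Your first argument is exactly the paper's proof: the corollary follows immediately from Lemma~\ref{lem:carorbites} and the inversion formula~(\ref{eq:chardiff}), using that $b_{\ula}=b_{\umu}$ and $|C_{\ula}|=|C_{\umu}|$ on an orbit. The alternative via $M_{\ula}=M_{\umu}$ and $\rho_{\ula}|_N=\rho_{\umu}|_N$ is also valid, but the main line of reasoning is essentially identical to the paper's.
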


\begin{proof}
This follows immediately from Lemma \ref{lem:carorbites} and (\ref{eq:chardiff}) (since $b_{\ula}=b_{\umu}$ and $|C_{\ula}|=|C_{\umu}|$).

\end{proof}

\begin{remark}
\label{rem:signe}
Suppose $\umu \in {\cal{MP}}_{r,de}$ is such that $|C_{\umu}|$ is even, and take $\da \in \{ \pm 1 \}$. Then $M_{\umu}$ and $\da M_{\umu}$ have the same eigenspaces, and the same set ${\cal U}_{|C_{\umu}|}=\cyc{\omega}$ of eigenvalues, where $\omega=\zeta^{b_{\umu}d}$. For any $\omega^j \in {\cal U}_{|C_{\umu}|}$, we set $$\chi_{\umu,j,\da M_{\umu}}=\Tr(\rho_{\umu} \, | \, E_{\omega^j} ),$$where $E_{\omega^j}$ is the eigenspace of $\da M_{\umu}$ corresponding to the eigenvalue $\omega^j$. We also set, for any $0 \leq i < |C_{\umu}|$,$$\Delta_{\umu,i,\da M_{\umu}}=\Tr((\da M_{\umu})^i\rho_{\umu} \, | \, W_{\umu} ).$$
In particular, we have $\Delta_{\umu,i,\da M_{\umu}}=\da^i \Delta_{\umu,i}$. 

We also have, as in Section \ref{sec:representations},
$$\Delta_{\umu,i,\da M_{\umu}}= \dis \sum_{j=0}^{|C_{\umu}|-1} \omega^{ij} \chi_{\umu,j,\da M_{\umu}} \quad
\mbox{and} \quad \chi_{\umu,j,\da M_{\umu}}= \dis \frac{1}{|C_{\umu}|} \sum_{j=0}^{|C_{\umu}|-1} \omega^{-ij}\Delta_{\umu,i,\da M_{\umu}}.$$
\end{remark}

\subsection{Blocks of $G$ and $N$}
\label{sec:blocks}

We now take any prime $p$ not dividing $de$. The $p$-blocks of $G$ can be described as follows (see \cite[Theorem 1]{OsimaII}). Two irreducible characters $\widetilde{\chi}_{\umu}$ and $\widetilde{\chi}_{\unu}$ of $G$, corresponding to $\umu=(\mu^{(0)},\ldots,\mu^{(de-1)})$ and $\unu=(\nu^{(0)},\ldots,\nu^{(de-1)})$ in ${\cal{MP}}_{r,de}$ lie in the same
$p$-block $B$ of $G$ if and only if, for every $0\leq i\leq de-1$, the partitions
$\mu^{(i)}$ and $\nu^{(i)}$ have the same $p$-core $(\mu^{(i)})_{(p)}=(\nu^{(i)})_{(p)}=\gamma^{(i)}$
and same $p$-weight $w_p(\mu^{(i)})=w_p(\nu^{(i)})=w_i$. The
$de$-tuple $\uw=(w_0,\ldots,w_{de-1})$ (respectively
$\uga=(\gamma^{(0)},\ldots,\gamma^{(de-1)})$) is called the $p$-weight of
$B$ (respectively the $p$-core of $B$). Note that $B$ has $p$-defect 0 if and only if $w=(0, \ldots, 0)$. We denote by $\mathcal E_{\uga,\uw}$
the set of $de$-multipartitions $\umu=(\mu^{(0)},\ldots,\mu^{(de-1)})$ such that
$(\mu^{(i)})_{(p)}=\gamma^{(i)}$ and $w_p(\mu^{(i)})=w_i$.

\smallskip
We can now describe the $p$-blocks of $N$ using Clifford theory. If $B$ is a $p$-block of $G$ of defect 0, then, since $(p,e)=1$, $B$ only covers $p$-blocks of defect 0 of $N$. Conversely, a $p$-block of $N$ of defect 0 can only been covered by $p$-blocks of $G$ of defect 0. Hence suppose $B$ is a $p$-block of $G$ of positive defect, and take $k$ dividing $e$ minimal such that $B$ is $\e^k$-stable (i.e. $\e^k \otimes B=B$). Then $B$ has $p$-core $\uga=(\ga^{(0)}, \ga^{(1)}, \ldots, \ga^{(kd-1)}, \ga^{(0)},\ldots, \ga^{(kd-1)}, \ldots , \ga^{(0)},\ldots, \ga^{(kd-1)} )$ and $p$-weight $\uw=(w_0, w_1, \ldots, w_{kd-1}, w_0, \ldots, w_{kd-1}, \ldots, w_0, \ldots, w_{kd-1})$, where $w_0 + \cdots + w_{kd-1} \neq 0$. Without loss of generality, we can furthermore suppose that $w_0 \neq 0$. Now consider any $\ula \in {\cal{MP}}_{r,de}$ given by
$$\ula=(\la^{(0)}, \la^{(1)}, \ldots , \la^{(kd-1)}, \mu^{(0)}, \la^{(1)}, \ldots , \la^{(kd-1)}, 
 \ldots,  \mu^{(0)}, \la^{(1)}, \ldots , \la^{(kd-1)}),$$where
\begin{itemize}
\item
for $1 \leq i \leq kd-1$, $(\la^{(i)})_{(p)}=\ga^{(i)}$ and $w_p(\la^{(i)})=w_i$,

\item
$(\la^{(0)})_{(p)}=(\mu^{(0)})_{(p)}=\ga^{(0)}$, $\la^{(0)}$ and $\mu^{(0)}$ have $p$-quotients $Q_p(\la^{(0)})=((w_0), \emptyset, \ldots, \emptyset)$ and $Q_p(\mu^{(0)})=( \emptyset, \ldots, \emptyset, (w_0))$ (so that $\la^{(0)} \neq \mu^{(0)}$),

\item
and, for $1 \leq j \leq k-1$, $$\la^{(jd)}= \left\{ \begin{array}{cl} \mu^{(0)} & \mbox{if} \; w_{jd}=w_0 \; \mbox{and} \; \ga^{(jd)}=\ga^{(0)} \\ \mbox{any} \; \mu \; \mbox{with}  \; \mu_{(p)}=\ga^{(jd)} \; \mbox{and} \; w_p(\mu)= w_{jd} &  \mbox{if} \; w_{jd} \neq w_0 \; \mbox{or} \; \ga^{(jd)} \neq \ga^{(0)} \end{array} \right. $$
(so that $\la^{(jd)} \neq \la^{(0)}$).
\end{itemize}

Then $\ula_{(p)}=\uga$ and $w_p(\ula)=\uw$, so that $\widetilde{\chi}_{\ula} \in B$. And $\la^{(jd)} \neq \la^{(0)}$ for all $0 < j <e$, so that $\e^j(\ula)\neq \ula$, and $\widetilde{\chi}_{\ula}$ is not $\e^j$-stable for any $0 < j < e$.

This shows that any $p$-block $B$ of $G$ of positive defect contains an irreducible character which is not $\e^j$-stable for any $0 < j < e$. By Clifford theory, such a character must restrict irreducibly to $N$, and its restriction to $N$ is therefore $G$-stable. By \cite[Corollary (9.3)]{Navarro}, this implies that $B$ covers a unique $p$-block $b$ of $N$.

 \subsection{Bijections and isometries between blocks}
 \label{sec:bijections}
 
 We now fix the positive integers $d$ and $e$, a prime $p$ not dividing $de$, and consider two positive integers $r$ and $r'$. We let $G=G(de,1,r)$, $N=G(de,e,r)$, $G'=G(de,1,r')$ and $N'=G(de,e,r')$. Suppose $b$ is a $p$-block of $N$, covered by the $p$-block $B$ of $G$ of $p$-core $\uga=(\ga^{(0)}, \ldots , \ga^{(de-1)})$ and $p$-weight $\uw=(w_0, \ldots, w_{de-1})$, and $b'$ is a $p$-block of $N'$, covered by the $p$-block $B'$ of $G'$ of $p$-core $\uga'=(\ga'^{(0)}, \ldots , \ga'^{(de-1)})$ and $p$-weight $\uw'=\uw$. Suppose furthermore that $w_0 + \cdots + w_{de-1} \neq 0$. Then there is a bijection $\psi$ between the subsets $\mathcal E_{\uga,\uw}$ and $\mathcal E_{\uga',\uw}$ of ${\cal{MP}}_{r,de}$ and ${\cal{MP}}_{r',de}$ (which parametrize the irreducible characters in $B$ and $B'$ respectively) described as follows. For any $\ula=(\la^{(0)},\ldots,\la^{(de-1)}) \in \mathcal E_{\uga,\uw}$, we have $\psi(\ula)=(\Psi(\la^{(0)}), \ldots ,\Psi(\la^{(de-1)}))$, where, for each $0 \leq i \leq de-1$, $\Psi(\la^{(i)})$ is the partition defined by $\Psi(\la^{(i)})_{(p)}=\ga'^{(i)}$ and $Q_p(\Psi(\la^{(i)}))=Q_p(\la^{(i)})$.
 
With the notation of Section \ref{sec:representations}, we see that, for any $\ula \in \mathcal E_{\uga,\uw}$, we have $|C_{\psi(\ula)}|=|C_{\ula}|$ and $b_{\psi(\ula)}=b_{\ula}$. Furthermore, for any $\ula, \umu \in \mathcal E_{\uga,\uw}$, we have, with the notation of Section \ref{sec:carN}, $[\ula]=[\umu]$ if and only if $[\psi(\ula)]=[\psi(\umu)]$. In particular, $\psi$ also induces a bijection between $\Irr(b)$ and $\Irr(b')$.

Before our next definition, we need a few more pieces of notation. If $s$ and $t$ are positive integers, and if $n \in \N$, then, for any $\alpha \in {\cal{MP}}_{s,n}$, we set $t\alpha=(\alpha, \ldots, \alpha) \in {\cal{MP}}_{ts,tn}$. If $\beta=t\alpha$, then we write $\alpha=\beta/t$. 

\noindent
Finally, for any $k>0$ and any $k$-multipartition $\ula=(\la^{(0)}, \la^{(1)}, \ldots, \la^{(k-1)})$, we set $\da_p(\ula)=\da_p(\la^{(0)})\da_p(\la^{(1)}) \cdots \da_p(\la^{(k-1)})$, where, for each $0 \leq i <k$, $\da_p(\la^{(i)})$ is the $p$-sign of $\la^{(i)}$ (see \cite[\textsection 2]{morrisolsson}).

\begin{definition}
\label{def:perfisom}
With the notation above, we define an isometry $I \colon \C \Irr(b) \longrightarrow \C \Irr(b')$ by letting, for any $\ula \in \mathcal E_{\uga,\uw}$ and any $0 \leq i < | C_{\ula}|$,
$$I(\chi_{\ula,i})= \left\{ \begin{array}{ll}  \da_p(\ula) \da_p(\psi(\ula)) \chi_{\psi(\ula),i} & \mbox{if} \; | C_{\ula} | \; \mbox{is odd}, \\ \da_p(\ula) \da_p(\psi(\ula)) \chi_{\psi(\ula),i,\da_{\ula} M_{\psi(\ula)}} & \mbox{if} \; | C_{\ula} | \; \mbox{is even}, \end{array} \right. $$
where $\da_{\ula}= \da_p(\ula/ |C_{\ula}|)\da_p(\psi(\ula)/|C_{\psi(\ula)}|)$.

\end{definition}


\begin{remark}
\label{rem:welldefined}
Note that $I$ is well-defined. Indeed, if $[\ula]=[\umu]$, then $[\psi(\ula)]=[\psi(\umu)]$, so that $\da_p(\ula)=\da_p(\umu)$ and  $\da_p(\psi(\ula)) = \da_p(\psi(\umu))$. Also, by Lemma \ref{lem:carorbites}, $\chi_{\ula,i}=\chi_{\umu,i}$ for all $0 \leq i < |C_{\ula}|$. Furthermore, by the proof of Lemma \ref{lem:carorbites}, $M_{\psi(\ula)}=M_{\psi(\umu)}$ and thus $\da_{\ula} M_{\psi(\ula)}=\da_{\umu} M_{\psi(\umu)}$ (since $\da_{\ula}=\da_{\umu}$). Finally, by Remark \ref{rem:signe} and Lemma \ref{lem:carorbites}, $\chi_{\psi(\ula),i,\da_{\ula} M_{\psi(\ula)}} =\chi_{\psi(\umu),i,\da_{\umu} M_{\psi(\umu)}} $ for all $0 \leq i < |C_{\ula}|$.

\end{remark}

\subsection{Perfect isometries}
\label{sec:isomparf}

We keep the notation as in the previous section. Our aim is now to show that the isometry $I$ described in Definition \ref{def:perfisom} is actually a perfect isometry between $b$ and $b'$, thereby generalizing to complex reflection groups the results known about the symmetric groups (see \cite[Theorem 11]{Enguehard}), wreath products (see \cite[Theorem 5.4]{BrGr3}) and Weyl groups of type $B$ and $D$ (see \cite[Corollary 5.6 and Theorem 5.8]{BrGr3}). We start by recalling the definition of perfect isometry.

\begin{definition}(See \cite{Broue} and \cite[\textsection 2.5]{BrGr3})
\label{def:perfectisometry}
Let $H$ and $H'$ be finite groups, $p$ be a prime, and $(K, {\cal R}, k)$ a splitting $p$-modular system for $H$ and $H'$. Let ${\cal B}$ and ${\cal B}'$ be unions of $p$-blocks of $H$ and $H'$ respectively, and $J \colon \C \Irr({\cal B}) \longrightarrow \C \Irr({\cal B}')$ an isometry such that $J( \Z \Irr({\cal B}))=\Z \Irr({\cal B}')$. Let $(e_1, \ldots, e_n)$ be any $\C$-basis for $\C \Irr({\cal B})$ and $(e_1^{\vee}, \ldots,e_n^{\vee})$ its dual with respect to the usual hermitian product $\langle \; , \; \rangle_H$ on $\C \Irr(H)$, and let $\widehat{J}=\dis \sum_{i=1}^n \overline{e_i^{\vee}} \otimes J(\e_i)$. Then $J$ is a perfect isometry between ${\cal B}$ and ${\cal B}'$ if the following hold:

\begin{enumerate}

\item[(1)]
For every $(x,x') \in H \times H'$, $\widehat{J}(x,x') \in | C_H(x)|_p {\cal R} \cap | C_{H'}(x')|_p {\cal R} $.

\item[(2)]
If $\widehat{J}(x,x') \neq 0$, then $x$ and $x'$ are both $p$-regular or both $p$-singular.

\end{enumerate}
\end{definition}

\begin{remark}
Note that, in Definition \ref{def:perfectisometry}, $\widehat{J}$ does not in fact depend on the choice of basis for $\C \Irr({\cal B})$ (see \cite[\textsection 2.3]{BrGr3}).
\end{remark}

If we let $[\mathcal E_{\uga,\uw}]$ be a set of representatives for the $\cyc{\e}$-orbits of $\mathcal E_{\uga,\uw}$, then $\{\chi_{\ula,i}, \; \ula \in [\mathcal E_{\uga,\uw}] \; \mbox{and} \; 0 \leq i < | C_{\ula} | \}$ is a (self-dual) $\C$-basis for $\C \Irr(b)$. By (\ref{eq:chardiff}), $\{\Delta_{\ula,i}, \; \ula \in [\mathcal E_{\uga,\uw}] \; \mbox{and} \; 0 \leq i < | C_{\ula} | \}$ is also a $\C$-basis for $\C \Irr(b)$, and this is the basis we will use to prove that $I$ is a perfect isometry between $b$ and $b'$.

From (\ref{eq:chardiff}), we see that, for any $\ula , \umu \in [\mathcal E_{\uga,\uw}]$, $0 \leq i < | C_{\ula} |$ and $0 \leq j < | C_{\umu} |$, we have
$$\begin{array}{rcl} \langle \Delta_{\ula,i},\Delta_{\umu,j} \rangle_N & = & \langle \dis \sum_{k=0}^{|C_{\ula}|-1} \zeta^{d b_{\ula}ik} \chi_{\ula,k} , \sum_{\ell=0}^{|C_{\umu}|-1} \zeta^{d b_{\umu}j\ell} \chi_{\umu,\ell} \rangle_N \\ & = & \dis \sum_{k=0}^{|C_{\ula}|-1} \sum_{\ell=0}^{|C_{\umu}|-1} \zeta^{d b_{\ula}ik}   \overline{\zeta^{d b_{\umu}j\ell}} \langle \chi_{\ula,k} , \chi_{\umu,\ell} \rangle_N
\\
 & = &  \dis \sum_{k=0}^{|C_{\ula}|-1} \sum_{\ell=0}^{|C_{\umu}|-1} \zeta^{d b_{\ula}ik-d b_{\umu}j\ell} \da_{\ula,\umu} \da_{k,\ell} 
 \\
  & = & \da_{\ula,\umu} \dis \sum_{k=0}^{|C_{\ula}|-1}  \zeta^{d b_{\ula}(i-j)k} 
  \\
  & = &  \da_{\ula,\umu} \da_{i,j} |C_{\ula}|.
\end{array}$$
This shows that, for any $\ula \in [\mathcal E_{\uga,\uw}]$ and $0 \leq i < | C_{\ula} |$, we have 
\begin{equation}
\label{eq:dualdelta}
\Delta_{\ula,i}^{\vee}=\dis \frac{1}{| C_{\ula} |} \Delta_{\ula,i}.
\end{equation}

Furthermore, from (\ref{eq:chardiff}) and Definition \ref{def:perfisom}, we see that, for any $\ula \in \mathcal E_{\uga,\uw}$ and $0 \leq i < | C_{\ula} |$, we have
$$\begin{array}{rcl}
I(\Delta_{\ula,i}) & = & \dis \sum_{j=0}^{| C_{\ula} | -1} \zeta^{d b_{\ula}ij} I (\chi_{\ula,j}) 
\\
 & = & \left\{ \begin{array}{ll}  \da_p(\ula) \da_p(\psi(\ula)) \dis \sum_{j=0}^{| C_{\ula} | -1} \zeta^{d b_{\ula}ij} \chi_{\psi(\ula),j} & \mbox{if} \; | C_{\ula} | \; \mbox{is odd}, \\ \da_p(\ula) \da_p(\psi(\ula)) \dis \sum_{j=0}^{| C_{\ula} | -1} \zeta^{d b_{\ula}ij}  \chi_{\psi(\ula),j,\da_{\ula} M_{\psi(\ula)}} & \mbox{if} \; | C_{\ula} | \; \mbox{is even}. \end{array} \right. 
\end{array}$$
Now, since $|C_{\ula}|=|C_{\psi(\ula)}|$ and $b_{\ula}=b_{\psi(\ula)}$, we have that, if $|C_{\ula}|$ is odd, then
$$\dis \sum_{j=0}^{| C_{\ula} | -1} \zeta^{d b_{\ula}ij} \chi_{\psi(\ula),j} = \dis \sum_{j=0}^{| C_{\psi(\ula)} | -1} \zeta^{d b_{\psi(\ula)}ij} \chi_{\psi(\ula),j}= \Delta_{\psi(\ula),i}$$
(by (\ref{eq:chardiff})), and, if $|C_{\ula}|$ is even, then
$$\dis \sum_{j=0}^{| C_{\ula} | -1} \zeta^{d b_{\ula}ij}  \chi_{\psi(\ula),j,\da_{\ula} M_{\psi(\ula)}}= \dis \sum_{j=0}^{| C_{\psi(\ula)} | -1} \zeta^{d b_{\psi(\ula)}ij}  \chi_{\psi(\ula),j,\da_{\ula} M_{\psi(\ula)}}=\Delta_{\psi(\ula),i,\da_{\ula} M_{\psi(\ula)}}$$
(by Remark \ref{rem:signe}). And, also by Remark \ref{rem:signe}, we have $\Delta_{\psi(\ula),i,\da_{\ula} M_{\psi(\ula)}}= \da_{\ula}^i \Delta_{\psi(\ula),i}$.

This shows that, for any $\ula \in \mathcal E_{\uga,\uw}$ and $0 \leq i < | C_{\ula} |$, we have
\begin{equation}
\label{eq:imagedelta}
I(\Delta_{\ula,i})  =  \left\{ \begin{array}{ll}  \da_p(\ula) \da_p(\psi(\ula)) \Delta_{\psi(\ula),i} & \mbox{if} \; | C_{\ula} | \; \mbox{is odd}, \\ \da_p(\ula) \da_p(\psi(\ula))  \da_{\ula}^i \Delta_{\psi(\ula),i} & \mbox{if} \; | C_{\ula} | \; \mbox{is even}. \end{array} \right. 
\end{equation}

When we compute $\widehat{I}$, we will regroup characters $\Delta_{\ula,i}$ in ``slices'' according to the order modulo $e$ of the integer $b_{\ula}i$. First note that, as an additive group, we have $$\Z / e \Z = \dis \coprod_{q | e} \{ \bar{k} \in \Z / e \Z \; | \; \mbox{ord}(\bar{k})=q \}=\coprod_{q | e} \left\{ \overline{\left( \frac{e}{q}s \right)} \; | \; 0 \leq s<q \; \mbox{and} \; (s,q)=1 \right\}.$$
Since, whenever $0 \leq s <q$, we have $0 \leq \dis \frac{e}{q}s<e$, we actually obtain$$\{0, \, \ldots , \, e-1 \} = \coprod_{q | e}  \left\{ \dis \frac{e}{q}s \; | \; 0 \leq s<q \; \mbox{and} \; (s,q)=1 \right\}.$$
Our ``slices'' are described by the following.

\begin{proposition}
\label{prop:slices}
For any $0 \leq k \leq e-1$, we let $${\cal P}_{\uga,\uw,k} = \{ (\ula,i) \; | \; \ula \in \mathcal E_{\uga,\uw} \; , \; 0 \leq i < | C_{\ula} | \; \mbox{and} \; b_{\ula}i=k\}.$$
Let $q$ be the order of $k$ modulo $e$. Then the maps $\alpha \colon {\cal P}_{\uga,\uw,k} \longrightarrow \mathcal E_{\uga/q,\uw/q}$ and $\beta \colon \mathcal E_{\uga/q,\uw/q} \longrightarrow  {\cal P}_{\uga,\uw,k}$ given by $\alpha ( (\ula,i))=\ula/q$ and $\beta(\umu)=(q\umu, k/b_{\umu})$ are mutually inverse bijections.
\end{proposition}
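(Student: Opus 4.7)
The plan is to verify that $\alpha$ and $\beta$ are both well-defined and then to check they are mutually inverse by direct substitution; the well-definedness will do most of the work, while the composition identities fall out by simple arithmetic.

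The only nontrivial ingredient will be the following compatibility. For any $\umu \in \mathcal{MP}_{r/q,de/q}$ (with $b_{\umu}$ computed via the linear character of $G(de/q,1,r/q)$ of order $e/q$), the inflation $q\umu \in \mathcal{MP}_{r,de}$ should satisfy $b_{q\umu} = b_{\umu}$ and hence $|C_{q\umu}| = e/b_{\umu} = q|C_{\umu}|$. This follows because $b_{\umu}$ simply records the minimal period $d b_{\umu}$ of $\umu$ as a $(de/q)$-tuple of partitions, and this minimal period is unchanged when $\umu$ is replaced by $q\umu$.

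For $\alpha$: given $(\ula,i) \in {\cal P}_{\uga,\uw,k}$, I will use $k = b_{\ula}i$ together with $e = b_{\ula}|C_{\ula}|$ to compute $\gcd(e,k) = b_{\ula}\gcd(|C_{\ula}|,i)$; then $q = e/\gcd(e,k) = |C_{\ula}|/\gcd(|C_{\ula}|,i)$ is precisely the order of $i$ modulo $|C_{\ula}|$. In particular $q \mid |C_{\ula}|$, so $\ula$ has period $de/q$, the element $\ula/q$ is defined, and by the inherited periodicity of $\uga$ and $\uw$ it lies in $\mathcal E_{\uga/q,\uw/q}$. For $\beta$: given $\umu \in \mathcal E_{\uga/q,\uw/q}$, the divisor $b_{\umu}$ of $e/q = \gcd(e,k)$ certainly divides $k$, so $k/b_{\umu} \in \Z$; by the preliminary one has $b_{q\umu}\cdot(k/b_{\umu}) = k$ and $0 \leq k/b_{\umu} < e/b_{\umu} = |C_{q\umu}|$, so that $\beta(\umu) \in {\cal P}_{\uga,\uw,k}$.

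Finally, $\alpha(\beta(\umu)) = (q\umu)/q = \umu$ is immediate, and $\beta(\alpha((\ula,i))) = (q(\ula/q),\, k/b_{\ula/q}) = (\ula,\, k/b_{\ula}) = (\ula,i)$, using $b_{\ula/q} = b_{\ula}$ and the defining relation $k = b_{\ula}i$. The only real subtlety is the inflation/deflation compatibility $b_{q\umu} = b_{\umu}$; once this is set up, everything reduces to routine arithmetic.
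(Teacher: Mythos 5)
Your proof is correct and follows essentially the same route as the paper's: establish that $\alpha$ and $\beta$ are well defined (the key points being $q \mid |C_{\ula}|$, $b_{q\umu}=b_{\umu}$ and $|C_{q\umu}|=q|C_{\umu}|$) and then verify the two compositions by direct substitution. The only cosmetic differences are that you obtain $q \mid |C_{\ula}|$ via a gcd computation where the paper writes $k=(e/q)s$ with $(s,q)=1$, and that you justify $b_{q\umu}=b_{\umu}$ by a minimal-period argument rather than deducing it from $|C_{q\umu}|=q|C_{\umu}|$.
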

\begin{remark}
\label{rem:slices}
Recall that $\mathcal E_{\uga/q,\uw/q}$ is exactly the set of multipartitions labelling the irreducible characters which belong to the $p$-block of $G(de/q,1,r/q)$ with $p$-core $\uga/q$ and $p$-weight $\uw/q$. 
\end{remark}
\begin{proof}
We start by showing that $\alpha$ and $\beta$ are indeed defined.

If $(\ula,i) \in {\cal P}_{\uga,\uw,k} $, then $b_{\ula}i=k$. Since $k=\dis \frac{e}{q}s$ for some $0 \leq s <q$ with $(s,q)=1$, we have $b_{\ula}i=\dis \frac{e}{q}s$. Hence $qi=s \dis \frac{e}{b_{\ula}}=s | C_{\ula} |$. Since $(s,q)=1$, this shows that $q$ divides $|C_{\ula}|$. Hence $\ula/q$ is indeed defined, and so are $\uga/q$ and $\uw/q$, and $\ula/q$ certainly has $p$-core $\uga/q$ and $p$-weight $\uw/q$. Thus the map $\alpha \colon {\cal P}_{\uga,\uw,k} \longrightarrow \mathcal E_{\uga/q,\uw/q}$ is defined.

If, on the other hand, $\umu \in \mathcal E_{\uga/q,\uw/q}$, then $q \umu$ certainly is defined, and $q \umu \in \mathcal E_{\uga,\uw}$. Furthermore, by definition, $b_{\umu}=\dis \frac{e/q}{| C_{\umu} |}$, so that $b_{\umu} $ divides $e/q$, and also $(e/q)s=k$, whence $k/b_{\umu}$ is an integer. Moreover, we have $|C_{q \umu}|=q . |C_{\umu}|$, and, since $0 \leq k <e$, we have$$0 \leq \dis \frac{k}{b_{\umu}} < \frac{e}{b_{\umu}}=q \frac{e/q}{b_{\umu}}=q. |C_{\umu}|=|C_{q \umu}|.$$
Finally, since $b_{\umu}= \dis \frac{e/q}{|C_{\umu}|}= \frac{e}{q.|C_{\umu}|}=\frac{e}{|C_{q \umu}|}=b_{q \umu}$, we have $b_{q \umu} \dis \frac{k}{b_{\umu}}=k$, whence $(q\umu, k/b_{\umu})$ is defined, and $(q\umu, k/b_{\umu}) \in {\cal P}_{\uga,\uw,k} $.

It only remains to show that $\alpha$ and $\beta$ are mutual inverses. For any $\umu \in \mathcal E_{\uga/q,\uw/q}$, we have $(\alpha \circ \beta)(\umu)=\alpha ((q\umu, k/b_{\umu}))=q \umu/q=\umu$. And, for any $(\ula,i) \in {\cal P}_{\uga,\uw,k}$, we have
$$(\beta \circ \alpha)((\ula,i))=\beta(\ula/q)=(q \ula/q,\dis \frac{k}{b_{\ula/q}})=(\ula,\dis \frac{k}{b_{\ula/q}})=(\ula,\dis \frac{k}{b_{\ula}})$$
(since, as we've seen above, $b_{\ula/q}=b_{\ula}$). Finally, since $(\ula,i) \in {\cal P}_{\uga,\uw,k}$, we have $\dis \frac{k}{b_{\ula}}=i$, whence $(\beta \circ \alpha)((\ula,i))=(\ula,i)$. This concludes the proof.

\end{proof}

\begin{remark}
\label{rem:ordredei}
Note that, with the notation of Proposition \ref{prop:slices}, if $(\ula,i) \in {\cal P}_{\uga,\uw,k} $ and $k=b_{\ula}i$ has order $q$ modulo $e$ (written $\mbox{ord}_e(k)=q$), then $i$ has order $q$ modulo $e/b_{\ula}=|C_{\ula}|$ (written $\mbox{ord}_{|C_{\ula}|}(i)=q$). Hence there exists $s$ such that $(s,q)=1$ and $i=s | C_{\ula} |/q$. Suppose furthermore that $| C_{\ula}|$ is even. If $q$ is even, then $(s,q)=1$ implies that $s$ is odd, so that $| C_{\ula} |/q$ and $i=s | C_{\ula} |/q$ have the same parity. If, on the other hand, $q$ is odd, then, since $| C_{\ula}|$ is even, $| C_{\ula} |/q$ is even, and so is $i=s | C_{\ula} |/q$. This shows that, whenever $| C_{\ula}|$ is even, $| C_{\ula} |/q$ and $i$ have the same parity. Now we have $\ula/q=(| C_{\ula} |/q) \cdot \ula/ | C_{\ula} |$. Taking $p$-signs, we have $\da_p(\ula/q)=\da_p((| C_{\ula} |/q) \cdot \ula/ | C_{\ula} |)=\da_p(\ula/ | C_{\ula} |)^{| C_{\ula} |/q}$. And, since $| C_{\ula} |/q$ and $i$ have the same parity, we obtain
\begin{equation}
\label{eq:signe}
\da_p(\ula/ | C_{\ula} |)^i=\da_p(\ula/q) \; \mbox{whenever} \; (\ula,i) \in {\cal P}_{\uga,\uw,k} \; \mbox{and} \; \mbox{ord}_e(k)=q.
\end{equation}

\end{remark}
From this, we easily deduce the following.

\begin{lemma}
\label{lem:imagedelta}
If $I$ is the map described in Definition \ref{def:perfisom} and $(\ula,i) \in {\cal P}_{\uga,\uw,k} $, where $k=b_{\ula}i$ has order $q$ modulo $e$, then $I(\Delta_{\ula,i})=\da_p(\ula/q)\da_p(\psi(\ula)/q) \Delta_{\psi(\ula),i}$.
\end{lemma}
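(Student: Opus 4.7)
The plan is to derive the claim from Equation~(\ref{eq:imagedelta}) by matching the sign factors appearing there with $\da_p(\ula/q)\da_p(\psi(\ula)/q)$, treating the cases $|C_{\ula}|$ odd and $|C_{\ula}|$ even separately. In both cases, the key observation is that the condition $(\ula,i)\in{\cal P}_{\uga,\uw,k}$ forces $q$ to divide $|C_{\ula}|$ (this was already established in the proof of Proposition~\ref{prop:slices}), so that $\ula/q$ is well-defined and $\ula/q=(|C_{\ula}|/q)\cdot(\ula/|C_{\ula}|)$. Consequently, from the multiplicativity of $\da_p$ on concatenations one gets
$$\da_p(\ula)=\da_p(\ula/|C_{\ula}|)^{|C_{\ula}|}\quad\text{and}\quad\da_p(\ula/q)=\da_p(\ula/|C_{\ula}|)^{|C_{\ula}|/q},$$
and analogous formulas hold for $\psi(\ula)$ since $|C_{\psi(\ula)}|=|C_{\ula}|$.

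First I would handle the case $|C_{\ula}|$ odd. Here $q$ divides an odd integer, so both $|C_{\ula}|$ and $|C_{\ula}|/q$ are odd, and the formulas above collapse to $\da_p(\ula)=\da_p(\ula/q)=\da_p(\ula/|C_{\ula}|)$, with the same identity for $\psi(\ula)$. Substituting into the odd branch of~(\ref{eq:imagedelta}) immediately yields $I(\Delta_{\ula,i})=\da_p(\ula/q)\da_p(\psi(\ula)/q)\Delta_{\psi(\ula),i}$, as desired.

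Next I would handle the case $|C_{\ula}|$ even. Here $\da_p(\ula)=\da_p(\ula/|C_{\ula}|)^{|C_{\ula}|}=1$, and likewise $\da_p(\psi(\ula))=1$, so the scalar in the even branch of~(\ref{eq:imagedelta}) reduces to $\da_{\ula}^i=\da_p(\ula/|C_{\ula}|)^i\da_p(\psi(\ula)/|C_{\psi(\ula)}|)^i$. At this point I would invoke Remark~\ref{rem:ordredei}, and more specifically the identity~(\ref{eq:signe}), which says that under exactly the hypothesis $(\ula,i)\in{\cal P}_{\uga,\uw,k}$ with $\mathrm{ord}_e(k)=q$ one has $\da_p(\ula/|C_{\ula}|)^i=\da_p(\ula/q)$; applied to both $\ula$ and $\psi(\ula)$ this rewrites $\da_{\ula}^i$ as $\da_p(\ula/q)\da_p(\psi(\ula)/q)$, completing the even case.

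The main (and essentially only) subtle point is the sign identity~(\ref{eq:signe}) already isolated in Remark~\ref{rem:ordredei}: its parity analysis ensures that $|C_{\ula}|/q$ and $i$ have the same parity when $|C_{\ula}|$ is even, which is precisely what is needed to convert the exponent $i$ into the exponent $|C_{\ula}|/q$ appearing in $\da_p(\ula/q)$. Once~(\ref{eq:signe}) is granted, the lemma is a direct bookkeeping check on~(\ref{eq:imagedelta}), and the two cases together give the uniform formula in the statement.
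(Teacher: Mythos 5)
Your proposal is correct and follows essentially the same route as the paper: both arguments substitute into~(\ref{eq:imagedelta}), dispose of the odd case by a parity-of-exponent computation with $\da_p$, and settle the even case by noting $\da_p(\ula)=\da_p(\psi(\ula))=1$ and invoking the sign identity~(\ref{eq:signe}) from Remark~\ref{rem:ordredei}. The only cosmetic difference is that in the odd case you route both $\da_p(\ula)$ and $\da_p(\ula/q)$ through powers of $\da_p(\ula/|C_{\ula}|)$, whereas the paper writes $\da_p(\ula)=\da_p(\ula/q)^q$ directly.
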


\begin{proof}
By (\ref{eq:imagedelta}), we know that
$$I(\Delta_{\ula,i})  =  \left\{ \begin{array}{ll}  \da_p(\ula) \da_p(\psi(\ula)) \Delta_{\psi(\ula),i} & \mbox{if} \; | C_{\ula} | \; \mbox{is odd}, \\ \da_p(\ula) \da_p(\psi(\ula))  \da_{\ula}^i \Delta_{\psi(\ula),i} & \mbox{if} \; | C_{\ula} | \; \mbox{is even}. \end{array} \right. $$
If $| C_{\ula} |=| C_{\psi(\ula)} |$ is odd, then $q$ must be odd (since $q$ divides $| C_{\ula} |$), so that $\da_p(\ula) = \da_p(q \cdot \ula/q) = \da_p(\ula/q)^q=\da_p(\ula/q)$ and $\da_p(\psi(\ula))=\da_p(q \cdot \psi(\ula)/q)=\da_p(\psi(\ula)/q)^q=\da_p(\psi(\ula)/q)$. Hence, in this case, $\da_p(\ula) \da_p(\psi(\ula))=\da_p(\ula/q) \da_p(\psi(\ula)/q)$.

If, on the other hand, $| C_{\ula} |=| C_{\psi(\ula)} |$ is even, then $\da_p(\ula)=\da_p(|C_{\ula}| \cdot \ula / |C_{\ula}|)=\da_p(\ula / |C_{\ula}|)^{|C_{\ula}|}=1$ and $\da_p(\psi(\ula))=\da_p(|C_{\ula}| \cdot \psi(\ula) / |C_{\ula}|)=\da_p(\psi(\ula) / |C_{\ula}|)^{|C_{\ula}|}=1$. And, by (\ref{eq:signe}) (and since $(\psi(\ula),i) \in {\cal P}_{\uga',\uw,k} $), $$ \da_{\ula}^i= \da_p(\ula/ |C_{\ula}|)^i \da_p(\psi(\ula)/|C_{\psi(\ula)}|)^i=\da_p(\ula/q)\da_p(\psi(\ula)/q).$$
Hence, in this case, $\da_p(\ula) \da_p(\psi(\ula))  \da_{\ula}^i=\da_p(\ula/q)\da_p(\psi(\ula)/q)$.
\end{proof}

We can now state and prove our main result.

\begin{theorem}
\label{thm:perfisom}
Take any positive integers $d$, $e$, $r$ and $r'$, and a prime $p$ not dividing $de$. Let $G=G(de,1,r)$, $N=G(de,e,r)$, $G'=G(de,1,r')$ and $N'=G(de,e,r')$. Suppose $b$ is a $p$-block of $N$, covered by the $p$-block $B$ of $G$ of $p$-core $\uga$ and $p$-weight $\uw$, and $b'$ is a $p$-block of $N'$, covered by the $p$-block $B'$ of $G'$ of $p$-core $\uga'$ and $p$-weight $\uw'=\uw$. Then there is a perfect isometry between $b$ and $b'$.
\end{theorem}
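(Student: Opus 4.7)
The plan is to verify the two conditions of Definition~\ref{def:perfectisometry} for the isometry $I$ of Definition~\ref{def:perfisom} by reducing the verification, slice by slice, to the already-known perfect isometries for wreath products. Specifically, for each divisor $q$ of $e$, \cite[Theorem~5.4]{BrGr3} provides a perfect isometry $J_q$ between the $p$-blocks of $G(de/q,1,r/q)$ and $G(de/q,1,r'/q)$ having $p$-core $\uga/q$ and $p$-weight $\uw/q$ (when these exist), defined on irreducible characters by $J_q(\widetilde\chi_{\umu}) = \da_p(\umu)\da_p(\psi(\umu))\widetilde\chi_{\psi(\umu)}$. The goal is to write $\widehat{I}$ as an integer linear combination of the quantities $\widehat{J_q}$ evaluated at carefully chosen arguments.

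The first step is to expand $\widehat{I}(x,x')$ in the basis $\{\Delta_{\ula,i} \mid \ula \in [\mathcal E_{\uga,\uw}],\, 0\le i < |C_{\ula}|\}$, using (\ref{eq:dualdelta}) for the dual basis. Spreading the orbit sum via Corollary~\ref{cor:cardifforbites} and grouping pairs $(\ula,i)$ by $k=b_{\ula}i\in\{0,\ldots,e-1\}$ yields
\begin{equation*}
\widehat{I}(x,x') = \frac{1}{e}\sum_{k=0}^{e-1}\sum_{(\ula,i)\in \mathcal P_{\uga,\uw,k}} \overline{\Delta_{\ula,i}(x)}\,I(\Delta_{\ula,i})(x').
\end{equation*}
Evaluating at conjugacy class representatives $x=g_{\ueta,j}$ and $x'=g_{\ueta',j'}$ from Theorem~\ref{th:classesGeder}: Proposition~\ref{prop:conjvaleurs} introduces a phase $\omega^{k(j'-j)}$; for $q=\mathrm{ord}_e(k)$, the bijection of Proposition~\ref{prop:slices} re-indexes the inner sum by $\umu\in\mathcal E_{\uga/q,\uw/q}$; Theorem~\ref{th:valcar2} together with Remark~\ref{rk:valeurs} gives $\Delta_{q\umu,\,k/b_{\umu}}(g_{\ueta}) = q^{\ell(\ueta)}\widetilde\chi_{\umu}\bigl(g_{\ueta}^{(q)}\bigr)$ whenever $q$ divides every $|\eta_u|$ and every cycle product of $g_{\ueta}$ lies in $\mathcal U_{de/q}$ (and $0$ otherwise); Lemma~\ref{lem:imagedelta} produces exactly the sign $\da_p(\umu)\da_p(\psi(\umu))$ matching the definition of $J_q$.

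Combining these ingredients yields a formula of the shape
\begin{equation*}
\widehat{I}(g_{\ueta,j},g_{\ueta',j'}) = \frac{1}{e}\sum_{q\mid e} q^{\ell(\ueta)+\ell(\ueta')}\, c_q(j'-j)\,\widehat{J_q}\bigl(g_{\ueta}^{(q)},g_{\ueta'}^{(q)}\bigr),
\end{equation*}
where the sum is restricted to those $q$ for which the non-vanishing conditions hold on both sides, and $c_q(m) = \sum_{\mathrm{ord}_e(k)=q} \omega^{km}$ is a Ramanujan sum, hence a rational integer. Each $\widehat{J_q}(g_{\ueta}^{(q)},g_{\ueta'}^{(q)})$ lies in $|C_{G(de/q,1,r/q)}(g_{\ueta}^{(q)})|_p\,{\cal R} \cap |C_{G(de/q,1,r'/q)}(g_{\ueta'}^{(q)})|_p\,{\cal R}$ by the perfect isometry property of $J_q$, and $p\nmid de$ makes $1/e$, $q$ and the integers $c_q(j'-j)$ all $p$-adic units or $p$-regular rational integers. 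The separation condition transfers similarly: since $p\nmid e$, the element $g_{\ueta,j}$ is $p$-regular if and only if its permutation part is, equivalently if and only if $g_{\ueta}^{(q)}$ is $p$-regular for the relevant $q$.

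The main obstacle is the bookkeeping needed to match centralizer $p$-parts in $N = G(de,e,r)$ with those in the wreath products $G(de/q,1,r/q)$: for $x=g_{\ueta,j}$ with $\ueta\in\mathcal P_{r,de,q_1}$, the centralizer $C_N(x)$ differs from $C_{G(de/q_1,1,r/q_1)}(g_{\ueta}^{(q_1)})$ through the Clifford correspondence and the action of $\langle\mathfrak g\rangle$, and one must verify that these extra factors contribute only $p$-units. Once this is confirmed, the $p$-integrality of each $\widehat{J_q}(g_{\ueta}^{(q)},g_{\ueta'}^{(q)}) / |C_{G(de/q,1,r/q)}(g_{\ueta}^{(q)})|_p$ combines with the integer factors $c_q(j'-j)$, $q^{\ell(\ueta)+\ell(\ueta')}$, and the $p$-adic unit $1/e$ to produce the required $p$-integrality of $\widehat{I}(x,x')/|C_N(x)|_p$, completing the verification.
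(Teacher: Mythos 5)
Your outline follows the paper's own proof essentially step for step: expand $\widehat I$ in the $\Delta$-basis using (\ref{eq:dualdelta}) and Corollary \ref{cor:cardifforbites}, slice by $k=b_{\ula}i$ via Proposition \ref{prop:slices}, reduce through Theorem \ref{thm:memeordre}, Theorem \ref{th:valcar2}, Remark \ref{rk:valeurs} and Lemma \ref{lem:imagedelta} to the wreath-product perfect isometries $\widehat J_q$ of \cite[Theorem 5.4]{BrGr3}, and observe that $1/e$, the powers of $q$ and the root-of-unity phases are harmless because $p\nmid de$. (Packaging the phase sums as Ramanujan sums $c_q(j'-j)$ is a pleasant cosmetic variant; the paper simply notes that the individual roots of unity lie in ${\cal R}$.) Two points, however, are not covered.

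First, the case $\uw=(0,\ldots,0)$ is missing, and it cannot be absorbed into your argument: the bijection $\psi$ and the isometry $I$ of Definition \ref{def:perfisom} are only constructed under the hypothesis $w_0+\cdots+w_{de-1}\neq 0$ (and only then does $B$ cover a unique block of $N$). The paper treats defect zero separately: $b=\{\chi\}$, $b'=\{\chi'\}$ with $\chi,\chi'$ vanishing on $p$-singular elements and projective indecomposable, so $\widehat I=\overline\chi\otimes\chi'$ satisfies both properties by \cite[Lemma (2.21)]{Navarro}. Second, the centralizer matching that you flag as ``the main obstacle'' and leave unverified is the one substantive computation in Property (1), and it needs to be carried out rather than assumed. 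It is shorter than you fear: since $[G:N]=e$ is prime to $p$, one has $|C_N(x)|_p=|C_G(x)|_p=|C_G(g_{\ueta})|_p$, so there is no Clifford-theoretic correction from $\langle\mathfrak g\rangle$ to control --- one works with centralizers in $G$, not in $N$. Then the formula $|C_{G(de,1,r)}(g_{\ueta})|=\prod_{i,k}\eta_i^{\sharp k}!\,(kde)^{\eta_i^{\sharp k}}$ of \cite[Lemma 4.2.10]{James-Kerber} gives, for $g_{\ueta}$ $q$-good, $|C_{G(de,1,r)}(g_{\ueta})|=q^{2s}\,|C_{G(de/q,1,r/q)}(g_{\ueta}^{(q)})|$, whence $|C_N(x)|_p=|C_{G(de/q,1,r/q)}(g_{\ueta}^{(q)})|_p$ because $(p,q)=1$. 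With that identity in hand, dividing your expression for $e\widehat I(x,x')$ by $|C_N(x)|_p$ and invoking the perfect-isometry property of each $J_q$ completes the argument exactly as in (\ref{eq:Ichapeau}) and the lines following it.
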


\begin{proof}
First note that, if $\uw=(0, \ldots 0)$, then both $b$ and $b'$ are $p$-blocks of defect 0, so that $b=\{ \chi \}$ and $b'=\{ \chi'\}$ for some irreducible characters $\chi$ and $\chi'$ (of $N$ and $N'$ respectively) which vanish on $p$-singular elements. If we define $I \colon \C \Irr(b) \longrightarrow \C \Irr(b')$ by $I(\chi)=\chi'$, then, with the notation of Definition \ref{def:perfectisometry}, we have $\widehat{I}=\overline{\chi} \otimes \chi'$. Since $\chi$ and $\chi'$ vanish on $p$-singular elements, we have $\widehat{I}(x,x')=\overline{\chi(x)}\chi'(x') \neq 0$ only if $x$ and $x'$ are both $p$-regular, so that property (2) of Definition \ref{def:perfectisometry} holds. Furthermore, since $b=\{ \chi \}$ and $b'=\{ \chi'\}$, $\chi$ and $\chi'$ are actually projective indecomposable characters (of $N$ and $N'$ respectively). Hence, by 
\cite[Lemma (2.21)]{Navarro}, for all $(x,x') \in N \times N'$, $\dis \frac{\chi(x)}{| C_N(x)|_p} \in {\cal R}$ and $\dis \frac{\chi'(x')}{| C_{N'}(x')|_p} \in {\cal R}$. Property (1) of Definition \ref{def:perfectisometry} immediately follows. This shows that, if $\uw=(0, \ldots 0)$, then $b$ and $b'$ are perfectly isometric.

We therefore now suppose that $\uw \neq (0, \ldots 0)$. Let $I \colon \C \Irr(b) \longrightarrow \C \Irr(b')$ be the map described in Definition \ref{def:perfisom}. We will decompose $\widehat{I}$ using the $\C$-basis $\{\Delta_{\ula,i}, \; \ula \in [\mathcal E_{\uga,\uw}] \; \mbox{and} \; 0 \leq i < | C_{\ula} | \}$ for $\C \Irr(b)$. We have, by Definition \ref{def:perfectisometry}, $ e \widehat{I}  =  e \dis \sum_{\ula \in [\mathcal E_{\uga,\uw}]} \sum_{i=0}^{| C_{\ula} | -1} \overline{\Delta_{\ula,i}^{\vee}} \otimes I(\Delta_{\ula,i}) $, so that, by (\ref{eq:dualdelta}),
$$ e \widehat{I}  = e \dis \sum_{\ula \in [\mathcal E_{\uga,\uw}]} \sum_{i=0}^{| C_{\ula} | -1} \frac{1}{| C_{\ula} |} \overline{\Delta_{\ula,i}} \otimes I(\Delta_{\ula,i}).$$
Since $| [ \ula ] |=b_{\ula}$ and $b_{\ula}| C_{\ula} |=e$, Corollary \ref{cor:cardifforbites} gives
$$ e \widehat{I}  =  \dis \sum_{\ula \in \mathcal E_{\uga,\uw}} \frac{1}{b_{\ula}} \sum_{i=0}^{| C_{\ula} | -1} \frac{1}{| C_{\ula} |} \overline{\Delta_{\ula,i}} \otimes I(\Delta_{\ula,i})= \dis \sum_{\ula \in \mathcal E_{\uga,\uw}} \sum_{i=0}^{| C_{\ula} | -1}  \overline{\Delta_{\ula,i}} \otimes I(\Delta_{\ula,i}) .$$
Using our ``slices'', we obtain

\begin{eqnarray*} e \widehat{I} & = & \dis \sum_{k=0}^{e-1} \; \; \; \sum_{(\ula,i) \in {\cal P}_{\uga,\uw,k} } \overline{\Delta_{\ula,i}} \otimes I(\Delta_{\ula,i}) 
\\
& = & \dis \sum_{q | e} \; \; \sum_{0 \leq k< e \atop \mbox{\tiny{ord}}_e(k)=q}  \; \; \sum_{(\ula,i) \in {\cal P}_{\uga,\uw,k} } \overline{\Delta_{\ula,i}} \otimes I(\Delta_{\ula,i}) 
\\
& = & \dis \sum_{q | e}  \sum_{0 \leq k< e \atop \mbox{\tiny{ord}}_e(k)=q}  \sum_{(\ula,i) \in {\cal P}_{\uga,\uw,k} } \overline{\Delta_{\ula,i}} \otimes \da_p(\ula/q)\da_p(\psi(\ula)/q) \Delta_{\psi(\ula),i}
\end{eqnarray*}
(by Lemma \ref{lem:imagedelta}).

\medskip

Now take any $(x,x') \in N \times N'$. Write $x= {}^gg_{\ueta}$ and $x'={}^{g'}g_{\ueta'}$, where $\ueta \in {\cal{MP}}_{r,de}$, $\ueta' \in {\cal{MP}}_{r',de}$, $g_{\ueta} \in N$ and $g_{\ueta'}\in N'$ are as in Convention \ref{conv:defrep}, $g \in G$ and $g' \in G'$.

\noindent
Take any $0 \leq k \leq e-1$, and $(\ula,i) \in {\cal P}_{\uga,\uw,k}$. 
We have $\varepsilon^k=\varepsilon^{b_{\underline\lambda}i}$
(since $(\ula,i) \in {\cal P}_{\uga,\uw,k}$). Hence,
Proposition~\ref{prop:conjvaleurs} gives
\begin{equation}
\label{eq:rootofunity} 
\Delta_{\ula,i}(x)=\e^{k}(g)\Delta_{\ula,i}(g_{\ueta}).
\end{equation}
Similarly, since $(\psi(\ula),i) \in {\cal P}_{\uga',\uw,k}$, we have
\begin{equation}
\label{eq:rootofunity'} 
\Delta_{\psi(\ula),i}(x')=\e^{k}(g')\Delta_{\psi(\ula),i}(g_{\ueta'}).
\end{equation}

Now, if $\mbox{ord}_e(k)=q$, then, for all $(\ula,i) \in {\cal P}_{\uga,\uw,k}$, we have, by Theorem \ref{thm:memeordre},
\begin{equation}
\label{eq:delta}
\Delta_{\ula,i}(g_{\ueta}) = \Delta_{\ula,|C_{\ula}|/q}(g_{\ueta}).
\end{equation}
Furthermore, still supposing $\mbox{ord}_e(k)=q$, Proposition \ref{prop:slices} (applied twice) shows that
\begin{equation}
\label{eq:Q}
Q \colon \left\{ \begin{array}{ccc} {\cal P}_{\uga,\uw,k} & \longrightarrow & {\cal P}_{\uga,\uw,e/q}  \\ (\ula,i) & \longmapsto & (\ula, |C_{\ula}|/q) \end{array} \right. \; \;  \mbox{is a bijection.}
\end{equation}

Similarly, for all $(\ula,i) \in {\cal P}_{\uga,\uw,k}$, we have, by Theorem \ref{thm:memeordre},
\begin{equation}
\label{eq:delta'}
\Delta_{\psi(\ula),i}(g_{\ueta'}) = \Delta_{\psi(\ula),|C_{\ula}|/q}(g_{\ueta'})
\end{equation}
and, by Proposition \ref{prop:slices},
\begin{equation}
\label{eq:Q'}
Q' \colon \left\{ \begin{array}{ccc} {\cal P}_{\uga',\uw,k} & \longrightarrow & {\cal P}_{\uga',\uw,e/q}  \\ (\psi(\ula),i) & \longmapsto & (\psi(\ula), |C_{\ula}|/q) \end{array} \right. \; \;  \mbox{is a bijection.}
\end{equation}

Note also that, by Proposition \ref{prop:slices},
\begin{equation}
\label{eq:alpha}
S \colon \left\{ \begin{array}{ccc} {\cal P}_{\uga,\uw,e/q} & \longrightarrow & {\cal E}_{\uga/q,\uw/q}  \\ (\ula, |C_{\ula}|/q) & \longmapsto & \ula/q \end{array} \right. \quad \mbox{and}
\end{equation}
$$
S' \colon \left\{ \begin{array}{ccc} {\cal P}_{\uga',\uw,e/q} & \longrightarrow & {\cal E}_{\uga'/q,\uw/q}  \\ (\psi(\ula), |C_{\ula}|/q) & \longmapsto & \psi(\ula)/q = \psi(\ula/q)  \end{array} \right. \; \;  \mbox{are bijections.}$$

Write $g_{\ueta}=(z;\sigma)$ with $\sigma=\sa_1 \cdots \sa_s \in \sym_r$ and $g_{\ueta'}=(z';\sigma')$ with $\sigma'=\sa'_1 \cdots \sa'_{s'} \in \sym_{r'}$ as in Convention \ref{conv:defrep}. By Theorem \ref{th:valcar2}, we see that, if there is $1 \leq u \leq de-1$ such that $q$ does not divide $|\eta_u|$, or if there is $1 \leq j \leq s$ such that $\xi_j \not \in {\cal U}_{de/q}$ (in which case we say that $g_{\ueta}$ is $q$-bad), then, for all $\ula \in {\cal E}_{\uga,\uw}$, we have $\Delta_{\ula,|C_{\ula}|/q}(g_{\ueta})=0$. If, on the other hand, $q$ divide $|\eta_u|$ for all $1 \leq u \leq de-1$ and $\xi_j  \in {\cal U}_{de/q}$ for all $1 \leq j \leq s$ (in which case we say that $g_{\ueta}$ is $q$-good), then, with the notation of Remark \ref{rk:valeurs}, for all $\ula \in {\cal E}_{\uga,\uw}$, we have (by (\ref{eq:valeurs})) 
\begin{equation}
\label{eq:value}
\Delta_{\ula,|C_{\ula}|/q}(g_{\ueta})=q^{s}\widetilde{\chi}_{\ula/q}(g_{\underline\eta}^{(q)}).
\end{equation}
Similarly, if $g_{\ueta'}$ is $q$-bad, then, for all $\ula \in {\cal E}_{\uga,\uw}$, we have $\Delta_{\psi(\ula),|C_{\ula}|/q}(g_{\ueta'})=0$, while, if $g_{\ueta'}$ is $q$-good, then, with the notation of Remark \ref{rk:valeurs}, for all $\ula \in {\cal E}_{\uga,\uw}$, we have (by (\ref{eq:valeurs})) 
\begin{equation}
\label{eq:value'}
\Delta_{\psi(\ula),|C_{\ula}|/q}(g_{\ueta'})=q^{s'}\widetilde{\chi}_{\psi(\ula)/q}(g_{\ueta'}^{(q)}).
\end{equation}

Recall that $e \widehat{I}=\dis \sum_{q | e}  \sum_{0 \leq k< e \atop \mbox{\tiny{ord}}_e(k)=q}  \sum_{(\ula,i) \in {\cal P}_{\uga,\uw,k} } \overline{\Delta_{\ula,i}} \otimes \da_p(\ula/q)\da_p(\psi(\ula)/q) \Delta_{\psi(\ula),i}$.

Writing $\da_p^{\ula,q}$ for $\da_p(\ula/q)\da_p(\psi(\ula)/q)$ whenever $q | e$, we therefore have
$$e \widehat{I}(x,x') = \dis \sum_{q | e}  \sum_{0 \leq k< e \atop \mbox{\tiny{ord}}_e(k)=q}  \sum_{(\ula,i) \in {\cal P}_{\uga,\uw,k} } \da_p^{\ula,q} \overline{\Delta_{\ula,i}(x)}  \Delta_{\psi(\ula),i}(x').$$
By (\ref{eq:rootofunity}) and (\ref{eq:rootofunity'}), this gives
$$e \widehat{I}(x,x') = \dis \sum_{q | e}  \sum_{0 \leq k< e \atop \mbox{\tiny{ord}}_e(k)=q} \overline{\e^k(g)} \e^{k'}(g') \sum_{(\ula,i) \in {\cal P}_{\uga,\uw,k} } \da_p^{\ula,q} \overline{\Delta_{\ula,i}(g_{\ueta})}  \Delta_{\psi(\ula),i}(g_{\ueta}').$$
By (\ref{eq:delta}), (\ref{eq:Q}), (\ref{eq:delta'}) and (\ref{eq:Q'})), we obtain
$$e \widehat{I}(x,x') = \dis \sum_{q | e}  \sum_{0 \leq k< e \atop \mbox{\tiny{ord}}_e(k)=q} \overline{\e^k(g)} \e^{k'}(g') \sum_{(\ula,|C_{\ula}|/q) \in {\cal P}_{\uga,\uw,e/q} } \da_p^{\ula,q} \overline{\Delta_{\ula,|C_{\ula}|/q}(g_{\ueta})}  \Delta_{\psi(\ula),|C_{\ula}|/q}(g_{\ueta}') .$$
Using (\ref{eq:value}) and (\ref{eq:value'})), we get
$$e \widehat{I}(x,x') = \dis \sum_{q | e \atop{g_{\ueta}, g_{\ueta'} \atop \mbox{\tiny{$q$-good}}}}  \sum_{0 \leq k< e \atop \mbox{\tiny{ord}}_e(k)=q} \overline{\e^k(g)} \e^{k'}(g') \sum_{(\ula,|C_{\ula}|/q) \in {\cal P}_{\uga,\uw,e/q}} \da_p^{\ula,q} q^s \overline{\widetilde{\chi}_{\ula/q}(g_{\underline\eta}^{(q)})}  q^{s'}\widetilde{\chi}_{\psi(\ula)/q}(g_{\ueta'}^{(q)})$$
and, by (\ref{eq:alpha}), this finally gives
$$e \widehat{I}(x,x') =\dis \sum_{q | e \atop{g_{\ueta}, g_{\ueta'} \atop \mbox{\tiny{$q$-good}}}} q^s q^{s'}\sum_{0 \leq k< e \atop \mbox{\tiny{ord}}_e(k)=q} \overline{\e^k(g)} \e^{k'}(g') \sum_{\umu \in {\cal E}_{\uga/q,\uw/q} } \da_p(\umu) \da_p(\psi(\umu))  \overline{\widetilde{\chi}_{\umu}(g_{\underline\eta}^{(q)})}  \widetilde{\chi}_{\psi(\umu)}(g_{\ueta'}^{(q)}).$$

We can rewrite this as 
\begin{equation}
\label{eq:Ichapeau}
e \widehat{I}(x,x') =\dis \sum_{q | e \atop{g_{\ueta}, g_{\ueta'} \atop \mbox{\tiny{$q$-good}}}} q^s q^{s'}\sum_{0 \leq k< e \atop \mbox{\tiny{ord}}_e(k)=q} \overline{\e^k(g)} \e^{k'}(g')  \widehat{J}_{q,\uga,\uga',\uw} (g_{\ueta}^{(q)},g_{\ueta'}^{(q)}),
\end{equation}
where, for any $q$ dividing $e$ (and such that $\uga/q$, $\uga'/q$ and $\uw/q$ are defined), $J_{q,\uga,\uga',\uw} $ is the perfect isometry described in \cite[Theorem 5.4]{BrGr3} between the $p$-block $\beta$ of $G(de/q,1,r/q)$ with $p$-core $\uga/q$ and $p$-weight $\uw/q$ and the $p$-block $\beta'$ of $G(de/q,1,r'/q)$ with $p$-core $\uga'/q$ and (same) $p$-weight $\uw/q$.

\medskip
\noindent
We now turn to Properties (1) and (2) of Definition \ref{def:perfectisometry}.

\noindent
Take any $q | e$ such that $g_{\ueta}$ and $g_{\ueta'}$ are $q$-good. Then $\ueta=(\eta_0, \ldots, \eta_{de-1})\in {\cal MP}_{r,de}$, and $\eta_u \neq \emptyset$ only if $q$ divides $u$. Furthermore, if $g_{\ueta}=(z;\sa) \in G(de,1,r)$, then $g_{\ueta}^{(q)}=(z^{(q)};\sa/q) \in G(de/q,1,r/q)$ has cycle type $(\eta_0/q, \eta_q/q, \ldots , \eta_{(de/q-1)q}/q)=(\theta_0, \theta_1, \ldots , \theta_{de/q-1})$ (so that $\theta_i=\eta_{qi}/q$). Note that, since $q | e$ and $(p,e)=1$, $p$ does not divide $q$.

Since $(p,de)=1$, we have that $g_{\ueta}$ is $p$-singular if and only if $\sa \in \sym_r$ is $p$-singular, i.e. if and only if $\sa$ has at least one cycle of length divisible by $p$. Since $p$ does not divide $q$, this is equivalent to $\sa/q$ having at least one cycle of length divisible by $p$. Hence we obtain that $g_{\ueta}$ is $p$-singular if and only $g_{\ueta}^{(q)}$ is $p$-singular. Similarly, $g_{\ueta'}$ is $p$-singular if and only $g_{\ueta'}^{(q)}$ is $p$-singular.

By (\ref{eq:Ichapeau}), if $\widehat{I}(x,x') \neq 0$, then there exists $q$ dividing $e$ such that $g_{\ueta}$ and $g_{\ueta'}$ are $q$-good, and $\widehat{J}_{q,\uga,\uga',\uw} (g_{\ueta}^{(q)},g_{\ueta'}^{(q)})\neq 0$. Since $J_{q,\uga,\uga',\uw} $ is a perfect isometry (by \cite[Theorem 5.4]{BrGr3}), this implies that $g_{\ueta}^{(q)}$ and $g_{\ueta'}^{(q)}$ are both $p$-regular or both $p$-singular, which, by the above, shows that $g_{\ueta}$ and $g_{\ueta'}$, and thus $x$ and $x'$, are both $p$-regular or both $p$-singular. Hence Property (2) of Definition \ref{def:perfectisometry} holds.

\medskip
\noindent
It remains to show that Property (1) holds. First note that, since $|G|/|N|=e$ is coprime to $p$, we have $|C_N(x)|_p=|C_G(x)|_p=|C_G(g_{\ueta})|_p$. Similarly, we have $|C_{N'}(x')|_p=|C_{G'}(g_{\ueta'})|_p$.

Now $|C_{G(de,1,r)}(g_{\ueta})|=\dis \prod_{i,k} \eta_i^{\sharp k}! \, (kde)^{\eta_i^{\sharp k}}$, where $\eta_i^{\sharp k}$ is the number of $k$-cycles in $\eta_i$ (see \cite[Lemma 4.2.10]{James-Kerber}). Since all the cycles in any $\eta_i$ have length divisible by $q$, and since $\eta_u \neq \emptyset$ only if $q$ divides $u$, this can be rewritten as
$$|C_{G(de,1,r)}(g_{\ueta})|=\dis \prod_{i,k} \eta_{qi}^{\sharp qk}! \, (qkde)^{\eta_{qi}^{\sharp qk}}.$$
However, by definition of the cycle type $(\theta_0, \theta_1, \ldots , \theta_{de/q-1})$ of $g_{\ueta}^{(q)}$, we have $\eta_{qi}^{\sharp qk}= (\eta_{qi}/q)^{\sharp k} = \theta_{i}^{\sharp k}$ for all $i$ and $k$. Thus we obtain
$$\begin{array}{rcl} |C_{G(de,1,r)}(g_{\ueta})| & = & \dis \prod_{i,k} \theta_{i}^{\sharp k}! \, (qkde)^{\theta_{i}^{\sharp k}} 
\\ & = & \dis \prod_{i,k} \theta_{i}^{\sharp k}! \, (q^2kde/q)^{\theta_{i}^{\sharp k}} 
\\ & = & \dis q^{2 \sum_{i,k} \theta_{i}^{\sharp k}} \prod_{i,k} \theta_{i}^{\sharp k}! \, (kde/q)^{\theta_{i}^{\sharp k}} 
\\ & = & \dis q^{2s} |C_{G(de/q,1,r/q)}(g_{\ueta}^{(q)})| \qquad \mbox{(where $\sa=\sa_1 \cdots \sa_s$)}
\end{array}$$  
In particular, since $(p,q)=1$, we obtain
$$|C_N(x)|_p= |C_{G(de,1,r)}(g_{\ueta})|_p=|C_{G(de/q,1,r/q)}(g_{\ueta}^{(q)})| _p.$$
Similarly, we have
$|C_{N'}(x')|_p= |C_{G(de,1,r')}(g_{\ueta'})|_p=|C_{G(de/q,1,r'/q)}(g_{\ueta'}^{(q)})| _p$.

\noindent
From these, and from (\ref{eq:Ichapeau}), we obtain
$$e \dis \frac{\widehat{I}(x,x')}{|C_{N}(x)|_p} =\dis \sum_{q | e \atop{g_{\ueta}, g_{\ueta'} \atop \mbox{\tiny{$q$-good}}}} q^s q^{s'}\sum_{0 \leq k< e \atop \mbox{\tiny{ord}}_e(k)=q} \overline{\e^k(g)} \e^{k'}(g') \frac{ \widehat{J}_{q,\uga,\uga',\uw} (g_{\ueta}^{(q)},g_{\ueta'}^{(q)})}{|C_{G(de/q,1,r/q)}(g_{\ueta}^{(q)})| _p}.$$
Since, for all $q$ dividing $e$, $J_{q,\uga,\uga',\uw} $ is a perfect isometry (by \cite[Theorem 5.4]{BrGr3}), we have $ \widehat{J}_{q,\uga,\uga',\uw} (g_{\ueta}^{(q)},g_{\ueta'}^{(q)}) \in |C_{G(de/q,1,r/q)}(g_{\ueta}^{(q)})| _p  {\cal R}$. Furthermore, the ring ${\cal R}$ contains the integers $q^s q^{s'}$, and the roots of unity $\overline{\e^k(g)} \e^{k'}(g')$. Hence $e \widehat{I}(x,x') \in |C_{N}(x)|_p {\cal R}$. Finally, since $(p,e)=1$, we obtain $\widehat{I}(x,x') \in |C_{N}(x)|_p {\cal R}$, as claimed.

\noindent
A similar argument shows that $\widehat{I}(x,x') \in |C_{N'}(x')|_p {\cal R}$, whence Property (1) of Definition \ref{def:perfectisometry} holds. This concludes the proof.

\end{proof}

\bigskip

{\bf Acknowledgements.} 
Part of this work was done at the CIRM in Luminy 
during a research in pairs stay. The authors wish to thank the CIRM gratefully
for their financial and logistical support. The first author is
supported by Agence Nationale de la Recherche Projet GeRepMod
ANR-16-CE40-00010-01.
The second author also acknowledges financial support from the
Engineering and Physical Sciences Research Council grant \emph{Combinatorial Representation Theory} EP/M019292/1. 

 \bibliographystyle{abbrv}
\bibliography{references}

\end{document}